\newtheorem{theorem}{Theorem}[section]
\newtheorem{definition}[theorem]{Definition}
\newtheorem{lemma}[theorem]{Lemma}
\newtheorem{remark}[theorem]{Remark}
\newtheorem{proposition}[theorem]{Proposition}
\newtheorem{cor}[theorem]{Corollary}
\newtheorem{corollary}[theorem]{Corollary}
\newtheorem{example}[theorem]{Example}
\newtheorem{question}[theorem]{Question}
\newtheorem{conjecture}[theorem]{Conjecture}
\def\Cee{{\mathbb C}}
\def\En{{\mathbb N}}
\def\Ree{{\mathbb R}}
\def\Tee{{\mathbb T}}
\def\Zee{{\mathbb Z}}
\def\fH{{\mathcal H}}
\def\fL{{\mathcal L}}
\def\fU{{\mathcal U}}
\def\alp{\alpha}
\def\del{\delta}
\def\eps{\varepsilon}
\def\lam{\lambda}
\def\ome{\omega}
\def\sig{\sigma}
\def\vphi{\varphi}
\def\supp{\mathrm{supp}}
\def\comp{\raisebox{.2ex}{${\scriptstyle\circ}$}}
\def\cross{\negmedspace\times\negmedspace}
\def\setdif{\setminus}
\def\to{\rightarrow}
\def\dpair#1#2{\left\langle #1, #2 \right\rangle}
\def\Im{\textrm{Im}\,}
\def\norm#1{\left\|{#1}\right\|}
\def\Re{\textrm{Re}\,}
\def\til#1{\tilde{#1}}
\def\wbar#1{\overline{#1}}
\def\what#1{\widehat{#1}}
\def\aand{\text{ and }}
\def\ffor{\text{ for }}
\def\iif{\text{ if }}
\def\iin{\text{ in }}
\def\wwhere{\text{ where }}
\def\spn{\mathrm{span}}
\def\re{\mathrm{Re}}
\def\endpf{{\hfill$\square$\medskip}}
\def\a{\mathfrak a}
\def\k{\mathfrak k}
\def\u{\mathfrak u}
\def\z{\mathfrak z}
\def\C{\mathbb C}
\def\R{\mathbb R}
\def\I{\mathbb I}
\def\N{\mathbb N}
\def\al{\alpha}
\def\be{\beta}
\def\DE{\Delta}
\def\de{\delta}
\def\rh{\rho}
\def\et{\eta}
\def\ga{\gamma}
\def\GA{\Gamma}
\def\ve{\varepsilon}
\def\LA{\Lambda}
\def\la{\lambda}
\def\OM{\Omega}
\def\om{\omega}
\def\va{\varphi}
\def\ta{\tau}
\def\sp#1#2{\langle{#1},{#2}\rangle}
\def\cc{\mathfrak{c}}
\def\c{\mathfrak{c}}
\def\g{\mathfrak{g}}
\def\gg{\mathfrak{g}}
\def\a{\mathfrak{a}}
\def\b{\mathfrak{b}}
\def\h{\mathfrak{h}}
\def\k{\mathfrak{k}}
\def\q{\mathfrak{q}}
\def\p{\mathfrak{p}}
\def\n{\mathfrak{n}}
\def\m{\mathfrak{m}}
\def\j{\mathfrak{j}}
\def\s{\mathfrak{s}}
\def\t{\mathfrak{t}}
\def\z{\mathfrak{z}}
\def\u{\mathfrak{u}}
\def\r{\mathfrak{r}}
\def\ga{\gamma}
\def\la{\lambda}
\def\ve{\varepsilon}
\def\si{\sigma}
\def\om{\omega}
\def\ep{\epsilon}
\def\vp{\varphi}
\def\ph{\phi}
\def\ch{\chi}
\def\ta{\tau}
\def\ps{\psi}
\def\C{\mathbb{C}}
\def\ad{\rm{\, ad\, }}
\def\Ad{\rm{\, Ad \,}}
\def\Om{\Omega}
\def\PH{\Phi}
\def\ol#1{\overline{#1}}
\def\nn{\nonumber}
\def\Re{{\mathbb R}}
\def\R{{\mathbb R}}
\def\C{{\mathbb C}}
\def\N{{\mathbb N}}
\def\Q{{\mathbb Q}}
\def\Z{{\mathbb Z}}
\def\T{{\mathbb T}}
\def\I{{\mathbb I}}
\def\wh#1{\widehat{#1}}
  \def\Id{{\mathbb I}}
\def\A{{\mathcal A}}
\def\B{{\mathcal B}}
\def\D{{\mathcal D}}
\def\F{{\mathcal F}}
\def\E{{\mathcal E}}
\def\H{{\mathcal H}}
\def\K{{\mathcal K}}
\def\M{{\mathcal M}}
\def\RR{{\mathcal R}}
\def\CC{{\mathcal C}}
\def\S{{\mathcal S}}
\def\X{{\mathcal X}}
\def\U{{\mathcal U}}
\def\V{{\mathcal V}}
\def\W{{\mathcal W}}
\def\ad{{\text ad}}
\def\tr{{\mathrm Tr}}
\def\iy{\infty}
\def\noin{\noindent}
\def\ol#1{\overline{#1}}
\def\ul#1{\underline{#1}}
\def\cds#1#2{#1,\cdots,#2}
\def\hb#1{\hbox{#1}}
\def\val#1{\vert #1\vert}
\def\no#1{\Vert #1\Vert }
\def\noop#1{\Vert #1\Vert_{\rm op}}
\def\noin#1{\Vert #1\Vert_{\text{ind}\rh}}
\def\pa#1{\{#1\}}
\def\ind#1#2{\hb{ind}_{#1}^{#2}}
\def\CC#1{ C_c(#1)}
\def\supp#1{\text{supp}(#1)}
\def\res#1{_{\vert #1}}
\def\inv{^{-1}}
\def\es{\emptyset}
\def\vs #1#2#3{\vskip #1#2#3 cm{\n}}
\def\hs #1#2#3{\hskip #1#2#3 cm}
\def\me{\medskip\noindent}
\def\hb #1{\hbox{#1}}
\def\un#1{\underline{#1}}
\def\cds{\cdots}
\def\hb#1{\hbox{#1}}
\def\val#1{\vert #1\vert}
\def\pa#1{\{#1\}}
\def\im#1{\hb{\mathrm{im}}(#1)}
\def\sp#1#2{\langle #1,#2\rangle }
\def\ca#1{{\mathcal #1}}
\def\Log#1{\rm{Log}(#1)}
\def\ele{\'el\'ement}
\def\HS{{\mathcal H\mathcal S}}
\def\L#1#2{L^#1(\R^{#2})}
\def\l#1#2{L^{#1}{(#2)}}
\def\Im{\mathrm{\, Im \,}}
\def\stacksx{\tilde\xi}
\def\stacksf{\stackrel{\sim}{f}}
\def\xiphi{\xi\otimes^p_{\rho}\phi}
\def\etapsi{\eta\otimes^{p'}_{\rho}\psi}
\def\pl{\,\parallel\;}
\def\lef({\left(}
\def\rig){\right)}
\def\lan{\langle}
\def\ran{\rangle}
\def\falg{A(G)}
\def\falomeg{A_\ome(G)}
\def\falome#1{A_\ome(#1)}
\def\falbg#1{A_{#1}(G)}  
\def\falomegn{A_{\ome_{G/N}}(G/N)}
\def\falomehh{A_{\ome_H}(H)}
\def\trig{\mathrm{Trig}(G)}
\def\tripig{\mathrm{Trig}_\pi(G)}
\def\etrig{\mathrm{Trig}(\!(G)\!)}  
\def\Tr{\mathrm{Tr}}
\def\fomenorm#1{\left\|#1\right\|_{A_\ome}}
\def\dfomehnorm#1{\left\|#1\right\|_{A_{\ome_H}^*}}
\def\dfomenorm#1{\left\|#1\right\|_{A_\ome^*}}
\def\opnorm#1{\left\|#1\right\|_{\mathrm{op}}}
\newtheorem{fomeisalg}{Proposition}[section]
\def\Cee{{\mathbb C}}
\def\En{{\mathbb N}}
\def\Ree{{\mathbb R}}
\def\Tee{{\mathbb T}}
\def\Zee{{\mathbb Z}}
\def\fH{{\mathcal H}}
\def\fL{{\mathcal L}}
\def\fU{{\mathcal U}}
\def\gg{{\mathfrak g}}
\def\gh{{\mathfrak h}}
\def\gu{{\mathfrak u}}
\def\alp{\alpha}
\def\del{\delta}
\def\eps{\varepsilon}
\def\lam{\lambda}
\def\ome{\omega}
\def\sig{\sigma}
\def\vphi{\varphi}
\def\supp{\mathrm{supp}}
\def\comp{\raisebox{.2ex}{${\scriptstyle\circ}$}}
\def\cross{\negmedspace\times\negmedspace}
\def\setdif{\setminus}
\def\to{\rightarrow}
\def\dpair#1#2{\left\langle #1, #2 \right\rangle}
\def\Im{\textrm{Im}\,}
\def\norm#1{\left\|{#1}\right\|}
\def\Re{\textrm{Re}\,}
\def\til#1{\tilde{#1}}
\def\wbar#1{\overline{#1}}
\def\what#1{\widehat{#1}}
\def\aand{\text{ and }}
\def\ffor{\text{ for }}
\def\iif{\text{ if }}
\def\iin{\text{ in }}
\def\wwhere{\text{ where }}
\def\spn{\mathrm{span}}
\def\falg{A(G)}
\def\trig{\mathrm{Trig}(G)}
\def\trih{\mathrm{Trig}(H)}
\def\tri#1{\mathrm{Trig}(#1)} 
\def\tripig{\mathrm{Trig}_\pi(G)}
\def\etrig{\mathrm{Trig}(\!(G)\!)}  
\def\Tr{\mathrm{Tr}}
\def\t{\mathrm{t}}
\def\unitary#1{\mathrm{U}(#1)}
\title[Beurling-Fourier algebras]
{Beurling-Fourier algebras on compact groups:  spectral theory
}
\author{Jean Ludwig, Nico Spronk and  Lyudmila Turowska}
\begin{document}

\begin{abstract}
For a compact group $G$ we define the Beurling-Fourier algebra $A_\omega(G)$ on $G$ for weights $\omega:\what G\to{\mathbb R}^{>0}$. The classical Fourier algebra corresponds to the case $\omega$ is the constant weight $1$. We study the Gelfand spectrum of the algebra realizing it as a subset of the complexification $G_{\mathbb C}$ defined by McKennon and Cartwright and McMullen. In many cases, such as for polynomial weights, the spectrum is simply $G$.  We discuss the questions when the algebra $A_\omega(G)$ is symmetric and regular. We also obtain various results concerning spectral synthesis for $A_\omega(G)$.  
\end{abstract}

\maketitle

\footnote{

2000 {\it Mathematics Subject Classification.} Primary 43A30, 43A77, 43A45;
Secondary 46M20, 22E30.
{\it Key words and phrases.} Beurling-Fourier algebra,
spectral synthesis.

Research of the second named author supported by NSERC Grant 312515-05.
Research of the third named author supported by the Swedish Research Council and the NordForsk Research Network “Operator Algebras and Dynamics” grant 11580.}

\section{Introduction} 
Let $G$ be a compact abelian group with discrete dual group $\what{G}$.  A weight is a function $\ome:\what{G}\to\Ree^{>0}$ for which $\ome(\sig\tau)\leq\ome(\sig)\ome(\tau)$ for $\sig,\tau\iin \what{G}$.  Given such an $\ome$, the {\it Beurling algebra} on $\what{G}$ is given by
\[
\ell^1_\ome(\what{G})=\left\{\bigl(f(\sig)\bigr)_{\sig\in\what{G}}\subset\Cee:\sum_{\sig\in\what{G}}|f(\sig)|\ome(\sig)<\infty\right\}
\]
and is easily verified to be a commutative Banach algebra under convolution.  We say $\ome$ is bounded if $\inf_{\sig\in\what{G}}\ome(\sig)>0$.  In this case $\ell^1_\ome(\what{G})$ is a subalgebra of the group algebra $\ell^1(\what{G})$.  In particular we can apply the Fourier transform to obtain an algebra $A_\ome(G)$ of continuous functions on $G$.  Beurling algebras been studied by several people, e.g. Domar \cite{domar}, Reiter \cite{reiter}.  If $\omega\equiv 1$ we get the classical Fourier algebra $A(G)$, i.e.\ the space of transforms of $\ell^1(G)$.

For any locally compact group $G$ the Fourier algebra was defined by Eymard \cite{eymard} as the algebra of matrix coefficients of the left regular representation.  In the case that $G$ is compact it is well-known that $A(G)$ can be identified with the space of operator fields indexed over the set irreducible representations:
\[
\left\{\bigl(f(\pi)\bigr)_{\pi\in\what{G}}\in\prod_{\pi\in\what{G}}\fL(\fH_\pi):\sum_{\pi\in\what{G}}d_\pi\norm{f(\pi)}_1<\infty\right\}.
\]
where $\fL(\fH_\pi)$ is the space of linear operators on the Hilbertian representation space $\fH_\pi$, and $\norm{\cdot}_1$ is the trace norm.  In light of the definition of Beurling algebras, above, it is natural to define a weight on $\what{G}$ as a function $\ome:\what{G}\to\Ree^{>0}$ which satisfies $\ome(\sig)\leq\ome(\pi)\ome(\pi')$, whenver $\sig$ may be realised as a subrepresentation of $\pi\otimes\pi'$.  Thus it is natural to define the {\it Beurling-Fourier algebra} $A_\ome(G)$ so it may be identified with the space of operator fields
\[
\left\{\bigl(f(\pi)\bigr)_{\pi\in\what{G}}\in\prod_{\pi\in\what{G}}\fL(\fH_\pi):\sum_{\pi\in\what{G}}\norm{f(\pi)}_1d_\pi\ome(\pi)<\infty\right\}.
\]
We show that this definition always provides a semi-simple commutative Banach algebra; moreover, when $\ome$ is bounded --- i.e.\ $\inf_{\pi\in\what{G}}\ome(\pi)>0$ --- this is a subalgebra of the Fourier algebra.

To describe the spectrum of $A_\omega (G)$ we require an abstract Lie theory which is built from Krein-Tanaka duality and was formalised separately by McKennon \cite{mckennon} and Cartwright and McMullen \cite{cartwrightm} in the 70s. This Lie theory allowed to develop the complexification $G_{\mathbb C}$ even for non-Lie groups $G$. The Gelfand spectrum of $A_\omega(G)$ is shown to be a subset of $G_{\mathbb C}$.
In contrast to the Fourier algebra $A(G)$ for which the spectrum is $G$, $A_\omega(G)$ can have a larger spectrum $G_\omega$. Examples of such weights and groups $G$ are given in Section~\ref{spectrum_section}.
We explore conditions for which $G_\omega=G$.
In Section~\ref{spectrum_section} we prove that for symmetric weights $\omega$ the equality holds if and only if the algebra $A_\omega(G)$ is symmetric. 
In Section~\ref{spectrum_section} we define the notion of exponential growth for a weight $\omega$ and showed that $G_\omega=G$ if $\omega$ is of non-exponential type. 
Examples of weights of non-exponential growth are polynomial ones defined in Section~\ref{polynomial}. For such weights we could introduce a smooth functional calculus and use this to show that $A_\omega(G)$ is a regular algebra. This gives us a possibility to study the property of spectral synthesis. Adapting arguments from \cite{Lu-Tu} on the Fourier algebra of compact Lie groups we  prove that if $E$ is a compact subset of a Lie group $G$, then $E$ is a set of weak synthesis if it is of smooth synthesis. Moreover we give an estimate of the corresponding nilpotency degree in terms of the degree of the polynomial weight $\omega$.
As a consequence we obtain conditions for a one-point set to be a set of spectral synthesis for $A_\omega(G)$. Finally in the last subsection we study a connection between spectral synthesis and operator synthesis in the spirit of \cite{Ni-Lu}.

We note that Lee and Samei in \cite{lee-samei} suggest a more general approach to the notion of weight on $\widehat G$. Their central weights for compact groups turn out to coincide with our notion of weight. However in their paper they mainly study the properties of operator amenability and Arens regularity.

\section{An abstract Lie theory for compact groups}\label{sec:abslietheory}

In this section we remark on some consequences of the Krein-Tannaka duality
theory for compact groups which allow us to define a ``complexification'' $G_\Cee$
for any compact group $G$.  This object will be necessary for us to develop
a description of the spectrum of general Beurling-Fourier algebras.
The theory in this section was thoughourghly developed by McKennon
\cite{mckennon} and Cartwright and McMullen \cite{cartwrightm}.  We shall be requiring it
to an extent that a summary is warranted.  

Let $G$ be a compact group with dual object $\what{G}$, which, by mild
abuse of notation, we treat as
a set of unitary irreducible representations: $\pi:G\to\fU(\fH_\pi)$. We let
$d_\pi=\dim{\fH_\pi}$.  For representations
$\sig,\pi$ of $G$, we will use the notation $\sig\subset\pi$ to denote that
$\sig$ is unitarily equivalent to a subrepresentation of $\pi$.

We let $\trig$ denote the space of trigonometric polynomials, i.e.\ the
span of matrix coefficients of elements of $\what{G}$, which is well-known
to be an algebra of functions under pointwise operations.
We note that
$\trig=\bigoplus_{\pi\in\what{G}}\tripig$, where $\tripig$ is the span
of matrix coefficients of $\pi$.  For $u\iin\trig$, we let
\[
\hat{u}(\pi)=\int_G u(s)\pi(s^{-1})ds
\]
which may be understood to be an element
of the space of linear operators $\fL(\fH_\pi)$.  We caution the reader that
our notation differs from that in \cite[(28.34)]{hewittrII}.
We shall make an identification
between two linear dual space $\trig^\dagger$ and the product
$\prod_{\pi\in\what{G}}\fL(\fH_\pi)$ via
\begin{equation}\label{eq:trigdual1}
\dpair{u}{(T_\pi)_{\pi\in\what{G}}}
=\sum_{\pi\in\what{G}}d_\pi\Tr(\hat{u}(\pi)T_\pi).
\end{equation}
It follows from the orthogonality relations between matrix coefficients that for a matrix
coefficient
$\pi_{\xi,\eta}(s)=(\pi(s)\eta,\xi)$
$$\langle \pi_{\xi,\eta},(T_\pi)_{\pi\in \wh G}\rangle=(T_\pi\eta,\xi).$$
In the notation of (\ref{eq:trigdual1}), we will write for any $T\iin\trig^\dagger$
and $\pi\in\wh{G}$
\begin{equation}\label{eq:trigdual2}
\pi(T)=T_\pi\iin\fL(\fH_\pi).
\end{equation}
In particular, we identify $G$, qua evaluation functionals on $\trig$, with
$\{(\pi(s))_{\pi\in\what{G}}:s\in G\}$.  Thus we gain the Fourier inversion
\begin{equation}\label{eq:trigdualG}
\dpair{u}{s}=\sum_{\pi\in\what{G}}d_\pi\Tr\bigl(\hat{u}(\pi)\pi(s)\bigr)=u(s).
\end{equation}
Moreover, by \cite[(30.5)]{hewittrII}, for example
\begin{equation}\label{eq:unitarypart}
G\simeq\{T\in\trig^\dagger:
\pi(T)\in\fU(\fH_\pi)\text{ for each }\pi\iin\what{G}\}.
\end{equation}

We note that $\trig^\dagger\simeq \prod_{\pi\in\what{G}}\fL(\fH_\pi)$ has an obvious
product and involution which respects the formulas $\pi(TT')=\pi(T)\pi(T')$ and
$\pi(T^*)=\pi(T)^*$ for $\pi\iin\what{G}$ and $T,T'\iin\trig^\dagger$.  Moreover
the action of $\trig^\dagger$ on $\trig$ given by $\what{T\cdot u}(\pi)
=\pi(T)\hat{u}(\pi)$ for $\pi\in\what{G}$ satisfies 
$\dpair{T'}{T\cdot u}=\dpair{T'T}{u}$.  
We note that the involution satisfies
$\dpair{T^*}{u}=\overline{\dpair{T}{u^*}}$, where $u^*(s)=\wbar{u(s^{-1})}$.  

With the notation above we define
\begin{align}
G_\Cee&=\left\{\theta\in\trig^\dagger:\dpair{\theta}{uu'}=
\dpair{\theta}{u}\dpair{\theta}{u'}\ffor u,u'\iin\trig\right\} \notag \\
&=\left\{\theta\in\trig^\dagger:\theta\cdot(uu')=(\theta\cdot u)(\theta\cdot u')
\ffor u,u'\iin\trig\right\} \label{eq:GCmodule} \\
\gg_\Cee&=\left\{X\iin\trig^\dagger:
\begin{matrix} \dpair{X}{uu'}=\dpair{X}{u}u'(e)+u(e)\dpair{X}{u'} \\
\text{for all }u,u\iin\trig\end{matrix}\right\} \notag \\
&=\left\{X\iin\trig^\dagger:
\begin{matrix}X\cdot(uu')=(X\cdot u)u'+u(X\cdot u') \label{eq:gLiemodule} \\
\text{for all }u,u\iin\trig\end{matrix}\right\} \\
\aand  \gg&=\{X\in\gg_\Cee:X^*=-X\} \notag
\end{align}
where the equivalent descriptions (\ref{eq:GCmodule}) and (\ref{eq:gLiemodule}),
of $G_\Cee$ and $\gg_\Cee$ respectively, can be checked by straightforward calculation.
We observe that it is immediate from (\ref{eq:GCmodule}) that $G_\Cee$
is closed under the product in $\trig^\dagger$.  It is a standard fact, see
\cite[(30.26)]{hewittrII} for example, that $G_\Cee$ is closed under inversion and hence a group.
Moreover, from
(\ref{eq:gLiemodule}) it is immediate that $\gg_\Cee$ is a complex Lie algebra
under the usual associative Lie bracket: $[X,X']=XX'-X'X$.  In particular
$\gg$ is a real Lie subalgebra.  It is obvious that
$\trig^\dagger\simeq\prod_{\pi\in\what{G}}\fL(\fH_\pi)$ is closed under analytic 
functional calculus.  Thus by standard calculation 
we find that for $X\iin\trig^\dagger$,
$X\in\gg_\Cee$ if and only if $\exp(tX)\in G_\Cee$ for each $t\iin\Ree$;
see \cite[Prop.\ 3]{cartwrightm} (or see comment after (\ref{eq:lielike}))
for one direction, and differentiate 
$t\mapsto\exp(tX)\cdot (uu')$ to see the other.  
Moreover, by further employing (\ref{eq:unitarypart}), we see for $X\iin\trig^\dagger$ that
$X\in\gg$ if and only if $\exp(tX)\in G$ for each $t\iin\Ree$.

We record some of the basic properties of the group $G_\Cee$ and the
Lie algebras $\gg_\Cee$ and $\gg$.

\begin{proposition}\label{prop:GCproperties}
{\bf (i)} $G_\Cee$ admits polar decomposition:  each $\theta\iin G_\Cee$ can be
written uniquely as $\theta=s|\theta|$, i.e.\ $\pi(\theta)=\pi(s)|\pi(\theta)|$ for each
$\pi\iin\what{G}$.  Hence, each such $|\theta|$ is an element of $G_\Cee$.

{\bf (ii)} If $\theta\in G_\Cee$, then $\theta^*\in G_\Cee$ too.
If $\theta\in G_\Cee^+=\{T\in G_\Cee:\pi(T)\geq 0\text{ for }\pi\in\what{G}\}$, 
then for each $z\in \Cee$,
$\theta^z\in G_\Cee$ too; moreover if $t\in\Ree^{\geq 0}$, then $\theta^t\in G_\Cee^+$.

{\bf (iii)} If the connected component of the identity $G_e$ is a Lie group, then
$\gg$ is isomorphic to the usual Lie algebra of $G$ and $\gg_\Cee$ is its complexification.

{\bf (iv)} We have $\exp(\gg)\subset G_e$ and is dense, and $\exp(i\gg)=G_\Cee^+$.
The map $(s,X)\mapsto s\exp(iX):G\times\gg\to G_\Cee$ is a homeomorphism, where
$\gg$ and $G_\Cee$ have relativised topologies as subsets of $\trig^\dagger$
whose topology is the weak topology induced by (\ref{eq:trigdual1}).
\end{proposition}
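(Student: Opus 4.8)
The plan is to derive all four parts from a single reformulation of membership in $G_\Cee$. First I would expand the defining relation $\dpair{\theta}{uu'}=\dpair{\theta}{u}\dpair{\theta}{u'}$ on pairs of matrix coefficients $u=\pi_{\xi,\eta}$, $u'=\pi'_{\xi',\eta'}$ --- using $\dpair{\theta}{\pi_{\xi,\eta}}=(\pi(\theta)\eta,\xi)$ and the fact that $\pi_{\xi,\eta}\pi'_{\xi',\eta'}$ is a matrix coefficient of $\pi\otimes\pi'$ --- to show that $\theta\in G_\Cee$ is equivalent to
\[
\pi(\theta)\otimes\pi'(\theta)=U^*\Bigl(\bigoplus_j\sig_j(\theta)\Bigr)U
\]
for all $\pi,\pi'\in\what G$ and every unitary intertwiner $U\colon\fH_\pi\otimes\fH_{\pi'}\to\bigoplus_j\fH_{\sig_j}$ realizing a decomposition $\pi\otimes\pi'\cong\bigoplus_j\sig_j$ into irreducibles (both sides of the original relation being bilinear in $(u,u')$, matrix coefficients suffice). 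Since $G_\Cee$ is a group, every $\pi(\theta)$ with $\theta\in G_\Cee$ is invertible, so fibrewise polar decompositions and complex powers are available.

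\emph{Parts (ii) and (i).} That $\theta^*\in G_\Cee$ is a formal computation from $\dpair{T^*}{u}=\overline{\dpair{T}{u^*}}$ and $(uu')^*=u^*u'^*$, giving $\dpair{\theta^*}{uu'}=\overline{\dpair{\theta}{u^*u'^*}}=\overline{\dpair{\theta}{u^*}\dpair{\theta}{u'^*}}=\dpair{\theta^*}{u}\dpair{\theta^*}{u'}$. For $\theta\in G_\Cee^+$ I would set $\theta^z=(\exp(z\log\pi(\theta)))_{\pi\in\what G}$ (legitimate since $\pi(\theta)>0$), apply the functional calculus $A\mapsto A^z$ to the identity above, and use $(A\otimes B)^z=A^z\otimes B^z$ for positive invertible $A,B$ and $(V^*CV)^z=V^*C^zV$ for unitary $V$ and positive invertible $C$; the identity is preserved, so $\theta^z\in G_\Cee$, and $\theta^t\in G_\Cee^+$ for $t\ge0$ since positive powers of positive operators are positive. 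Part (i) then follows: $|\theta|:=(\theta^*\theta)^{1/2}\in G_\Cee^+$ and $s:=\theta|\theta|^{-1}\in G_\Cee$ by the above together with closure under products; in each fibre $\pi(s)=\pi(\theta)|\pi(\theta)|^{-1}$ is unitary, so $s\in G$ by \eqref{eq:unitarypart}, and $s|\theta|=\theta$. Uniqueness is fibrewise uniqueness of the polar decomposition of an invertible operator, together with the fact that $T\in\trig^\dagger$ is determined by $(\pi(T))_{\pi\in\what G}$.

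\emph{Parts (iii) and (iv).} Here I would use the criterion recorded before the proposition: for $X\in\trig^\dagger$, $X\in\gg_\Cee$ iff $\exp(tX)\in G_\Cee$ for all $t\in\Ree$, and $X\in\gg$ iff $\exp(tX)\in G$ for all $t$; since $\exp$ is continuous on $\trig^\dagger$, such one-parameter curves through $e$ lie in $G_e$. For $\exp(i\gg)=G_\Cee^+$: if $X\in\gg$ then $iX$ is self-adjoint, so $\pi(\exp(iX))>0$ for every $\pi$ and $\exp(iX)\in G_\Cee$ (as $iX\in\gg_\Cee$); conversely if $\theta\in G_\Cee^+$ then $X:=-i\log\theta$ satisfies $X^*=-X$ and $\exp(tX)=\theta^{-it}\in G_\Cee$ by part (ii), whence $X\in\gg$ and $\exp(iX)=\theta$. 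For (iii), assuming $G_e$ is a Lie group: every one-parameter subgroup of $G$ lies in $G_e$ and so has the form $t\mapsto\exp_{G_e}(t\xi)$; differentiating $t\mapsto(\pi(\exp_{G_e}(t\xi)))_\pi$ in $\trig^\dagger$ gives $X\in\gg$ with $\pi(X)=d\pi(\xi)$, and $X\mapsto\xi$ is a bracket-preserving linear bijection $\gg\to\mathrm{Lie}(G_e)$ because $\pi([X,Y])=[\pi(X),\pi(Y)]=d\pi([\xi,\eta])$; the conjugation computation that gave $\theta^*\in G_\Cee$ also shows $X^*\in\gg_\Cee$ whenever $X\in\gg_\Cee$, so $\gg_\Cee=\gg\oplus i\gg$ is the complexification.

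\emph{Finishing (iv), and the obstacle.} Connectedness gives $\exp(\gg)\subseteq G_e$, and its density in $G_e$ is where I would invoke external input: $G_e$ is a projective limit of compact connected Lie groups, on each of which the exponential is onto (\cite[Prop.\ 3]{cartwrightm}). The map $\Phi(s,X)=s\exp(iX)\colon G\times\gg\to G_\Cee$ is well defined by $\exp(i\gg)=G_\Cee^+$ and the group structure, and fibrewise continuous, hence continuous for the weak topology; it is surjective by part (i), and injective because $\pi(s\exp(iX))=\pi(s)\exp(i\pi(X))$ is a genuine polar decomposition, forcing $\pi(s)=\pi(t)$ and (as $\exp$ is injective on self-adjoint operators) $\pi(X)=\pi(Y)$ for all $\pi$; its inverse $\theta\mapsto(\theta|\theta|^{-1},-i\log|\theta|)$ is continuous since $A\mapsto|A|$, $A\mapsto A|A|^{-1}$ and $\log$ are continuous on invertibles, so $\Phi$ is a homeomorphism. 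The delicate step is in part (ii), the implication $\theta\in G_\Cee\Rightarrow\theta^z\in G_\Cee$: it works precisely because the tensor--intertwiner identity is stable under $A\mapsto A^z$, which is why that identity should be isolated at the outset rather than working directly with multiplicativity on $\trig$; the only remaining non-formal ingredient is the density assertion for non-Lie $G_e$.
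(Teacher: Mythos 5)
Your proposal is correct and follows essentially the same route as the paper: your tensor--intertwiner reformulation of membership in $G_\Cee$ is precisely the paper's grouplike characterization (\ref{eq:grouplike}) combined with the coproduct formula (\ref{eq:coproduct}), and your functional-calculus argument for $\theta^*$ and $\theta^z$ is the paper's observation that multiplicative (anti-)analytic functions preserve that identity, with polar decomposition then handled fibrewise exactly as in the paper. The only difference is that you spell out parts (iii) and (iv), which the paper delegates entirely to Cartwright--McMullen and McKennon, while you still (appropriately) cite the same source for the one genuinely non-formal ingredient, the density of $\exp(\gg)$ in $G_e$.
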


\proof Part (i) and the second part of (ii) can be found in
\cite[Cor.\ 1 \& Thm.\ 2]{mckennon}.  We note that the proof can be conceptualised
a bit differently, the ideas of which we sketch below.  

It is well known that $\what{G\times G}$ is given by
Kronecker products $\{\pi\times\pi':\pi,\pi'\in\what{G}\}$, and hence
$\tri{G\cross G}\simeq\trig\otimes\trig$.  We let $m:\tri{G\cross G}\to\trig$
denote pointwise multiplication and 
\[
m^\dagger:\trig^\dagger\to\tri{G\cross G}^\dagger
\simeq \prod_{\pi,\pi'\in\what{G}}\fL(\fH_\pi\otimes\fH_{\pi'})
\]
denote its adjoint map.  For each $\pi,\pi'\iin\what{G}$ and each
$\sig\iin\what{G}$ 
there are $m(\sig,\pi\otimes\pi')$ (this number may be $0$) partial isometries 
$V_{\sig,i}:\fH_\sig\to\fH_\pi\otimes \fH_\pi$ with pairwise disjoint ranges, for which
\begin{equation}\label{eq:tensdec}
\pi\otimes\pi'
=\sum_{\sig\in\what{G}}\sum_{i=1}^{m(\sig,\pi\otimes\pi')}V_{\sig,i}\sig(\cdot)V_{\sig,i}^*
\end{equation}
where we adopt the convention that an empty sum is $0$.
Then we may calculate for $T\iin\trig^\dagger$ that
\begin{equation}\label{eq:coproduct}
\pi\times\pi'(m^\dagger T)=
\sum_{\sig\in\what{G}}\sum_{i=1}^{m(\sig,\pi\otimes\pi')}V_{\sig,i}\sig(T)V_{\sig,i}^*.
\end{equation}
It follows readily that $m^\dagger$ is a $*$-homomorphism; in fact it
is the well-known coproduct map, see \cite[Ex.\ 1.2.5]{timmermann}.  In particular,
it is easy to calculate from the definitions of $G_\Cee$ and $\gg_\Cee$ that
\begin{align}
G_\Cee&=\left\{\theta\in\trig^\dagger:\theta\otimes\theta
=m^\dagger(\theta)\right\} 
\label{eq:grouplike}\\
\gg_\Cee&=\left\{X\in\trig^\dagger:
X\otimes I+I\otimes X=m^\dagger(X) \right\} \label{eq:lielike}
\end{align}
Where $I$ is the identity element of $\trig^\dagger$.
We note that it is easy to check that exponentiating elements of
$\gg_\Cee$ is (\ref{eq:lielike}) gives elements of the form
(\ref{eq:grouplike}).

Now (i) follows from (\ref{eq:coproduct}), (\ref{eq:grouplike}),
(\ref{eq:unitarypart}) and the uniqueness 
of polar decomposition.  Any multiplicative analytic function 
$\psi:\Ree^{>0}\to\Cee^{\not=0}$,
(respectively, multiplicative anti-analytic function $\psi:\Cee\to\Cee$) 
will thus satisfy 
\[
\psi(\theta)\otimes\psi(\theta)=\psi(\theta\otimes I\, I\otimes\theta)
=\psi(\theta\otimes\theta)=\psi(m^\dagger (\theta))=m^\dagger(\psi(\theta))
\]
for $\theta\in G_\Cee^+$ (respectively, $\theta\in G_\Cee$). 
Hence, again by (\ref{eq:grouplike}), $\psi(\theta)\in G_\Cee$; moreover
$\psi(\theta)\in G_\Cee^+$ if $\psi(\Ree^{>0})\subset\Ree^{>0}$.  Thus
we obtain (ii).

Part (iii) is \cite[Cor.\ 4]{cartwrightm}, while part (iv) is
\cite[Prop.\ 4]{cartwrightm} (see also \cite[Thm.\ 3]{mckennon}.  \endpf

If $H$ is another compact group, and $\sig:G\to H$ is a continuous homomorphism
then $\sig$ induces a $*$-homomorphism $\sig:\trig^\dagger\to\tri{H}^\dagger$.
Indeed, if we assign $\trig^\dagger\simeq\prod_{\pi\in\what{G}}\fL(\fH_\pi)$
the linear topology from the dual pairing (\ref{eq:trigdual1}) then
$\spn G$ is dense in $\trig^\dagger$, since $\spn\pi(G)=\fL(\fH_\pi)$
for each $\pi$ by Schur's lemma.  The map $\sum_{j=1}^n\alp_js_j\mapsto
\sum_{j=1}^n\alp_j\sig(s_j):\spn G\to\spn H$ is well defined, being the relative
adjoint of $u\mapsto u\comp\sig:\trig\to\tri{H}$, and is clearly a $*$-homomorphism.  
Hence this map extends as claimed.  It follows that $\sig|_{G_\Cee}
:G_\Cee\to H_\Cee$ is a homomorphism of groups, while $\sig|_{\gg_\Cee}:\gg_\Cee\to
\gh_\Cee$ is a homomorphism of $\Cee$-Lie algebras, and
$\sig|_{\gg}:\gg\to\gh$ is a homomorphism of $\Ree$-Lie algebras, where
$\gh$ is the Lie algebra of $H$.  Moreover it is immediate that
$\exp(\sig(X))=\sig(\exp(X))$ for $X\iin\gg_\Cee$.  Finally, if
$K$ is a third compact group and $\tau:H\to K$ is a continuous homomorphism,
then $\tau\comp\sig:G\to K$ extends to a $*$-homomorphism
$\tau\comp\sig:\trig^\dagger\to\tri{K}^\dagger$ and restricts
to a group, respectively Lie algebra, homomorphism where appropriate.

If $\pi\in\what{G}$, we  denote  the linear dual space of $\fH_\pi$ by $\fH_{\bar{\pi}}$.
For $A\iin\fL(\fH_\pi)$, let $A^\t\iin\fL(\fH_{\bar{\pi}})$ denote its linear
adjoint.  For $s\in G$ we define  $\bar{\pi}(s)=\pi(s^{-1})^t$. Then $\bar\pi$ is a unitary representation on $\fH_{\bar\pi}$ called the conjugate representation of $\pi$.

\begin{corollary}\label{prop:ggproperties1}
For any $\theta\iin G_\Cee^+$ and $\pi\iin\what{G}$ we have 
$\bar{\pi}(\theta)=\pi(\theta^{-1})^\t$.
\end{corollary}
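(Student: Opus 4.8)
The plan is to reduce this identity, which concerns the group $G_\Cee$, to an infinitesimal statement about the real Lie algebra $\gg$, exploiting the fact (Proposition~\ref{prop:GCproperties}(iv)) that $X\mapsto\exp(iX)$ maps $\gg$ onto $G_\Cee^+$. Throughout one uses that in $\trig^\dagger\simeq\prod_{\rho\in\what G}\fL(\fH_\rho)$ every algebraic operation — the product, inversion of invertibles, and analytic functional calculus, in particular $\exp$ — is computed coordinatewise, and that under the embedding $G\hookrightarrow\trig^\dagger$ an element $s$ goes to $(\rho(s))_{\rho\in\what G}$; since $\bar\pi$ is (equivalent to) a member of $\what G$, the coordinate functional $T\mapsto\bar\pi(T)=T_{\bar\pi}$ of (\ref{eq:trigdual2}) restricts on $G$ to the conjugate representation $s\mapsto\pi(s^{-1})^\t$. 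With this bookkeeping the proof has two steps.

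\textbf{Step 1} (Lie algebra level). I claim $\bar\pi(X)=-\pi(X)^\t$ for every $X\in\gg$. For $t\in\Ree$ we have $\exp(tX)\in G$ and $\exp(tX)^{-1}=\exp(-tX)$ (coordinatewise, $\pi(\exp(tX))^{-1}=\exp(t\pi(X))^{-1}=\exp(-t\pi(X))=\pi(\exp(-tX))$), so the definition of the conjugate representation gives
\[
\bar\pi(\exp(tX))=\pi\bigl(\exp(tX)^{-1}\bigr)^\t=\pi(\exp(-tX))^\t=\bigl(\exp(-t\pi(X))\bigr)^\t .
\]
The left-hand side equals $\exp\bigl(t\,\bar\pi(X)\bigr)$, again because $\exp$ in $\trig^\dagger$ is coordinatewise. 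Both sides are smooth curves in the finite-dimensional space $\fL(\fH_{\bar\pi})$; differentiating at $t=0$, and using that the transpose $\fL(\fH_\pi)\to\fL(\fH_{\bar\pi})$ is a continuous $\Cee$-linear anti-isomorphism, yields $\bar\pi(X)=-\pi(X)^\t$.

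\textbf{Step 2} (assembling). Given $\theta\in G_\Cee^+$, Proposition~\ref{prop:GCproperties}(iv) lets us write $\theta=\exp(iX)$ with $X\in\gg$, so $\theta^{-1}=\exp(-iX)$. Since the transpose commutes with $\exp$ (a linear anti-isomorphism preserves power series) and functional calculus is coordinatewise, Step~1 gives
\[
\bar\pi(\theta)=\exp\bigl(i\,\bar\pi(X)\bigr)=\exp\bigl(-i\,\pi(X)^\t\bigr)=\bigl(\exp(-i\,\pi(X))\bigr)^\t=\pi\bigl(\exp(-iX)\bigr)^\t=\pi(\theta^{-1})^\t ,
\]
which is the assertion. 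The only point needing care is the purely notational identification, made in the first paragraph, of $\bar\pi$ qua coordinate functional on $\trig^\dagger$ with $\bar\pi$ qua conjugate representation on $G$; there is no genuine obstacle. (If one wants the identity for arbitrary $\theta\in G_\Cee$, it follows by combining the above with the polar decomposition $\theta=s|\theta|$ of Proposition~\ref{prop:GCproperties}(i) and the multiplicativity $\bar\pi(TT')=\bar\pi(T)\bar\pi(T')$, since $\pi(s^{-1})^\t\pi(|\theta|^{-1})^\t=\bigl(\pi(|\theta|^{-1})\pi(s^{-1})\bigr)^\t=\pi(\theta^{-1})^\t$.)
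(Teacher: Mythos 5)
Your proof is correct and follows essentially the same route as the paper: both write $\theta=\exp(iX)$ with $X\in\gg$ via Proposition~\ref{prop:GCproperties}(iv), establish the infinitesimal identity $\bar{\pi}(X)=-\pi(X)^\t$ on $\gg$, and exponentiate. The only (cosmetic) difference is that the paper obtains the infinitesimal identity as the differential of the homomorphism $u\mapsto(u^{-1})^\t$ of unitary groups, whereas you differentiate the curve $t\mapsto\bar{\pi}(\exp(tX))$ directly.
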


\begin{proof}  Let $\unitary{\fH_\pi}$ denote the unitary group on $\fH_\pi$ and
$\gu(\fH_\pi)$ its Lie algebra, which we may regard in the classical sense by (iii)
of the proposition, above.  
Let $\gamma:\unitary{\fH_\pi}\to\unitary{\fH_{\bar{\pi}}}$ 
be given by $\gamma(u)=(u^{-1})^\t$. 
The map $U\mapsto -U^\t:\gu(\fH_\pi)\to\gu(\fH_{\bar{\pi}})$
is a Lie algebra homomorphism.  Moreover, if we write $U=iH$ where $H$ is 
hermitian we have
\[
\exp(-U^\t)=\exp(-iH^\t)=(\exp(iH)^{-1})^\t=\gamma(\exp(U)).
\]
We see that $d\gamma(U)=-U^\t$, $U\in \gu(\fH_\pi)$ and hence its unique $\Cee$-linear
extension to the complexification satisfies $d\gamma(X)=-X^\t$ for $X\in \gu(\fH_\pi)_\Cee$.

We shall regard $\pi:G\to\unitary{\fH_\pi}$
as a homomorphism, as above, so $\bar{\pi}=\gamma\comp\pi$.
If $\theta\in G_\Cee^+$, we use (iv) in the proposition above to
write $\theta=\exp(iX)$ where $X\in\gg$ (so $iX$ is hermitian).  Thus
we compute
\[
\bar{\pi}(\theta)=\exp(i\bar{\pi}(X))=\exp(i\gamma\comp\pi(X))
=\exp(-i\pi(X)^\t)=\pi(\theta^{-1})^\t
\]
as desired.  \end{proof}

\section{Beurling-Fourier algebras. Definition.}

Let $ G $ be  a compact group. A {\it weight} on $\what{G}$ is a function 
$\ome:\what{G}\to\Ree^{>0}$
such that
\[
\ome(\sig)\leq\ome(\pi)\ome(\pi')\text{ whenever }
\sig\subset\pi\otimes\pi'.
\]
We say that $\ome$ is {\it bounded} if $\inf_{\pi\in\what{G}}\ome(\pi)>0$, and
{\it symmetric} if $\ome(\bar{\pi})=\ome(\pi)$ for each $\pi\iin\what{G}$.
Note that since $1\subset\pi\otimes\bar\pi$, any symmetric weight is automatically 
bounded with $\ome(\pi)^2
=\ome(\pi)\ome(\bar{\pi})\geq \ome(1)$.  Of course, $\ome(1)\geq 1$ since
$1\otimes\pi=\pi$ for each $\pi$.

We let $\etrig=\prod_{\pi\in\what{G}}\tripig$ denote the space of formal
trigonometric series.  For $u\in\etrig$ and $\pi\iin\what{G}$, the definition 
$\hat{u}(\pi)$ can be regarded as a formal integral.
If $\ome$ is a weight on $\what{G}$, we define
\[
\falomeg=\left\{u\in\etrig:\fomenorm{u}=
\sum_{\pi\in\what{G}}\norm{\hat{u}(\pi)}_1d_\pi\ome(\pi)<\infty\right\},
\]
where, for each $\pi\iin\what{G}$, $d_\pi=\dim{\fH_\pi}$.
It is obvious that $\falomeg$ is
a Banach space with norm $\fomenorm{\cdot}$.  We call $\falomeg$ the
{\it Beurling-Fourier algebra} with weight $\ome$.  We will see that it
is a Banach algebra, below.  Note that if $\ome$ is the constant weight
$1$, then $\falbg{1}=\falg$ is the Fourier algebra of $G$.

\begin{fomeisalg}\label{prop:fomeisalg}
For any weight $\ome$, $\falomeg$ is a Banach algebra under the product 
extending pointwise multiplication on $\trig$.  
If $\ome$ is bounded, then $\falomeg\subset\falg$, and
hence is an algebra of continuous functions on $G$.
\end{fomeisalg}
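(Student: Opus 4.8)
The plan is to prove the submultiplicative inequality $\fomenorm{uv}\le\fomenorm{u}\fomenorm{v}$ for $u,v\in\trig$, to extend the product to $\falomeg$ by completeness and continuity, and then to read off the statement about bounded weights from a comparison of norms. First I would reduce the inequality to the case of single isotypic components: for $u=\sum_{\pi\in\what G}u_\pi$ with $u_\pi\in\tripig$ one has $\fomenorm{u}=\sum_{\pi\in\what G}\fomenorm{u_\pi}$ directly from the definition of the norm, so since for $u,v\in\trig$ the sum $uv=\sum_{\pi,\pi'}u_\pi v_{\pi'}$ is finite, by the triangle inequality it suffices to treat $u\in\tripig$ and $v\in\tribg{\pi'}$ for fixed $\pi,\pi'\in\what G$. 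For such $u,v$, using the Fourier inversion formula (\ref{eq:trigdualG}) we may write $u(s)=d_\pi\Tr(\hat u(\pi)\pi(s))$ and $v(s)=d_{\pi'}\Tr(\hat v(\pi')\pi'(s))$, hence $u(s)v(s)=d_\pi d_{\pi'}\Tr\bigl((\hat u(\pi)\otimes\hat v(\pi'))(\pi\otimes\pi')(s)\bigr)$; substituting the decomposition (\ref{eq:tensdec}), using cyclicity of the trace, and comparing with $u(s)v(s)=\sum_{\sig\in\what G}d_\sig\Tr(\widehat{uv}(\sig)\sig(s))$ (legitimate since the spaces $\tribg{\sig}$ are linearly independent in $\trig$ and $B\mapsto(s\mapsto\Tr(B\sig(s)))$ is injective on $\fL(\fH_\sig)$) yields
\[
\widehat{uv}(\sig)=\frac{d_\pi d_{\pi'}}{d_\sig}\sum_{i=1}^{m(\sig,\pi\otimes\pi')}V_{\sig,i}^*\bigl(\hat u(\pi)\otimes\hat v(\pi')\bigr)V_{\sig,i},
\]
which may alternatively be obtained from the fact that $m^\dagger$ is the coproduct, via (\ref{eq:coproduct}).

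Next comes the norm estimate. Putting $A=\hat u(\pi)\otimes\hat v(\pi')\in\fL(\fH_\pi\otimes\fH_{\pi'})$, so that $\norm{A}_1=\norm{\hat u(\pi)}_1\norm{\hat v(\pi')}_1$, the formula above gives
\[
\fomenorm{uv}=\sum_{\sig\in\what G}d_\sig\ome(\sig)\norm{\widehat{uv}(\sig)}_1=d_\pi d_{\pi'}\sum_{\sig\in\what G}\ome(\sig)\norm{\sum_{i}V_{\sig,i}^*AV_{\sig,i}}_1.
\]
Whenever $m(\sig,\pi\otimes\pi')>0$ we have $\sig\subset\pi\otimes\pi'$, hence $\ome(\sig)\le\ome(\pi)\ome(\pi')$ by the defining property of a weight, so this is at most $d_\pi d_{\pi'}\ome(\pi)\ome(\pi')\sum_{\sig,i}\norm{V_{\sig,i}^*AV_{\sig,i}}_1$. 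The one genuinely nonformal point is then the trace-norm inequality $\sum_{\sig,i}\norm{V_{\sig,i}^*AV_{\sig,i}}_1\le\norm{A}_1$: by (\ref{eq:tensdec}) the partial isometries $V_{\sig,i}$ have pairwise orthogonal ranges whose range projections sum (evaluating (\ref{eq:tensdec}) at the identity) to $I$ on $\fH_\pi\otimes\fH_{\pi'}$, so $W=\bigoplus_{\sig,i}V_{\sig,i}^*$ is an isometry and $V_{\sig,i}^*AV_{\sig,i}$ is the $(\sig,i)$-th diagonal block of $WAW^*$; the claim then follows from the standard fact that the sum of the trace norms of the diagonal blocks of a trace-class operator is at most its trace norm (proved by taking contractions $C_j$ nearly attaining $\norm{B_{jj}}_1=\sup\{|\Tr(B_{jj}C)|:\norm{C}_\infty\le1\}$, adjusting them by unimodular scalars, and assembling them into one block-diagonal contraction), together with $\norm{WAW^*}_1\le\norm{A}_1$. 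Combining the three estimates gives $\fomenorm{uv}\le d_\pi\ome(\pi)\norm{\hat u(\pi)}_1\cdot d_{\pi'}\ome(\pi')\norm{\hat v(\pi')}_1=\fomenorm{u}\fomenorm{v}$, and the reduction of the first paragraph upgrades this to all $u,v\in\trig$.

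Finally I would complete and deal with bounded weights. Since $\trig$ is dense in the Banach space $\falomeg$ — the partial sums $\sum_{\pi\in F}u_\pi$ over finite $F\subset\what G$ converge to $u$ in $\fomenorm{\cdot}$ because the tails of $\sum_{\pi}d_\pi\ome(\pi)\norm{\hat u(\pi)}_1$ vanish — the bounded bilinear map $(u,v)\mapsto uv$ on $\trig$ extends uniquely to a bounded bilinear map $\falomeg\times\falomeg\to\falomeg$ with $\fomenorm{uv}\le\fomenorm{u}\fomenorm{v}$, and associativity and commutativity pass to the limit; this is the asserted Banach-algebra structure. If $\ome$ is bounded, put $c=\inf_{\pi\in\what G}\ome(\pi)>0$; then $\sum_\pi d_\pi\norm{\hat u(\pi)}_1\le c^{-1}\fomenorm{u}<\infty$ for $u\in\falomeg$, so $\falomeg$ embeds continuously into $\falg$. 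Every element of $\falg$ is the uniformly convergent series $s\mapsto\sum_\pi d_\pi\Tr(\hat u(\pi)\pi(s))$, since $|\Tr(\hat u(\pi)\pi(s))|\le\norm{\hat u(\pi)}_1$, hence is continuous on $G$; and because $\trig\hookrightarrow\falomeg\hookrightarrow\falg\hookrightarrow C(G)$ are continuous and pointwise multiplication is continuous on $C(G)$, the extended product on $\falomeg$ agrees with pointwise multiplication of functions. The main obstacle is the trace-norm inequality $\sum_{\sig,i}\norm{V_{\sig,i}^*AV_{\sig,i}}_1\le\norm{A}_1$ for the diagonal-block compression of a trace-class operator; by contrast, the appearance of the Beurling condition $\ome(\sig)\le\ome(\pi)\ome(\pi')$, the density of $\trig$, and the bounded case are all routine.
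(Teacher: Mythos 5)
Your argument is correct, but it takes a genuinely different route from the paper's. The paper reduces everything to rank-one matrix coefficients $u=(\pi(\cdot)\xi,\eta)$, for which $\fomenorm{u}=\norm{\xi}\norm{\eta}\ome(\pi)$; after decomposing $\pi\otimes\pi'$ via (\ref{eq:tensdec}) the submultiplicativity for two such coefficients becomes the Cauchy--Schwarz estimate $\sum_j\norm{V_j(\xi\otimes\xi')}\,\norm{V_j(\eta\otimes\eta')}\le\norm{\xi\otimes\xi'}\norm{\eta\otimes\eta'}$, and the general case follows because every trace-class contraction is a convex combination of rank-one ones, so the $\fomenorm{\cdot}$-unit ball of $\trig$ is the convex hull of normalized rank-one coefficients. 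You instead keep whole isotypic components, derive the explicit formula $\widehat{uv}(\sig)=\frac{d_\pi d_{\pi'}}{d_\sig}\sum_i V_{\sig,i}^*\bigl(\hat u(\pi)\otimes\hat v(\pi')\bigr)V_{\sig,i}$ (which is correct, and is essentially the coproduct identity (\ref{eq:coproduct}) in disguise), and then invoke the pinching inequality $\sum_{\sig,i}\norm{V_{\sig,i}^*AV_{\sig,i}}_1\le\norm{A}_1$ for the trace norm, whose proof via block-diagonal test contractions you sketch correctly (and which is unproblematic here since each $\fH_\pi\otimes\fH_{\pi'}$ is finite-dimensional). Both proofs hinge on the same orthogonal decomposition of the tensor product and on $\ome(\sig)\le\ome(\pi)\ome(\pi')$; the paper's convexity trick reduces the operator-theoretic input to Cauchy--Schwarz for vectors, while your version requires the (standard, slightly heavier) trace-norm pinching fact but yields in exchange an explicit formula for the Fourier coefficients of a product, and your closing remarks on density, extension of the product, and the bounded case match the paper's.
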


\begin{proof}  It is clear that $\trig$ is $\fomenorm{\cdot}$-dense in
$\falomeg$, hence it suffices to verify that $\fomenorm{\cdot}$
is an algebra norm on $\trig$.  Let us first consider a pair
of basic coefficients, $u=\dpair{\pi(\cdot)\xi}{\eta}$ and
$u'=\dpair{\pi'(\cdot)\xi'}{\eta'}$, where $\pi,\pi'\in\what{G}$,
$\xi,\eta\in\fH_\pi$, and $\xi',\eta'\in\fH_{\pi'}$.  Note that by
the Schur orthogonality relations
\[
\fomenorm{\dpair{\pi(\cdot)\xi}{\eta}}=\norm{\hat{u}(\pi)}_1d_\pi\ome(\pi)=
\norm{\xi}\norm{\eta}\ome(\pi)
\]
and similar holds for any basic matrix coefficient.  
There exist,
not necessarily distinct, $\sig_1,\dots,\sig_m\iin\what{G}$ such that
$V\pi\otimes\pi'(\cdot)V^*=\bigoplus_{j=1}^m\sig_j$ for some unitary operator $V$, 
and, with those, associated
pairwise orthogonal projections onto the reducing subspaces,
$p_1,\dots,p_m$ on $V(\fH_\pi\otimes\fH_{\pi'})$.  Let for each $i$, $V_i=p_iV$
(so $V_i=V_{\sig_i,j}^*$ in the notation of (\ref{eq:tensdec}). 
Hence we have
\begin{align*}
uu'&=(\pi\otimes\pi'(\cdot)\xi\otimes\xi',\eta\otimes\eta') \\
&=\left(\bigoplus_{j=1}^m\sig_j(\cdot)\sum_{j'=1}^mV_{j'}(\xi\otimes\xi')\;,
\;\sum_{j''=1}^mV_{j''}(\eta\otimes\eta')\right) \\
&=\sum_{j=1}^m (\sig_j(\cdot)V_j(\xi\otimes\xi'),
V_j(\eta\otimes\eta'))
\end{align*}
Hence, since $\ome$ is a weight, we have
\begin{align*}
\fomenorm{uu'}
&\leq\sum_{j=1}^m\fomenorm{(\sig_j(\cdot)V_j(\xi\otimes\xi'),
V_j(\eta\otimes\eta'))} \\
&=\sum_{j=1}^m\norm{V_j(\xi\otimes\xi')}
\norm{V_j(\eta\otimes\eta')}\ome(\sig_j) \\
&\leq\left(\sum_{j=1}^m\norm{V_j(\xi\otimes\xi')}
\norm{V_j(\eta\otimes\eta')}\right)\ome(\pi)\ome(\pi') \\
&\leq\left(\sum_{j=1}^m\norm{V_j(\xi\otimes\xi')}^2\right)^{1/2}
\left(\sum_{j=1}^m\norm{V_j(\eta\otimes\eta')}^2\right)^{1/2}
\ome(\pi)\ome(\pi') \\
&=\norm{\xi\otimes\xi'}\norm{\eta\otimes\eta'}\ome(\pi)\ome(\pi')
=\fomenorm{u}\fomenorm{u'}.
\end{align*}
Since for each $\pi$, each $T\iin\fL(\fH_\pi)$ with
$\norm{T}_1\leq 1$ is in the convex hull of rank one operators,
it follows that the convex hull of
\[
\{(\pi(\cdot)\xi,\eta):\pi\in\what{G},\xi,\eta\in\fH_\pi,
\norm{\xi}\norm{\eta}=1/\ome(\pi)\}
\]
is the $\fomenorm{\cdot}$-unit ball of $\trig$.  Hence it follows
that $\fomenorm{\cdot}$ is sub-\-multi\-pli\-cative on $\trig$.

Now if $\ome$ is bounded, with $C=\inf_{\pi\in\what{G}}\ome(\pi)>0$, then
we have for $u\iin\tripig$ that $ u $ is contained in $ A(G) $ and that
\begin{eqnarray}\label{agawg}
 \nn \no{u}_{A}&=&\sum_{\pi\in\what{G}}\norm{\hat{u}(\pi)}_1d_\pi\\
  &\leq&
\sum_{\pi\in\what{G}}\norm{\hat{u}(\pi)}_1\frac{1}{C}
\om(\pi) d_\pi\\
\nn  &=&\frac{1}{C}\no{u}_{A_\om}.
\end{eqnarray}
Thus $\falomeg$ is contained in $ A(G) $ and is therefore  a space of
continuous functions.
\end{proof}
\begin{example} \rm (1)
 If $\om \equiv 1$ then $A_\om (G)=A(G)$, the Fourier algebra of $G$.

(2) If $\om(\pi)=d_\pi$, $\pi\in\wh G$ (we say that $\om$ is the dimension weight) then
$A_\om(G)=A_\gamma(G)$, an algebra 
studied by B.E. Johnson (\cite{johnson}) and which is the image of the map 
from $A(G)\otimes^\gamma A(G)$ to $A(G)$  given on  elementary tensors by
$f\otimes g\mapsto f\ast\check g$, where $\check g(t)=g(t\inv)$; or, as shown
in \cite{forrestss}, is the image of the map $f\otimes g\mapsto f\ast g$
from $A(G\times G)$ to $A(G)$.

(3) If $G=\Tee^n$ and $\om$ is a weight on $\what{\Tee^n}\simeq\Zee^n$, 
then $A_\om(G)\simeq l^1(\Zee^n,\ome)$ is a Beurling algebra (see \cite{reiter}).

\end{example}

\begin{remark}\label{fourinv}
\rm   For a bounded weight $ \om $ on a compact group $ G $ we can extend the
Fourier inversion formula (\ref{eq:trigdualG}) to elements $ u\in A_\om(G) $,  since then $ A_
\om(G)\subset A(G)$, and series from elements of the latter are summable. 
That is for $ u\in A_ \om(G)$ and $ s\in G $  we have that
 \begin{eqnarray}\label{invers_formula}
 \nn u(s)= \sum_{\pi\in\what{G}}\Tr({\pi(s) \hat{u}(\pi)})d_\pi.
\end{eqnarray}

 \end{remark}

   Let $ \rh $ denote the right  translation on $\trig$, i.e.\ 
for an element $u\iin\trig$ we let\begin{eqnarray}
 \nn \rh(t)u(s):=u(st), s,t\in G,
\end{eqnarray}
and let $\lambda$ be the left translation on $\trig$ defined by
\begin{eqnarray}
 \nn \la(t)u(s):=u(t\inv s), s,t\in G.
\end{eqnarray}

\begin{proposition}\label{transl}
For every weight $ \om $ on $ \widehat G $, 
right and left translations extend to isometries of $ A_\om(G) $.
 \end{proposition}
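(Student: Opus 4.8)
The plan is to reduce the statement to a one-line computation on the dense subalgebra $\trig$ and then to pass to the completion. First I would fix $t\in G$ and a $u\in\trig$ and compute the Fourier coefficients of the translates. Since Haar measure on the compact group $G$ is bi-invariant, the substitution $r=st$ gives
\[
\widehat{\rh(t)u}(\pi)=\int_G u(st)\,\pi(s\inv)\,ds=\pi(t)\,\hat u(\pi),
\]
and the substitution $r=t\inv s$ gives $\widehat{\la(t)u}(\pi)=\hat u(\pi)\,\pi(t)\inv$, for every $\pi\in\what G$. In particular $\rh(t)$ and $\la(t)$ carry each $\tripig$ into itself, hence carry $\trig$ into itself, so these formulas are meaningful.

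Next I would use that $\pi(t)$ is unitary for each $\pi$: left or right multiplication by a unitary is a trace-norm isometry of $\fL(\fH_\pi)$, so $\norm{\widehat{\rh(t)u}(\pi)}_1=\norm{\hat u(\pi)}_1=\norm{\widehat{\la(t)u}(\pi)}_1$ for every $\pi$. Summing the defining series of $\fomenorm{\cdot}$ term by term then yields $\fomenorm{\rh(t)u}=\fomenorm{u}=\fomenorm{\la(t)u}$; that is, $\rh(t)$ and $\la(t)$ are $\fomenorm{\cdot}$-isometric bijections of $\trig$, with inverses $\rh(t\inv)$ and $\la(t\inv)$ respectively.

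Finally, $\trig$ is $\fomenorm{\cdot}$-dense in $\falomeg$ (as observed in the proof of Proposition~\ref{prop:fomeisalg}), so each $\rh(t)$ and $\la(t)$ extends uniquely to a linear isometry of $\falomeg$ onto itself. The one point deserving a word of care is the meaning of ``translation'' on $\falomeg$ when $\ome$ is unbounded, since then an element of $\falomeg$ need not be an honest function on $G$: there $\rh(t)$ is read as the operator sending $u$ to the element of $\etrig$ whose Fourier transform is $\pi\mapsto\pi(t)\hat u(\pi)$, which on $\trig$ --- and, via the inversion formula in Remark~\ref{fourinv}, on all of $\falomeg$ when $\ome$ is bounded --- agrees with the pointwise prescription $\rh(t)u(s)=u(st)$. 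I do not expect any genuine obstacle here: the whole content is the bi-invariance of Haar measure together with the unitarity of the operators $\pi(t)$.
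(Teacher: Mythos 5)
Your proof is correct and follows essentially the same route as the paper: compute $\widehat{\rh(t)u}(\pi)=\pi(t)\hat u(\pi)$ and $\widehat{\la(t)u}(\pi)=\hat u(\pi)\pi(t)\inv$, invoke the unitary invariance of the trace norm termwise, and extend from the dense subspace $\trig$. The extra remark about interpreting translation on $\etrig$ when $\ome$ is unbounded is a sensible clarification but does not change the argument.
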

\begin{proof}
Indeed, for an element $ u\iin \trig $ and $ t\iin G $ we have that
\begin{eqnarray}
 \nn \widehat {(\rh(t)u)}(\pi)= \pi(t)\widehat u(\pi),\ 
\widehat {(\la(t)u)}(\pi)= \widehat u(\pi) \pi(t\inv), \ \pi\in \widehat  G
\end{eqnarray}
and therefore
\begin{eqnarray}
 \nn \no{\rh(t)u}_{A_\om}=\sum_{\pi\in\wh G}\no{\pi(t)\wh u(\pi)}_1d_\pi \om(\pi)
= \sum_{\pi\in\wh G}\no{\wh u(\pi)}_1d_\pi \om(\pi)=\no u_{A_\om}
\end{eqnarray}
and similarly
$$\no{\la(t)u}_{A_\om}=\no u_{A_\om}.$$
 \end{proof}

 Let for a bounded weight $ \om $ on $ \wh{G} $
\begin{eqnarray}
 \nn L^2_\om(G)&=&\left\{\xi\in \etrig:
\no
\xi_{2,\om}^2:=\sum_{\pi\in\wh{G}}\no{\hat{\xi}(\pi)}
_2^2 d_\pi\om(\pi)<\infty\right\},
\end{eqnarray}
where $\no{\hat{\xi}(\pi)}_2$
denotes the Hilbert-Schmidt norm of the operator $\hat{\xi}(\pi) $.
The assumption that $\ome$ is bounded gives us that $L^2_\ome(G)\subset L^2(G)$.

It is well-known that the Fourier algebra $A(G)$ coincides with  the family of functions
$$A(G)=\left\{s\mapsto (f\ast  g)(s)=\int_Gf(t)g( t\inv s)dt:f, g\in L^2(G)\right\}. $$

We observe that the product $\ome_1\ome_2$, of two weights $\ome_1,\ome_2$, is a weight; 
and if $\vphi:\Ree^{>0}\to
\Ree^{>0}$ is a non-decreasing
sub-multiplicative function, then $\vphi\comp\ome$ is a weight for any weight $\ome$.

\begin{proposition}  \label{prop:ltwofactor}
Let $\ome_1,\ome_2$ be bounded weights on $\what{G}$
and $\ome=(\ome_1\ome_2)^{1/2}$.  Then
 $$A_\omega(G)=\left\{s\mapsto (f\ast  g)(s)=\int_Gf(t)g( t\inv s)dt:f\in L_{\om_1}^2(G)
\aand g\in L_{\ome_2}^2(G)\right\}.$$
 \end{proposition}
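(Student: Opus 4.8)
The plan is to prove the two inclusions separately, working on the Fourier-transform side where the weights act by scaling the Schatten norms. First I would unwind what the claimed factorization means on the operator-field level. For $f\in L^2_{\ome_1}(G)$ and $g\in L^2_{\ome_2}(G)$ we have $\what{f\con g}(\pi)=\what f(\pi)\what g(\pi)$, so the right-hand set is
\[
\left\{u\in\etrig:\hat u(\pi)=\hat f(\pi)\hat g(\pi),\
\sum_{\pi}\norm{\hat f(\pi)}_2^2 d_\pi\ome_1(\pi)<\infty,\
\sum_{\pi}\norm{\hat g(\pi)}_2^2 d_\pi\ome_2(\pi)<\infty\right\}.
\]
For the inclusion ``$\supseteq$'', given such $f,g$ I would estimate, using $\norm{\hat f(\pi)\hat g(\pi)}_1\leq\norm{\hat f(\pi)}_2\norm{\hat g(\pi)}_2$ (Hölder for Schatten norms), that
\[
\fomenorm{f\con g}=\sum_\pi\norm{\hat f(\pi)\hat g(\pi)}_1 d_\pi\ome(\pi)
\leq\sum_\pi\norm{\hat f(\pi)}_2\ome_1(\pi)^{1/2}\norm{\hat g(\pi)}_2\ome_2(\pi)^{1/2}d_\pi,
\]
and then apply Cauchy--Schwarz over $\pi$ (splitting $d_\pi=d_\pi^{1/2}d_\pi^{1/2}$) to bound this by $\norm f_{2,\ome_1}\norm g_{2,\ome_2}<\infty$. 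This shows $f\con g\in A_\ome(G)$, giving one containment.

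For the reverse inclusion ``$\subseteq$'', I would start from an arbitrary $u\in A_\ome(G)$ and construct the factors by a polar-type splitting of each $\hat u(\pi)$. Write the polar decomposition $\hat u(\pi)=w_\pi\,|\hat u(\pi)|$ with $w_\pi$ a partial isometry, and set
\[
\hat f(\pi):=w_\pi\,|\hat u(\pi)|^{1/2}\,\left(\frac{\ome_2(\pi)}{\ome_1(\pi)}\right)^{1/4},
\qquad
\hat g(\pi):=|\hat u(\pi)|^{1/2}\,\left(\frac{\ome_1(\pi)}{\ome_2(\pi)}\right)^{1/4}.
\]
Then $\hat f(\pi)\hat g(\pi)=w_\pi|\hat u(\pi)|=\hat u(\pi)$, and since $\norm{|\hat u(\pi)|^{1/2}}_2^2=\norm{|\hat u(\pi)|}_1=\norm{\hat u(\pi)}_1$, one computes
\[
\sum_\pi\norm{\hat f(\pi)}_2^2 d_\pi\ome_1(\pi)
=\sum_\pi\norm{\hat u(\pi)}_1 d_\pi\,\ome_1(\pi)^{1/2}\ome_2(\pi)^{1/2}
=\fomenorm{u}<\infty,
\]
and symmetrically for $g$ with $\ome_2$. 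Hence $f\in L^2_{\ome_1}(G)$, $g\in L^2_{\ome_2}(G)$, and $u=f\con g$, completing the argument. I should also note that $L^2_{\ome_i}(G)\subset L^2(G)$ because the $\ome_i$ are bounded, so the convolution $f\con g$ is a well-defined continuous function and the series manipulations are legitimate; and that $\ome=(\ome_1\ome_2)^{1/2}$ is indeed a weight by the remark preceding the statement (it is $\vphi\comp(\ome_1\ome_2)$ for $\vphi(t)=t^{1/2}$, which is non-decreasing and sub-multiplicative).

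The main obstacle is really just bookkeeping on the operator-norm side: one must make sure the partial isometry $w_\pi$ does not spoil the Hilbert--Schmidt norm computations (it does not, since $\norm{w_\pi a}_2\leq\norm a_2$, with equality when $a$ is supported on the initial space of $w_\pi$, which is the case here because $|\hat u(\pi)|^{1/2}$ has the same range projection as $|\hat u(\pi)|$). A secondary point to handle carefully is the degenerate case where $\hat u(\pi)=0$ for some $\pi$: then simply take $\hat f(\pi)=\hat g(\pi)=0$. No genuine difficulty is expected beyond verifying these finiteness estimates, so the proof should be short once the splitting above is in place.
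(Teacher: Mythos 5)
Your proof is correct and follows essentially the same route as the paper: the forward inclusion via the Schatten H\"older inequality $\norm{ab}_1\leq\norm{a}_2\norm{b}_2$ followed by Cauchy--Schwarz over $\what{G}$, and the reverse inclusion via polar decomposition of $\hat u(\pi)$ and splitting $|\hat u(\pi)|^{1/2}$ between the two factors with the weight-ratio scalars (the paper uses $(\ome_i(\pi)/\ome(\pi))^{1/2}$, which equals your $(\ome_i(\pi)/\ome_j(\pi))^{1/4}$). One small point of bookkeeping: with the paper's convention $\hat u(\pi)=\int_G u(s)\pi(s^{-1})\,ds$ one gets $\what{f\con g}(\pi)=\hat g(\pi)\hat f(\pi)$ rather than $\hat f(\pi)\hat g(\pi)$, so the partial isometry $w_\pi$ should sit in the left factor of that product (the paper accordingly places $V(\pi)$ into $\hat g$); this swap changes none of your norm estimates.
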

 \begin{proof}
 Let $u\in A_{\omega}(G)$. For each $\pi\in\wh{G}$ consider the polar decomposition of $\hat u(\pi)$:
 $\hat u(\pi)=V(\pi)|\hat u(\pi)|$. Let $a(\pi)=
\left(\frac{\ome_1(\pi)}{\ome(\pi)}\right)^{1/2}V(\pi)|\hat u(\pi)|^{1/2}$ and $b(\pi)=
\left(\frac{\ome_2(\pi)}{\ome(\pi)}\right)^{1/2}|\hat u(\pi)|^{1/2}$.
 We have
 $$\sum_{\pi\in\wh{G}} d_{\pi}\omega(\pi)\|a(\pi)\|^2_2\leq\sum_{\pi\in\wh{G}} d_{\pi}\omega_1(\pi)\||\hat u(\pi)|^{1/2}\|^2_2=
 \sum_{\pi\in\wh{G}} d_{\pi}\omega_1(\pi)\|\hat u(\pi)\|_1<\infty$$ 
and similarly
 $$\sum_{\pi\in\wh{G}} d_{\pi}\omega(\pi)\| b(\pi)\|^2_2\leq \sum_{\pi\in\wh{G}} d_{\pi}\omega_2(\pi)\|\hat u(\pi)\|_1<\infty.$$
 Thus if $f,g\in L^2(G)$ are such that $\hat g(\pi)=a(\pi)$ and $\hat f(\pi)=b(\pi)$ then
 $f\in L^2_{\omega_1}(G)$, $g\in L^2_{\omega_2}(G)$ and $u=f\ast g$. Hence $A_\omega(G)\subset L^2_{\omega_1}(G)*L^2_{\omega_2}(G)$.

 Take now  $f\in L^2_{\omega_1}(G)$, $g\in L^2_{\omega_2}(G)$ and let $u=f\ast g$. Then
 \begin{align*}
 \sum_{\pi\in\wh{G}}d_{\pi}\omega(\pi)&\|\hat u(\pi)\|_1 =\sum_{\pi\in\wh{G}}d_{\pi}\omega(\pi)\|\hat g(\pi)\hat f(\pi)\|_1\\
 &\leq\sum_{\pi\in\wh{G}}d_{\pi}\omega_1(\pi)^{1/2}\omega_2(\pi)^{1/2}\|\hat g(\pi)\|_2\|\hat f(\pi)\|_2\\
 &\leq\left(\sum_{\pi\in\wh{G}}d_{\pi}\omega_1(\pi)\|\hat g(\pi)\|_2^2\right)^{1/2}\left(\sum_{\pi\in\wh{G}}d_{\pi}\omega_2(\pi)\|\hat f(\pi)\|_2^2\right)^{1/2}\\
 &=\|f\|_{2,\ome_1}\|g\|_{2,\ome_2}
 \end{align*}
 Thus $u\in A_\omega(G)$.
 \end{proof}


\section{The spectrum of a Beurling-Fourier algebra}\label{spectrum_section}

It follows from the identification 
$\trig^\dagger\simeq\prod_{\pi\in\what{G}}\fL(\fH_\pi)$
given in (\ref{eq:trigdual1}) that
for a weight $\ome$, $\falomeg$ has continuous dual space
\begin{equation}\label{eq:trigdual}
\falomeg^*=\left\{T\in\trig^\dagger:\dfomenorm{T}=\sup_{\pi\in\what{G}}
\frac{\opnorm{\pi(T)}}{\ome(\pi)}<\infty\right\}.
\end{equation}
The definition of $G_\Cee$ then immediately gives Gelfand spectrum
\begin{equation}\label{eq:spectrum}
G_\ome=\what{\falomeg}=\left\{\theta\in G_\Cee:\sup_{\pi\in\what{G}}
\frac{\opnorm{\pi(\theta)}}{\ome(\pi)}<\infty\right\}
\end{equation}
where $<\infty$ may be replaced by $\leq 1$, by well-known theorem of Gelfand
that multiplicative functionals on Banach algebras are automatically contractive.
Proposition \ref{prop:GCproperties} provides that $G_\Cee$ is closed under polar 
decomposition, and that for $\theta\iin G_\Cee$ and $\pi\in\what{G}$, 
$\opnorm{\pi(\theta)}=
\opnorm{|\pi(\theta)|}=r(|\pi(\theta)|)$, where the latter is the spectral radius.

The terminology below is motivated by \cite{krantz}.  

\begin{proposition}\label{prop:Gomegaproperties}
If $\ome$ is a bounded weight on $\what{G}$ then $G_\ome$ is a compact subset
of $G_\Cee$ which contains $G$ and enjoys the following properties:

{\bf (i)} $G_\ome$ is {\rm $G$-Reinhardt}: for $s\iin G$ and
$\theta\in G_\ome$ we have 
$s\theta,\theta s\in G_\ome$; and

{\bf (ii)} $G_\ome$ is {\rm log-convex}: for $\theta,\theta'\in G_\ome^+=G_\ome\cap G_\Cee^+$
and $0\leq s\leq 1$, we have $\theta^s\theta'^{(1-s)}\in G_\ome$.

In particular, $(\theta,s)\mapsto \theta s:G_\ome^+\times G\to G_\om$ is a 
homeomorphism.

{\bf (iii)} If $\ome$ is symmetric then $G_\ome$ is inverse-closed.
\end{proposition}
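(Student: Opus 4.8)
The plan is to verify each of the four assertions—compactness of $G_\ome$, its containing $G$, properties (i), (ii), (iii)—mostly by pushing the weight inequality $\ome(\sig)\leq\ome(\pi)\ome(\pi')$ through the definitions, using the structural facts from Proposition~\ref{prop:GCproperties}.

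First, the containment $G\subset G_\ome$: for $s\in G$ we have $\pi(s)$ unitary, so $\opnorm{\pi(s)}=1$, and boundedness of $\ome$ (i.e.\ $\inf\ome\geq C>0$) gives $\sup_\pi\opnorm{\pi(s)}/\ome(\pi)\leq 1/C<\infty$; hence $s\in G_\ome$. For compactness, I would use Proposition~\ref{prop:GCproperties}(iv): $G_\Cee\cong G\times\gg$ via $(s,X)\mapsto s\exp(iX)$, with the relativized weak topology from (\ref{eq:trigdual1}). On $G_\ome$, the condition $\opnorm{\pi(\theta)}\leq\ome(\pi)$ for all $\pi$ forces each coordinate $\pi(\theta)$ to lie in a norm-bounded (hence, for fixed finite-dimensional $\fH_\pi$, compact) subset of $\fL(\fH_\pi)$; so $G_\ome$ sits inside a product of compact sets $\prod_\pi\{T\in\fL(\fH_\pi):\opnorm T\leq\ome(\pi)\}$, which is compact in the product (= weak) topology by Tychonoff. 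It remains to check $G_\ome$ is closed in $G_\Cee$, equivalently that the defining inequalities $\opnorm{\pi(\theta)}\leq\ome(\pi)$ are closed conditions in the weak topology (each $\theta\mapsto\pi(\theta)$ is continuous coordinate projection, and $\opnorm{\cdot}$ is weakly lower semicontinuous on the finite-dimensional space $\fL(\fH_\pi)$), together with $G_\Cee$ itself being closed in $\trig^\dagger$ (which follows from the grouplike characterization (\ref{eq:grouplike}), a closed condition). Intersecting a closed set with the compact set above gives compactness.

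For (i), given $s\in G$, $\theta\in G_\ome$, the product $s\theta$ satisfies $\pi(s\theta)=\pi(s)\pi(\theta)$, and since $\pi(s)$ is unitary, $\opnorm{\pi(s\theta)}=\opnorm{\pi(\theta)}\leq\ome(\pi)$; the same for $\theta s$. Thus $s\theta,\theta s\in G_\ome$ (they are already in $G_\Cee$, a group). For (ii), take $\theta,\theta'\in G_\ome^+$ and $0\leq s\leq 1$. By Proposition~\ref{prop:GCproperties}(ii), $\theta^s$ and $\theta'^{1-s}$ lie in $G_\Cee^+$, and their product is in $G_\Cee$. For the norm estimate, $\pi(\theta^s)=\pi(\theta)^s$ has $\opnorm{\pi(\theta)^s}=\opnorm{\pi(\theta)}^s\leq\ome(\pi)^s$ (spectral calculus on the positive operator $\pi(\theta)$), similarly $\opnorm{\pi(\theta'^{1-s})}\leq\ome(\pi)^{1-s}$, so $\opnorm{\pi(\theta^s\theta'^{1-s})}\leq\opnorm{\pi(\theta)^s}\,\opnorm{\pi(\theta')^{1-s}}\leq\ome(\pi)^s\ome(\pi)^{1-s}=\ome(\pi)$. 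The homeomorphism statement then follows by combining (i), the polar decomposition of Proposition~\ref{prop:GCproperties}(i) (each $\theta\in G_\ome$ writes as $\theta=s|\theta|$ with $s\in G$ and $|\theta|\in G_\ome^+$ since $\opnorm{|\pi(\theta)|}=\opnorm{\pi(\theta)}\leq\ome(\pi)$), uniqueness of that decomposition, and the fact that the relevant maps are continuous for the weak topology with both sides compact Hausdorff.

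For (iii), suppose $\ome$ is symmetric, $\ome(\bar\pi)=\ome(\pi)$. Let $\theta\in G_\ome$; I want $\theta^{-1}\in G_\ome$. By polar decomposition write $\theta=s|\theta|$ with $s\in G$, $|\theta|\in G_\ome^+$; by (i) it suffices to show $|\theta|^{-1}\in G_\ome$. Now $|\theta|^{-1}\in G_\Cee^+$ by Proposition~\ref{prop:GCproperties}(ii). By Corollary~\ref{prop:ggproperties1}, $\pi(|\theta|^{-1})^\t=\bar\pi(|\theta|)$, so $\opnorm{\pi(|\theta|^{-1})}=\opnorm{\pi(|\theta|^{-1})^\t}=\opnorm{\bar\pi(|\theta|)}\leq\ome(\bar\pi)=\ome(\pi)$, using that $|\theta|\in G_\ome$ applied to the representation $\bar\pi$, and symmetry in the last step. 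Hence $|\theta|^{-1}\in G_\ome$, and then $\theta^{-1}=|\theta|^{-1}s^{-1}\in G_\ome$ by (i).

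The main obstacle I anticipate is the topological bookkeeping for compactness and the homeomorphism claim: making precise that the weak topology on $\trig^\dagger$ restricts on each coordinate to the (unique) topology of the finite-dimensional space $\fL(\fH_\pi)$, that $\opnorm\cdot$ is weakly lower semicontinuous there, and that $G_\Cee$ is weakly closed, so that $G_\ome$ is a closed subset of a Tychonoff-compact product. Everything else is a direct manipulation of operator norms through the multiplicativity $\pi(\theta\theta')=\pi(\theta)\pi(\theta')$, unitarity of $\pi(s)$, the functional calculus of Proposition~\ref{prop:GCproperties}(ii), and Corollary~\ref{prop:ggproperties1}.
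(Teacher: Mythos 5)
Your proof is correct and follows essentially the same route as the paper: (i) via unitarity of $\pi(s)$ (the paper phrases this through Proposition~\ref{transl}), (ii) via the spectral-calculus identity $\opnorm{\pi(\theta)^s}=\opnorm{\pi(\theta)}^s$ together with the group property of $G_\Cee$, and (iii) via Corollary~\ref{prop:ggproperties1} and the symmetry of $\ome$. The only differences are that you supply the Tychonoff argument for compactness, which the paper asserts without proof, and that in (iii) you first reduce to $|\theta|$ by polar decomposition so that Corollary~\ref{prop:ggproperties1} is applied only to elements of $G_\Cee^+$, where it is actually stated---a point the paper's proof elides.
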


\begin{proof}
(i) This is immediate from Proposition \ref{transl} and the fact that operators of 
translation are multiplicative.

(ii) It follows from  Proposition \ref{prop:GCproperties} that 
$\theta^s\theta'^{(1-s)}\in G_\Cee$ (though not necessarily in $G_\Cee^+$).
It is a standard fact of functional calculus that $\opnorm{\pi(\theta)^s}=
\opnorm{\pi(\theta)}^s$ for each $\pi\iin\what{G}$, and hence we have
\begin{align*}
\sup_{\pi\in\what{G}}\frac{\opnorm{\pi(\theta)^s\pi(\theta')^{(1-s)}}}{\ome(\pi)}
&\leq \sup_{\pi\in\what{G}}
\frac{\opnorm{\pi(\theta)}^s\opnorm{\pi(\theta')}^{1-s}}{\ome(\pi)} \\
&\leq\sup_{\pi\in\what{G}}
\frac{\max(\opnorm{\pi(\theta)},\opnorm{\pi(\theta')})}{\ome(\pi)}<\infty.
\end{align*}
It is immediate from (\ref{eq:spectrum}) that $\theta^s\theta'^{(1-s)}\in G_\ome$.

The homeomorphism identifying $G_\ome^+\times G\simeq G_\om$ is an immediate
consequence of (i), (ii) and Proposition \ref{prop:GCproperties} (iv).

(iii)
If $\ome$ is symmetric, then for $\theta\iin G_\Cee$, Corollary \ref{prop:ggproperties1}
shows that 
\[
\sup_{\pi\in\what{G}}\frac{ \opnorm{ \pi(\theta^{-1}) } }{\ome(\pi)}=
\sup_{\pi\in\what{G}}\frac{\opnorm{ \bar{\pi}(\theta) }}{\ome(\pi)},
\]
and hence
$\theta^{-1}\in G_\om$ if and only if $\theta\in G_\om$.
\end{proof}

If $\ome$ is bounded, then it is clear from Proposition \ref{prop:fomeisalg} that
$A_\ome(G)$ is semisimple.  If $\ome$ is not bounded this is not
as clear, but still true.  The following is motivated by \cite[2.8.2]{kanuith}.

\begin{theorem}\label{theo:semisimple}
The algebra $A_\ome(G)$ is semisimple.
\end{theorem}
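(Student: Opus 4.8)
The plan is to show that the Gelfand transform on $A_\ome(G)$ is injective, i.e.\ that if $u\in A_\ome(G)$ satisfies $\dpair{\theta}{u}=0$ for every $\theta\in G_\ome$, then $u=0$. Since $G\subset G_\ome$ by the remarks preceding Proposition \ref{prop:Gomegaproperties} (evaluation functionals are multiplicative, and they are bounded on $A_\ome(G)$: indeed $\opnorm{\pi(s)}=1\leq\ome(\pi)$ as $\ome(1)\geq 1$ and $\ome$ is a weight so $\ome(\pi)\geq\ome(1)\cdot\ome(\pi)^{-1}\cdots$ — more simply, $G_\ome$ always contains $G$ because each $\pi(s)$ is unitary and $\ome\geq 1$ on the trivial representation forces... ), the vanishing of $\dpair{\theta}{u}$ for all $\theta\in G_\ome$ in particular forces $\dpair{s}{u}=u(s)=0$ for all $s\in G$. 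Now I must argue that $u$, as an element of $\etrig$, is determined by these ``values'': concretely, for a fixed $\pi_0\in\what G$ and $\xi_0,\eta_0\in\fH_{\pi_0}$, I want to recover $(\hat u(\pi_0)\eta_0,\xi_0)$ from the function $s\mapsto\dpair{s}{u}$.

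The key step is the following regularization trick, in the spirit of \cite[2.8.2]{kanuith}: fix $\pi_0\in\what G$ and let $v=d_{\pi_0}\,\overline{(\pi_0(\cdot)\eta_0,\xi_0)}\in\tripig[\bar\pi_0]\subset\trig$. Then $v\cdot u\in A_\ome(G)$ (Proposition \ref{prop:fomeisalg}), and by the orthogonality relations $\widehat{v\cdot u}(\pi)=\pi(v)\hat u(\pi)$ is supported on those $\pi$ appearing in $\bar\pi_0\otimes(\text{supp }u)$; in any case $v\cdot u$ is again a genuine element of $A_\ome(G)$, in fact (when $\ome$ is bounded) of $A(G)$, so its Fourier inversion formula holds pointwise on $G$ (Remark \ref{fourinv}). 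Evaluating $(v\cdot u)(s)=\sum_{\pi}d_\pi\Tr(\pi(s)\pi(v)\hat u(\pi))$: integrating against an appropriate matrix coefficient of a fixed $\pi_1$ and using Schur orthogonality extracts a prescribed matrix entry of $\pi_1(v)\hat u(\pi_1)$, and choosing $\pi_1$ and the entry suitably (using that $\pi_0$ occurs in $\bar\pi_0\otimes\pi_0$ with the identity intertwiner available) recovers $(\hat u(\pi_0)\eta_0,\xi_0)$. Since $u(s)=0$ for all $s$ forces $(v\cdot u)(s)=u(s)v(s)$... — more cleanly: $\widehat{v\cdot u}(\pi)$ vanishes for all $\pi$ because $v\cdot u\in A(G)$ and $(v\cdot u)(s)=v(s)u(s)=0$ for all $s\in G$, and $A(G)$ is semisimple with spectrum $G$ (the classical fact). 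Hence $\pi(v)\hat u(\pi)=0$ for all $\pi$ and all choices of $v$, which forces $\hat u(\pi_0)=0$ for every $\pi_0$, i.e.\ $u=0$.

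For the unbounded case, where we cannot directly invoke that $A_\ome(G)\subset A(G)$, the remedy is to compose with a bounded weight: pick any bounded weight $\ome_0\leq\ome$ (e.g.\ $\ome_0\equiv 1$ is bounded, and $1\leq\ome$ since $\ome(\pi)\ge\ome(1)\ge 1$ actually needs $\ome(\pi)\cdot\ome(\pi)\geq\ome(1\subset\pi\otimes\bar\pi)$ only in the symmetric case; in general one still has $A_\ome(G)\subset A_1(G)=A(G)$ precisely when $\ome$ is bounded, so in the genuinely unbounded case one argues directly on $\etrig$). The clean route that avoids any boundedness hypothesis: for $u\in A_\ome(G)$ with all Gelfand transforms zero, the element $v\cdot u$ as above lies in $A_\ome(G)$, and $\fomenorm{v\cdot u}\le\fnorm{v}_{\mathrm{op-type}}\fomenorm{u}$; moreover its Fourier support is finite (since $v$ has finite support and multiplication in $\trig^\dagger$ respects tensor decompositions, $\widehat{v\cdot u}(\pi)\neq 0$ only for $\pi$ in a finite set determined by $\pi_0$ together with... no — $u$'s support may be infinite). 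So instead take $v,w\in\trig$ and consider $v\cdot u\cdot w$, or rather convolve $u$ on both sides by finitely-supported trig polynomials to land in $\trig$ itself; then $v\cdot u\cdot w\in\trig\subset A(G)$, its Gelfand transform over $G_\ome$ still vanishes (as $\trig\subset A_\ome(G)$ and the spectrum contains $G$), hence it vanishes as a function on $G$, hence as an element of $\trig$. Letting $v,w$ range over approximate identities for the pointwise product concentrated near each $\pi_0$ recovers $\hat u(\pi_0)=0$.

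\emph{Main obstacle.} The only real subtlety is the unbounded case: one must reduce to the classical fact ``$A(G)$ is semisimple with spectrum $G$'' without assuming $A_\ome(G)\subset A(G)$. The mechanism above — multiply $u$ on both sides by trigonometric polynomials to produce elements of $\trig$, whose Gelfand transforms over $G_\ome$ coincide with their pointwise values on $G$, and use that these generate enough of the Fourier data of $u$ — handles this, and the verification that $\pi(v)\hat u(\pi)\pi(w)$ can be made to equal any prescribed rank-one piece of $\hat u(\pi_0)$ is a routine Schur-orthogonality computation that I would not spell out in full.
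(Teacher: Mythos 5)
Your argument is fine when $\ome$ is bounded, but that is the case the paper itself dismisses as immediate (then $A_\ome(G)\subset A(G)$, $G\subset G_\ome$, and semisimplicity of $A(G)$ finishes it); the whole content of the theorem is the unbounded case, and there your proposal has a genuine gap. The inclusion $G\subset G_\ome$ is \emph{equivalent} to boundedness of $\ome$: a point $s\in G$ lies in $G_\ome$ iff $\sup_{\pi}\opnorm{\pi(s)}/\ome(\pi)=\sup_{\pi}1/\ome(\pi)<\infty$. So for unbounded $\ome$ you cannot pass from ``the Gelfand transform of $u$ vanishes on $G_\ome$'' to ``$u(s)=0$ for all $s\in G$''. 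Your attempted repair --- convolve $u$ with finitely supported trigonometric polynomials so as to land in $\trig$ --- does not close it: convolution is not the product of $A_\ome(G)$ (the product is pointwise multiplication), so the Gelfand transform over $G_\ome$ is not multiplicative with respect to it and there is no reason $v\ast u$ should still be annihilated by the characters in $G_\ome$; and even if it were, your next step, ``hence it vanishes as a function on $G$,'' again invokes $G\subset G_\ome$.

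The mechanism you are missing, which is the paper's proof, is this. Since the constant $1$ lies in $A_\ome(G)$, the algebra is unital and $G_\ome\neq\emptyset$; fix any $\theta\in G_\ome$. The module action of $A_\ome(G)^*$ on $A_\ome(G)$ satisfies $\norm{T\cdot u}_A\leq\dfomenorm{T}\,\fomenorm{u}$, so $\theta\cdot u\in A(G)$ is a genuine continuous function even when $u$ itself is not. Since $\theta\in G_\Cee$ is invertible (each $\pi(\theta)$ is invertible), $u=\theta^{-1}\cdot(\theta\cdot u)$ formally, so $u\neq 0$ forces $\theta\cdot u\neq 0$ in $A(G)$; hence there is $s\in G$ with $(\theta\cdot u)(s)=\dpair{u}{s\theta}\neq 0$, and $s\theta\in G_\ome$ because translations are isometries (Proposition \ref{transl}, Proposition \ref{prop:Gomegaproperties}(i)). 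Thus $u$ is not in the radical. The point is to move a single character $\theta$ around by $G$-translation, rather than to try to force $G$ itself into the spectrum.
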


\proof We first note that the constant function 1 is in $A_\ome(G)$. 
Hence the spectrum $G_\ome$ is non-empty.
Let  $\theta\in G_\ome$.

We note that if $u\in A_\ome(G)$ and $T\in A_\ome(G)^*$
then $T\cdot u\in A(G)$.  Indeed
\begin{align*}
\norm{T\cdot u}_A
&=\sum_{\pi\in\what{G}}\norm{\pi(T)\hat{u}(\pi)}_1d_\pi 
\leq\sum_{\pi\in\what{G}}\opnorm{\pi(T)}\norm{\hat{u}(\pi)}_1d_\pi \\
&\leq\sum_{\pi\in\what{G}}\frac{\opnorm{\pi(T)}}{\ome(\pi)}
\norm{\hat{u}(\pi)}_1d_\pi\ome(\pi) 
\leq \norm{T}_{A_\ome^*}\norm{u}_{A_\ome}<\infty.
\end{align*}
Thus for $\theta\iin G_\ome$, as above, $\theta\cdot u\in A(G)$ for $u\iin A_\ome(G)$.
Suppose $u\not=0$.
Since $u=\theta^{-1}\cdot(\theta\cdot u)$
(formally, in $\etrig$), we have that $\theta\cdot u\not=0$ in $A(G)$, and hence
there is some $s\iin G$ for which
\[
\langle u,s\theta\rangle=\theta\cdot u(s)\not=0.
\]
Since $s\theta\in G_\ome$ for all $s\iin G$ by Proposition \ref{prop:Gomegaproperties},
it follows that $A_\ome(G)$ admits no radical elements, and thus
is semisimple. \endpf


The following fact, which will be useful for the following
examples, is well-known.  If $\til{\om}:\En\to\Ree^{>0}$ is a {\it weight}, i.e.\
$\til{\ome}(n+m)\leq\til{\ome}(n)\til{\ome}(m)$, then 
\begin{equation}\label{eq:beurlingcond}
\rho_{\til{\ome}}=\lim_{n\to\infty}\til{\ome}(n)^{1/n}
=\inf_{n\in\En}\til{\ome}(n)^{1/n}.\end{equation}  
See, for example, \cite[A.1.26]{dales}.  Furthermore, if $\til{\om}$ is bounded
below, then $\rho_{\til{\ome}}\geq 1$.

\begin{example}\label{torus}\rm
Let $G=\Tee^n$ and $\om$ be a bounded weight on $\Zee^n\simeq\wh{G}$. Let 
$$\ell^1(\Zee^n,\om)=\left\{f:\Zee^n\to\Cee: \norm{f}_{1,\om}=\sum_{\mu\in\Zee^n}|f(\mu)|\om(\mu)<\infty \right\}$$
with convolution as multiplication.  The Fourier transform identifies
$\ell^1(\Zee^n,\om)$ with $A_\ome(\Tee^n)$ (formally, in the case that $\ome$ is not bounded).  
Let $\eps_1,\dots,\eps_n$ denote the
standard integer basis of $\Zee^n$.  Since $\del_{\eps_1},\dots,\del_{\eps_n}$ and their
inverses generate $\Zee^n$, any character $\chi\in\what{\ell^1(\Zee^n,\om)}\simeq \Tee^n_\ome$
is determined by the values $z_j=\chi(\del_{\eps_j})$, $j=1,\dots,n$.  Hence the Gelfand
transform converts $f\iin \ell^1(\Zee^n,\om)$ into a Laurent series
\[
\sum_{\mu\in\Zee^n}f(\mu)z^\mu\quad(z^\mu=z_1^{\mu_1}\dots z_n^{\mu_n}).
\]
These series converge, simultaneously for all $f\iin \ell^1(\Zee^n,\ome)$, if and only if
$|z^\mu|^k=|z^{k\mu}|\leq \ome(k\mu)$ for each $\mu\iin\Zee^n$.  Thus it follows from
an application of (\ref{eq:beurlingcond}) that
\[
\Tee^n_\ome\simeq\{z\in\Cee^n:
1/\rho_\ome(-\mu)\leq |z^\mu|\leq\rho_\ome(\mu)\text{ for all }\mu\in\Zee^n\}
\]
where $\rho_\ome(\mu)=\lim_{k\to\infty}\omega(k\mu)^{1/k}$.  If $n\geq 2$ the
family of defining inequalities can be simplified to choices of $\mu$ for which
$\mathrm{gcd}(\mu_1,\dots,\mu_n)=1$.  However if $n=1$ we obtain the usual
annulus of convergence with inner radius $1/\rho_\ome(-1)$ and outer radius
$\rho_\ome(1)$.

We observe that in the case that $n\geq 2$ and $\ome(\mu)=\lam^{\mu_1}$
for some $\lam>1$, then $\Tee^n_\ome\supsetneq\Tee^n$, but is not
an open subset of $\Cee^n$.  For any $\lam\iin(\Ree^{\geq 1})^n$
with $\lam_1\dots\lam_n>1$, the weight $\ome(\mu)=\lam^\mu$ defines
an exponential weight.

If $\alp>0$, the weight $\ome_\alp(\mu)=(1+\norm{\mu}_1)^\alp$ is a classical polynomial 
weight.  these weights will be generalised in (\ref{deftaS}).
\end{example}

\begin{example} \label{gom}\rm
Let $G=\T\rtimes \Z_2=\{(s,a): s\in \T, a\in \Z_2\}$ with multiplication 
$((s,a)(t,b)=(st^{a},ab)$, where $\Zee_2=\{1,-1\}$.  It is a straightforward
application of the ``Mackey machine'' that
$\wh{G}=\{1,\sigma,\pi_n, n\geq 1\}$, where
$\sigma$ is a one-dimensional representation given by $\sigma((s,a))=a$ and 
$\pi_n$, $n\geq 1$ is a two-dimensional representation defined by
$$
\pi_n((s,a))=\left(\begin{array}{cc}s^n&0\\0&s^{-n}\end{array}\right)
\left(\begin{array}{cc}0&1\\1&0\end{array}\right)^{(1-a)/2}.
$$
We have that the corresponding characters are 
$$\chi_\sigma((s,a))=a,\quad
\chi_{\pi_n}((s,a))=\left\{\begin{array}{cc}s^n+s^{-n},& a=1,\\
0,&a=-1.\end{array}\right.$$ 
Taking into account that
$\chi_{\pi\otimes\rho}=\chi_\pi\chi_\rho$, we obtain that
$1\otimes\sigma=\sigma$, $1\otimes\pi_n=\pi_n$,
$\sigma\otimes\sigma=1$, $\sigma\otimes\pi_n\approx\pi_n$,
$\pi_n\otimes\pi_m\approx \pi_{m+n}\oplus\pi_{|m-n|}$ if $m\ne n$ and
$\pi_n\otimes\pi_n\approx \pi_{2n}\oplus 1\oplus 1$.  Hence any weight $\om$ on $\wh{G}$
is governed only by the relations that $\ome(1),\ome(\sig)\geq 1$, and
$\til{\ome}(n)=\ome(\pi_n)$ defines
a weight on $\{0\}\cup\En$ which satisfies $\til{\ome}(0)=\ome(1)$ and
$\til{\ome}(|n-m|)\leq \til{\ome}(n)\til{\ome}(m)$.
Since $\bar{\pi}\approx\pi$ for each $\pi\iin\what{G}$, any weight
$\ome$ is automatically symmetric.

We observe that $G_\Cee\simeq\Cee^{*}\rtimes \Z_2$.  Indeed, we appeal to
\cite[Prop.\ 9]{cartwrightm} to see that 
$(G_\Cee)_e\simeq(G_e)_\Cee\simeq\Cee^*$, as $G_e\simeq\Tee$, and that
$G_\Cee/(G_\Cee)_e\simeq(G/G_e)_\Cee$, where $(G/G_e)_\Cee\simeq(\Zee_2)_\Cee\simeq
\Zee_2$ by definition of the complexification.  In particular we can identify
$G_\Cee^+=\{(\lam,1):\lam>0\}$ and we have $G_\Cee=GG_\Cee^+$.
Thus for $\lam>0$ and $s\iin\T$, the corresponding element
$\theta\iin G_\Cee$ is by $1(\theta)=1$, $\sig(\theta)=\sigma(s)$ and
$\pi_n(\theta)=\pi_n(s)\Lambda(n)$, where
$\Lambda(n)=\begin{pmatrix} \lambda^n&0\\0&\lambda^{-n}\end{pmatrix}$,  so 
$\Lambda(n)=\pi_n(|\theta|)$.

Let $\om :\wh{G}\to \R^{>0}$ be any weight and $\rho_\ome=\lim_{n\to\infty}
\ome(\pi_n)^{1/n}$.  For example if $\alpha>1$, let
$\om(1)=\om(\sigma)=1$ and $\om(\pi_n)=\alp^n$, and then $\rho_\ome=\alp$.
For $\theta$ as above, 
$\opnorm{\pi_n(\theta)}=\opnorm{\Lambda(n)}=\max(\lam^n,\lam^{-n})$.  
For each $n\iin \En$ (\ref{eq:beurlingcond}) implies $\tilde\ome(n)\geq\rho_\om^n$ and hence
\[
\sup_{n\in\En}\frac{\opnorm{\pi_n(\theta)}}{\tilde\ome(n)}
\leq\sup_{n\in\En}\frac{\max(\lam^n,\lam^{-n})}{\rho_\om^n}<\infty\quad
\Leftrightarrow\quad \max(\lam,\lam^{-1})\leq \rho_\om.
\]
Hence $G_\om\simeq\{(z,a):a\in\Zee_2, z\in\Cee, 1/\rho_\om\leq |z|\leq\rho_\om\}$.
In particular, if $\rho_\om>1$, then $G_\ome\supsetneq G$.
\end{example}

\begin{example}\label{sutwo}\rm
Let $G=\mathrm{SU}(2)$.  
We have that $\what{G}=\{\pi_n:n\in\{0\}\cup\En\}$ where $\pi_0=1$ and
$\pi_1$ is the standard representation.  The representations satisfy the
well-known tensor relations 
\[
\pi_n\otimes\pi_{n'}\approx
\bigoplus_{j=0}^{(n+n'-|n-n'|)/2}\pi_{|n-n'|+2j}.  
\]
Thus any weight
$\ome:\what{G}\to\Ree^{>0}$ is given by a weight $\til{\ome}:\{0\}\cup\En\to\Ree^{>0}$
which satisfies $\til{\ome}(|n-n'|+2j)\leq\til{\ome}(n)\til{\ome}(n')$ for
$j=0,\dots,(n+n'-|n-n'|)/2$. For example, any exponential weight
$\til{\ome}(n)=\lam^n$ for some $\lam>0$, suffices
 We have for every $n$ that $\bar{\pi}_n\approx\pi_n$,
so every weight is automatically symmetric.

It is well-known that $\mathfrak{su}(2)_\Cee=
\mathfrak{sl}_2(\Cee)$. It follows Proposition \ref{prop:GCproperties} (iii)
that $\mathrm{SL}_2(\Cee)\simeq\pi_1(i\mathfrak{su}(2))\pi_1(G)=\pi_1(G_\Cee)$.
Then, since $\pi_1$ generates $\what{G}$, we find
from (\ref{eq:tensdec}) that $\pi_1(G_\Cee)$ determines $G_\Cee$.  Hence
$\mathrm{SL}_2(\Cee)\simeq G_\Cee$.

Now given a weight $\ome$, let $\rho_\ome=\lim_{n\to\infty}\til{\ome}(n)^{1/n}$.
Any element of $\mathrm{SL}_2(\Cee)^+$ is, up to unitary equivalence, $\Lambda
=\begin{pmatrix}\lam & 0 \\ 0 & \lam^{-1}\end{pmatrix}$ for some $\lam>0$.
Taking successive tensor products $\Lambda^{\otimes n}
\approx\bigoplus_{j=0}^{(n+n'-|n-n'|)/2}\pi_{|n-n'|+2j}(\Lambda)$, we see by induction that
$\opnorm{\pi_n(\Lambda)}=\max(\lam^n,\lam^{-n})$.  Thus, using reasoning as
in the example above, and then (\ref{eq:spectrum}) and comments thereafter, we see that
\[
G_\ome\simeq\bigl\{x\in\mathrm{SL}_2(\Cee):\sig(|x|)=\{\lam,\lam^{-1}\},\,1/\rho_\ome\leq
\lam\leq\rho_\ome\bigr\}.
\]
\end{example}

\begin{example}\label{exponentialweight}\rm
Suppose $G_e$ is non-trivial so $G_\Cee\supsetneq G$.  If $\theta\in G_\Cee\setdif G$
let $\ome_\theta(\pi)=\opnorm{\pi(\theta)}=\opnorm{\pi(|\theta|)}$.  It is immediate from
(\ref{eq:tensdec}) that $\ome_\theta$ is a weight, and 
from Proposition \ref{prop:GCproperties} (i) we may choose $\theta$
to be positive.  It follows Corollary \ref{prop:ggproperties1} that $\ome_\theta$ is
symmetric if and only if $\opnorm{\pi(\theta)^{-1}}=\opnorm{\pi(\theta)}$
for each $\pi\in\what{G}$.  For the cases in Examples \ref{torus}, \ref{gom}
and \ref{sutwo} above,
these weights generalise the exponential weights.   
\end{example}

We can take advantage of Proposition \ref{prop:Gomegaproperties} to see that
$G_\ome$ contains some analytic structure when it is bigger than $G$.
We note that by Proposition \ref{prop:Gomegaproperties} $G_\om^+$ is
{\it logarithmically star-like} about $e$:  for $\theta\iin G_\om^+$ and
$0\leq s\leq 1$, $\theta^s=\theta^se^{1-s}\in G_\om^+$.  We will call
$\theta\iin G_\om^+\setdif\{e\}$ a {\it relative interior point} of $G_\om^+$ if 
$\theta^{1+\eps}\in G_\om^+$ for some $\eps>0$.  If $G_\om\supsetneq G$,
then $G_\om^+$ always admits relative interior points.

\begin{theorem}\label{theo:analyticstructure}
Suppose that $\ome$ is bounded and $G_\ome\supsetneq G$.  
Then for any relative interior point $\theta\iin G_\om^+\setdif\{e\}$ there are real 
numbers $\alp<\beta$ such that for every $u\iin A_\om(G)$,
$u_\theta(z)=\dpair{\theta^z}{u}$ defines 
a holomorphic function on $S_{\alp,\beta}=\{z\in\Cee:\alp <\re z<\beta\}$.
\end{theorem}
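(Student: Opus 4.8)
The plan is to realise $\theta^z$ as a one-parameter exponential and then run a Weierstrass-type estimate against the norm of $A_\om(G)$. Since $\theta\in G_\om^+\subseteq G_\Cee^+$, Proposition~\ref{prop:GCproperties}(iv) lets me write $\theta=\exp(iX)$ with $X\in\gg$; then Proposition~\ref{prop:GCproperties}(ii) gives $\theta^z\in G_\Cee$ for every $z\in\Cee$, with $\pi(\theta^z)=\exp(iz\pi(X))$ for each $\pi\in\what G$. Since $X\in\gg$, $\pi(X)$ is skew-hermitian, so $i\pi(X)$ is hermitian and $\exp(iz\pi(X))$ is a normal operator. The first point I would establish is a ``vertical invariance'': for $z=t+is$ with $t,s\in\Ree$ one has $izX=itX-sX$, hence $\theta^z=\exp(itX)\exp(-sX)=\theta^{t}\exp(-sX)$; since $\exp(-sX)\in\exp(\gg)\subseteq G_e\subseteq G$, the operator $\pi(\exp(-sX))$ is unitary, so $\opnorm{\pi(\theta^z)}=\opnorm{\pi(\theta^{t})}$ depends only on $t=\re z$. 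Moreover, by the $G$-Reinhardt property (Proposition~\ref{prop:Gomegaproperties}(i)), if $\theta^{t}\in G_\om$ then $\theta^z=\theta^{t}\exp(-sX)\in G_\om$, so $\dpair{\theta^z}{u}$ is a well-defined scalar for every $u\in A_\om(G)$.

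Next I would pin down the strip. Because $e=\theta^0$, $\theta=\theta^1$ and (by the relative-interior hypothesis) $\theta^{1+\eps}$ all lie in $G_\om^+$, log-convexity (Proposition~\ref{prop:Gomegaproperties}(ii)) applied to the pair $\theta^{1+\eps}$, $e$ gives $\theta^{t}\in G_\om$ for every $t\in[0,1+\eps]$. I would take $\alp=0$ and $\beta=1+\eps$ (any $\alp<\beta$ with $\theta^{\alp},\theta^{\beta}\in G_\om$ works just as well). For $t\in\{\alp,\beta\}$ the constant $C_t:=\sup_{\pi\in\what G}\opnorm{\pi(\theta^{t})}/\om(\pi)$ is finite since $\theta^t\in G_\om$. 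Writing $\pi(\theta^{t})=\exp\bigl(t\,i\pi(X)\bigr)$, its operator norm equals $\max_j e^{t\mu_j(\pi)}$, where the $\mu_j(\pi)$ are the real eigenvalues of $i\pi(X)$; as a maximum of exponentials this is log-convex in $t$, so $\opnorm{\pi(\theta^{t})}\le\max(C_\alp,C_\beta)\,\om(\pi)$ for all $t\in[\alp,\beta]$ and all $\pi$. Combined with the vertical invariance this yields $C:=\sup\{\opnorm{\pi(\theta^z)}/\om(\pi):\pi\in\what G,\ \alp\le\re z\le\beta\}\le\max(C_\alp,C_\beta)<\infty$.

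The conclusion is then a routine application of Weierstrass' theorem. For $u\in A_\om(G)$ and $z$ in the closed strip $\overline{S_{\alp,\beta}}$ the series $\dpair{\theta^z}{u}=\sum_{\pi\in\what G}d_\pi\Tr\bigl(\hat u(\pi)\pi(\theta^z)\bigr)$ has terms dominated by $d_\pi\norm{\hat u(\pi)}_1\opnorm{\pi(\theta^z)}\le C\,d_\pi\norm{\hat u(\pi)}_1\om(\pi)$, whose sum is $C\fomenorm{u}<\infty$; so by the $M$-test the series converges uniformly on $\overline{S_{\alp,\beta}}$. Each partial sum is entire in $z$, since $z\mapsto\exp(iz\pi(X))$ is an entire $\fL(\fH_\pi)$-valued map and $T\mapsto\Tr(\hat u(\pi)T)$ is linear and continuous; hence the (locally uniform) limit $u_\theta(z)=\dpair{\theta^z}{u}$ is holomorphic on $S_{\alp,\beta}$.

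The only step that is more than bookkeeping is the vertical-invariance identity $\opnorm{\pi(\theta^z)}=\opnorm{\pi(\theta^{\re z})}$: this is exactly what forces the natural domain of holomorphy to be a vertical strip rather than, say, a disc, and it is here that positivity of $\theta$ is essential, via the hermitianity of $i\pi(X)$ and the fact that $\pi(\theta^{is})$ is unitary for real $s$. Everything else is uniform estimation against $\fomenorm{\cdot}$ together with Propositions~\ref{prop:GCproperties} and~\ref{prop:Gomegaproperties}.
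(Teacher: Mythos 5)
Your proof is correct and follows essentially the same route as the paper's: write $\theta=\exp(iX)$, use the unitary factor $\exp(-sX)$ to reduce everything to $\re z$, obtain a uniform bound on $\opnorm{\pi(\theta^{z})}/\om(\pi)$ over a strip, and conclude by uniform convergence of the trace series plus Weierstrass. The only cosmetic difference is that the paper takes $\alp,\beta$ to be the infimum and supremum of $\{s\in\Ree:\theta^s\in G_\om\}$ and bounds each $\pi$-component of $u$ by its $A_\om$-norm using contractivity of points of $G_\om$ as characters, whereas you fix the concrete strip $(0,1+\eps)$ and re-derive the uniform bound from log-convexity of the eigenvalues of $i\pi(X)$.
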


\begin{proof}
We let $\alp=\inf\{s\in\Ree:\theta^s\in G_\ome\}$ and
$\beta=\sup\{s\in\Ree:\theta^s\in G_\om\}$.
Proposition \ref{prop:GCproperties} (iv) provides $X\in i\gg$, 
for which $\exp(X)=\theta$.  We note for $z=s+it\iin S_{\alp,\beta}$,
that $\theta^z=\theta^s\exp(itX)\in G_\ome$.
Now, if $u\in\tripig$ then $u_\theta(z)=\Tr(\hat{u}(\pi)\exp(z\pi(X)))d_\pi$
defines a holomorphic function on $S_{\alp,\beta}$ for which
\[
\sup_{z\in S_{\alp,\beta}}|u_\theta(z)|\leq \sup_{\theta'\in G_\om}|\dpair{\theta'}{u}|
\leq\fomenorm{u}=\norm{\hat{u}}_1d_\pi\ome(\pi).
\]
Hence if we consider $u\iin A_\om(G)$, we see that
\[
u_\theta(z)=\sum_{\pi\in\what{G}}\Tr(\hat{u}(\pi)\pi(\theta)^z)d_\pi\ome(\pi)
\]
converges uniformly on $S_{\alp,\beta}$ and hence defines a holomorphic
function. 
\end{proof}

\begin{remark}\rm We will say that the unit $e$ of $G$
is a relative interior point of $G_\om^+$ if there is $\theta\iin G_\om^+\setdif\{e\}$
such that $\theta^{-\eps}\in G_\om^+$ for some $\eps>0$.
We note that if $\ome$ is symmetric and $G_\ome\supsetneq G$, then
it follows from  Corollary~\ref{prop:ggproperties1} if $\theta\in G_\om^+$ then $\theta^{-1}\in G_\ome^+$ showing that $e$ is a relative interior point of $G_\om^+$. 
Even in the case that $e$ is a relative interior point, the above procedure, applied to
$e$ produces only constant holomorphic functions.  If 
$\theta\iin G_\om^+\setdif\{e\}$ is a relative interior point, for which $\theta^{-\eps}\in G_\ome$,
the holomophic function $u_\theta$ satisfies $u_\theta(0)=u(e)$.
\end{remark}

\begin{definition}\rm
An involutive Banach algebra is called {\it symmetric} if for every
self-adjoint element $u$, the spectrum $\sigma(u)\subset\Ree$.
\end{definition}

If $\ome$ is a symmetric weight then $u\mapsto \bar{u}$ defines
an isometric involution on $A_\ome(G)$, where $\bar{u}(s)=\wbar{u(s)}$
for $s\iin G$.  Indeed, it is easy to check 
that for every $\pi\iin\what{G}$, and $u\iin\falomeg$ that
$\norm{\what{\bar{u}}(\pi)}_1=\norm{\hat{u}(\bar{\pi})}_1$.
It is then immediate from the definition of the norm that $\fomenorm{\bar{u}}=
\fomenorm{u}$.

\begin{theorem}\label{theo:symmetry}
Let $\ome$ be a symmetric weight on $G$.
The Beurling-Fourier algebra $\falomeg$ is symmetric if and only if
$G_\ome=G$.
\end{theorem}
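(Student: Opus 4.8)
The plan is to prove both implications by relating the symmetry of $\falomeg$ to the geometry of $G_\ome$ inside $G_\Cee$, using the fact that for a symmetric weight the involution $u\mapsto\bar u$ is isometric on $\falomeg$. First I would dispose of the easy direction: if $G_\ome=G$, then I want to show $\falomeg$ is symmetric. Take $u\in\falomeg$ self-adjoint, i.e.\ $\bar u=u$. The Gelfand transform sends $u$ to the function $\theta\mapsto\dpair{\theta}{u}$ on $G_\ome=G$, which is just $u$ itself as a continuous function on $G$. Since $\bar u=u$ means $\wbar{u(s)}=u(s)$ for all $s\in G$, the Gelfand transform of $u$ is real-valued, so $\sigma(u)=\wbar{u(G)}\subset\Ree$. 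This uses semisimplicity (Theorem~\ref{theo:semisimple}) so that the Gelfand transform is injective and $\sigma(u)$ equals the closure of the range of the transform. Hence $\falomeg$ is symmetric.

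For the converse I would argue by contraposition: assume $G_\ome\supsetneq G$ and produce a self-adjoint $u\in\falomeg$ whose spectrum is not contained in $\Ree$. Since $\ome$ is symmetric and $G_\ome\supsetneq G$, Proposition~\ref{prop:Gomegaproperties}(iii) and the remark following Theorem~\ref{theo:analyticstructure} give a relative interior point $\theta\in G_\ome^+\setdif\{e\}$ with $\theta^{-\eps}\in G_\ome^+$ for some $\eps>0$; so by Theorem~\ref{theo:analyticstructure} applied at this $\theta$ there is a strip $S_{\alp,\beta}$ with $\alp<0<\beta$ such that $z\mapsto u_\theta(z)=\dpair{\theta^z}{u}$ is holomorphic on $S_{\alp,\beta}$ for every $u\in\falomeg$. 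Now the key point: for self-adjoint $u$ (i.e.\ $\bar u=u$), I want to understand $u_\theta$ on the imaginary axis versus on the real axis. Using $\dpair{T^*}{u}=\wbar{\dpair{T}{u^*}}$ from Section~\ref{sec:abslietheory} and $u^*=\bar u$ when... more precisely, since $\theta\in G_\Cee^+$ we have $\theta^*=\theta$, so $(\theta^{it})^*=\theta^{-it}$, giving $\wbar{u_\theta(it)}=\wbar{\dpair{\theta^{it}}{u}}=\dpair{(\theta^{it})^*}{u^*}=\dpair{\theta^{-it}}{\bar u}=\dpair{\theta^{-it}}{u}=u_\theta(-it)$; whereas for real $s$, $\theta^s\in G_\ome^+$ and $\wbar{u_\theta(s)}=\dpair{\theta^s}{\bar u}=\dpair{\theta^s}{u}=u_\theta(s)$, so $u_\theta$ is real on the real part of the strip.

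The strategy is then to exhibit one $u\in\falomeg$ for which $u_\theta$ is nonconstant — then $u_\theta$ being real on a real interval and holomorphic on $S_{\alp,\beta}$ forces $u_\theta$ to take non-real values somewhere off the real axis (by the identity theorem / open mapping theorem, a nonconstant holomorphic function real on a segment of $\Ree$ cannot be real on a neighbourhood of that segment), and since the range of the Gelfand transform of $u$ contains $\{u_\theta(z):z\in S_{\alp,\beta}\cap$ (appropriate region)$\}\subset\sigma(u)$, we get $\sigma(u)\not\subset\Ree$. I expect the main obstacle to be twofold: first, making precise that the values $u_\theta(z)$ actually lie in $\sigma(u)$ — this needs that $\theta^z\in G_\ome$ for $z$ in a genuine two-dimensional region, not just the strip interior, which the remark and Theorem~\ref{theo:analyticstructure} do supply since $\theta^z=\theta^{\re z}\exp(i(\im z)X)$ and the unitary factor $\exp(i(\im z)X)$ keeps us in $G_\ome$ by $G$-Reinhardtness... actually one must check $\exp(i(\im z)X)\in G$, which holds as $X\in i\gg$ means $\exp(itX)$ — wait, here $X\in i\gg$ so $iX\in-\gg$ and $\exp(i t X)$... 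I need $\exp(isX)\in G$ for real $s$, equivalently $isX\in\gg$, i.e.\ $sX\in-i\gg=i\gg$, true; good, so $\theta^z\in G_\ome$ for all $z\in S_{\alp,\beta}$. Second, constructing a $u$ with $u_\theta$ nonconstant: since $\theta\ne e$, some $\pi(\theta)\ne\pi(e)=I$, so pick a matrix coefficient $u\in\tripig$ with $\hat u(\pi)$ chosen so that $\Tr(\hat u(\pi)\pi(\theta)^z)d_\pi$ depends genuinely on $z$ — concretely, since $\pi(\theta)=\pi(|\theta|)$ has an eigenvalue $\lam\ne1$, taking $\hat u(\pi)$ to be the rank-one projection onto that eigenvector makes $u_\theta(z)=d_\pi\lam^z$, which is nonconstant. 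This completes the plan.
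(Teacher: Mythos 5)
Your skeleton for the hard direction --- use Theorem \ref{theo:analyticstructure} to turn a relative interior point $\theta\in G_\ome^+\setminus\{e\}$ into a nonconstant holomorphic function $u_\theta$ whose values lie in $\sigma(u)$, then invoke the open mapping theorem --- is exactly the paper's, and the easy direction and the check that $\theta^z\in G_\ome$ for all $z$ in the strip are fine. But your ``key point'' is false. The identity from Section \ref{sec:abslietheory} is $\overline{\dpair{T}{v}}=\dpair{T^*}{v^*}$ with $v^*(s)=\overline{v(s^{-1})}$, and $v^*$ is \emph{not} the algebra involution $\bar v(s)=\overline{v(s)}$; you silently replace one by the other in the step $\dpair{(\theta^{it})^*}{u^*}=\dpair{\theta^{-it}}{\bar u}$. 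Consequently $u_\theta$ need not be real on the real axis for self-adjoint (real-valued) $u$: take $G=\Tee$, $\ome(n)=\lambda_0^{|n|}$ with $\lambda_0>1$, $u(z)=i(z-z^{-1})$ and $\theta=\lambda\in(1,\lambda_0]$; then $u=\bar u$ but $u_\theta(s)=i(\lambda^{s}-\lambda^{-s})$ is purely imaginary and nonzero for real $s\ne 0$. Luckily this claim is dispensable: a nonconstant holomorphic function on the strip has open image, and an open subset of $\Cee$ cannot lie in $\Ree$, so no reality on a segment is needed. (The example above even exhibits directly a self-adjoint element with non-real spectrum.)

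The genuine gap is elsewhere: symmetry of $\falomeg$ is a statement about \emph{self-adjoint} elements, so you must produce a self-adjoint $u$ with $u_\theta$ nonconstant, and your witness --- a single coefficient of one $\pi$ with $\hat u(\pi)$ a rank-one projection --- is in general not self-adjoint (for $G=\Tee$ it is $u(z)=z^n$). A non-self-adjoint element with non-real Gelfand values proves nothing about symmetry. The missing step is the one the paper supplies: write $2u=(u+\bar u)+(u-\bar u)$; since $v\mapsto v_\theta$ is linear and $u_\theta$ is nonconstant, one of the self-adjoint elements $u+\bar u$, $-i(u-\bar u)$ has nonconstant image under $v\mapsto v_\theta$, and the open mapping theorem applied to that element finishes the proof. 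With that one line added, and the spurious reality argument deleted, your proof coincides with the paper's.
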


\begin{proof}
If $G=G_\ome$, then it is obvious that $\falomeg$ is symmetric.

If $G_\om\supsetneq G$, then by Theorem \ref{theo:analyticstructure},
for any relative interior point $\theta\iin G^+_\om\setdif\{1\}$,
the function $u\mapsto u_\theta$ is a homomorphism from
$\falomeg$ into $\mathrm{Hol}(S_{\alp,\beta})$, the space
of holomorphic functions on an open strip $S_{\alp,\beta}$.
Since for $z\not=1$ but sufficiently close to $1$, $\theta^z\not=\theta$, there
is $u\iin\falomeg$ for which $\dpair{u}{\theta^z}\not=\dpair{u}{\theta}$.
Moreover, since $\falomeg$ is generated by its self-adjoint elements,
i.e.\ $2u=(u+\bar{u})+(u-\bar{u})$ for each $u$, there must be
a self-adjoint element $u$ for which $\dpair{u}{\theta^z}\not=\dpair{u}{\theta}$
for some $z$.  Hence $u_\theta$ is a non-constant holomorphic
function, whence $u_\theta(S_{\alp,\beta})$ is open in $\Cee$.
Since $u_\theta(S_{\alp,\beta})\subset\sigma(u)$, the latter cannot
be contained in $\Ree$.
\end{proof}

In the end of this section  we give general conditions on weights $\om$
for which the spectrum $G_\om$ of $A_\om(G)$ coincides with (is
different from) $G$.

\medskip

\noindent
{\bf Some functorial properties.}
The Beurling-Fourier algebras admit natural Beurling-Fourier algebras
when restricted to subgroups.  
If $H$ is a closed subgroup of $G$ and $\ome:\what{G}
\to\Ree^{>0}$ then we define $\ome_H:\what{H}\to\Ree^{>0}$ by
\[
\ome_H(\sig)=\inf_{\substack{\pi\in\what{G} \\ \sig\subset\pi|_H}}\ome(\pi).
\]

\begin{proposition}
$\ome_H$ is a weight on $\what{H}$.
\end{proposition}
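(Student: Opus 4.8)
The plan is to verify the two defining inequalities for a weight on $\what H$ directly from the definition of $\ome_H$ and the fact that $\ome$ is a weight on $\what G$. The only nontrivial point is submultiplicativity; positivity is immediate since each $\ome(\pi)>0$, and $\ome_H$ is well-defined because every $\sig\in\what H$ occurs in $\pi|_H$ for some $\pi\in\what G$ (for instance by Frobenius reciprocity, $\sig$ appears in $\ind_H^G(\sig)|_H$, which decomposes into irreducibles of $G$).

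First I would fix $\sig\in\what H$ together with $\rho,\rho'\in\what H$ such that $\sig\subset\rho\otimes\rho'$, and aim to show $\ome_H(\sig)\leq\ome_H(\rho)\ome_H(\rho')$. Since $\ome_H(\rho)$ and $\ome_H(\rho')$ are infima, it suffices to show that for every $\pi\in\what G$ with $\rho\subset\pi|_H$ and every $\pi'\in\what G$ with $\rho'\subset\pi'|_H$ we have $\ome_H(\sig)\leq\ome(\pi)\ome(\pi')$. The key observation is that restriction to $H$ commutes with tensor products: $(\pi\otimes\pi')|_H=\pi|_H\otimes\pi'|_H$. Hence from $\rho\subset\pi|_H$ and $\rho'\subset\pi'|_H$ we get $\rho\otimes\rho'\subset\pi|_H\otimes\pi'|_H=(\pi\otimes\pi')|_H$, and therefore $\sig\subset(\pi\otimes\pi')|_H$.

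Now decompose $\pi\otimes\pi'=\bigoplus_{\tau}\tau$ into irreducibles $\tau\in\what G$ (with multiplicity). Restricting to $H$ and using $\sig\subset(\pi\otimes\pi')|_H=\bigoplus_\tau \tau|_H$, there must be some irreducible summand $\tau_0$ of $\pi\otimes\pi'$ with $\sig\subset\tau_0|_H$. By the definition of $\ome_H$ as an infimum over all $\pi_0\in\what G$ containing $\sig$ in their restriction, we get $\ome_H(\sig)\leq\ome(\tau_0)$. On the other hand, $\tau_0\subset\pi\otimes\pi'$ and $\ome$ is a weight on $\what G$, so $\ome(\tau_0)\leq\ome(\pi)\ome(\pi')$. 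Combining, $\ome_H(\sig)\leq\ome(\pi)\ome(\pi')$, and taking the infimum over all admissible $\pi,\pi'$ yields $\ome_H(\sig)\leq\ome_H(\rho)\ome_H(\rho')$, completing the proof.

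The main (and essentially only) obstacle is the bookkeeping around subrepresentations of tensor products restricted to a subgroup: one needs the compatibility $(\pi\otimes\pi')|_H = \pi|_H\otimes\pi'|_H$ and the transitivity of the relation ``is a subrepresentation of'' through the chain $\sig\subset\rho\otimes\rho'\subset(\pi\otimes\pi')|_H$ and then through the decomposition of $\pi\otimes\pi'$ into $G$-irreducibles. None of this is deep, but it should be spelled out carefully since the weight inequality for $\ome$ is stated only for irreducible $\sig$ sitting inside $\pi\otimes\pi'$, so one genuinely must pass to an irreducible constituent $\tau_0$ of $\pi\otimes\pi'$ rather than working with the (possibly reducible) representation $\pi\otimes\pi'$ itself.
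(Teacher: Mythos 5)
Your proof is correct and follows essentially the same route as the paper: pass to $G$-representations $\pi,\pi'$ whose restrictions contain $\rho,\rho'$, use $(\pi\otimes\pi')|_H=\pi|_H\otimes\pi'|_H$ to get $\sig\subset(\pi\otimes\pi')|_H$, extract an irreducible constituent $\tau_0$ of $\pi\otimes\pi'$ with $\sig\subset\tau_0|_H$, and apply the weight inequality for $\ome$; the paper merely phrases the final optimization with an $\eps$-argument where you take infima directly, which is an equivalent bookkeeping choice.
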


\begin{proof} Let $\sig,\sig',\tau\in\what{H}$ with $\tau\subset\sig\otimes\sig'$ 
and $\eps>0$ be given.  
Find $\pi,\pi'\iin\what{G}$ such that
$\sig\subset\pi|_H$ with $\ome(\pi)<\ome_H(\sig)+\eps$ and
$\sig\subset\pi'|_H$ with $\ome(\pi')<\ome_H(\sig')+\eps$.  Then
$\tau\subset\pi\otimes\pi'|_H$ and hence
\[
\ome_H(\tau)\leq\inf_{\substack{\rho\subset\pi\otimes\pi' \\
\tau\subset\rho|_H}}\ome(\rho)
\leq\ome(\pi)\ome(\pi')\leq(\ome_H(\sig)+\eps)(\ome_H(\sig')+\eps).
\]
Since $\eps>0$ can be chosen arbitrarily and independently of
$\sig,\sig'$, it follows that
$\ome_H(\tau)\leq\ome_H(\sig)\ome_H(\sig')$.  \end{proof}

Let $\iota:H\to G$ denote the injection map which, as in Section \ref{sec:abslietheory},
extends to a homomorphism $\iota:\trih^\dagger\to\trig^\dagger$.

\begin{proposition}\label{prop:restrict}
{\bf (i)} The ``restriction'' map $u\mapsto u\comp\iota$ is a Banach
algebra quotient map from $\falomeg$ onto $\falomehh$.

{\bf (ii)} The spectrum $H_{\ome_H}$ of $\falomehh$ is isomorphic
to $G_\ome\cap \iota(H_\Cee)$.
\end{proposition}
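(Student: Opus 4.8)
The plan is to handle the two parts in sequence, using the tensor-decomposition picture of the Beurling-Fourier norm and the abstract Lie theory from Section \ref{sec:abslietheory}.

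For part (i), I would first recall that $u\mapsto u\comp\iota$ is the classical restriction map on trigonometric polynomials, which is well known to be surjective from $\trig$ onto $\trih$ and which dualizes to the $*$-homomorphism $\iota:\trih^\dagger\to\trig^\dagger$ discussed after Proposition \ref{prop:GCproperties}. The heart of the matter is the norm identity: for $v\in\trih$ one must show $\fomenorm[H]{v}=\inf\{\fomenorm{u}:u\comp\iota=v\}$, where I abbreviate the two Beurling norms. The inequality $\fomenorm[H]{u\comp\iota}\le\fomenorm{u}$ should follow by decomposing, for each $\pi\in\what G$, the restriction $\pi|_H$ into its $H$-irreducible pieces $\sigma\subset\pi|_H$ via isometries, computing $\what{u\comp\iota}(\sigma)$ as the corresponding compression of $\hat u(\pi)$, and using $\ome_H(\sigma)\le\ome(\pi)$ together with the fact that a trace-norm of a compression is dominated by the trace-norm of the operator (and $\sum_\sigma d_\sigma^{(H)}$-bookkeeping against $d_\pi$ works out since $\pi|_H$ as an $H$-representation has the same dimension). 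For the reverse inequality — realizing the infimum — I would take $v\in\trih$ a single matrix coefficient, pick for each $\sigma\subset v$ a $\pi\in\what G$ with $\sigma\subset\pi|_H$ and $\ome(\pi)$ nearly equal to $\ome_H(\sigma)$, and lift $v$ to the corresponding matrix coefficient of $\pi$ extended from $H$ to $G$ (using that a matrix coefficient of $\sigma$ sitting inside $\pi|_H$ extends to the matrix coefficient $\dpair{\pi(\cdot)\xi}{\eta}$ with $\xi,\eta$ in the $\sigma$-isotypic subspace); this lift has $A_\ome$-norm $\norm{\xi}\norm{\eta}\ome(\pi)$, which approximates $\norm\xi\norm\eta\,\ome_H(\sigma)=\fomenorm[H]{v}$. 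Passing to finite sums and then to the Banach-space completion gives that $u\mapsto u\comp\iota$ is a metric surjection, i.e.\ a Banach algebra quotient map. The main obstacle here is the reverse (infimum-attaining) estimate: one has to be careful that when $v$ is a sum of several matrix coefficients the individual lifts live in representations $\pi$ that may differ, so the lift of $v$ is genuinely a sum in $\etrig$ and one must check the norms add up correctly and that nothing is lost in passing to the completion; an $\eps$-argument with a countable exhaustion of $\what H$ handles this.

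For part (ii), I would argue that the quotient map of (i) dualizes to an isometric embedding $\falomehh^*\hookrightarrow\falomeg^*$ which, on the level of spectra, sends a character of $\falomehh$ to a character of $\falomeg$; concretely, a quotient of Banach algebras induces a homeomorphic embedding of maximal ideal spaces onto the hull of the kernel ideal. So $H_{\ome_H}$ embeds as a closed subset of $G_\ome$. It remains to identify the image. By the description \eqref{eq:spectrum}, $G_\ome\subset G_\Cee$ and $H_{\ome_H}\subset H_\Cee$; the embedding is exactly the restriction of the group homomorphism $\iota:H_\Cee\to G_\Cee$ from Section \ref{sec:abslietheory} (since both are defined by the same evaluation-against-$\trig$/$\trih$ pairing, compatibly with $u\mapsto u\comp\iota$). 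Thus the image is $\iota(H_\Cee)\cap\{\theta\in G_\Cee:\sup_\pi\opnorm{\pi(\theta)}/\ome(\pi)\le 1\}=\iota(H_\Cee)\cap G_\ome$. The one direction needing care is that $\iota$ is injective on $H_\Cee$, so that "$\iota(H_\Cee)\cap G_\ome$" faithfully records $H_{\ome_H}$: this follows because $\iota:\trih^\dagger\to\trig^\dagger$ restricted to $H$ is the identity (an element of $H$ is determined by its values on $\trih$, and these are recovered from $\trig$), and a grouplike element of $H_\Cee$ is determined by its polar parts as in Proposition \ref{prop:GCproperties}(iv), each of which lands faithfully. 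Finally one checks the norm condition matches: for $\theta=\iota(\eta)$ with $\eta\in H_\Cee$, $\sup_{\pi\in\what G}\opnorm{\pi(\theta)}/\ome(\pi)<\infty$ iff $\sup_{\sigma\in\what H}\opnorm{\sigma(\eta)}/\ome_H(\sigma)<\infty$ — the $\le$ direction uses that each $\sigma(\eta)$ is a compression of some $\pi(\theta)$ with $\ome(\pi)$ close to $\ome_H(\sigma)$, and the $\ge$ direction uses $\ome_H(\sigma)\le\ome(\pi)$ whenever $\sigma\subset\pi|_H$ together with $\opnorm{\sigma(\eta)}\le\opnorm{\pi(\theta)}$ for $\sigma\subset\pi|_H$. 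This yields exactly $H_{\ome_H}\cong G_\ome\cap\iota(H_\Cee)$.

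I expect the computational core — the compression estimates relating $\hat u(\pi)$ to $\what{u\comp\iota}(\sigma)$ and their $\|\cdot\|_1$- and operator-norm behaviour under restriction of representations — to be the part that requires the most care, essentially a bookkeeping exercise with the decomposition of $\pi|_H$ into $H$-irreducibles, parallel to the tensor-decomposition computation in the proof of Proposition \ref{prop:fomeisalg}.
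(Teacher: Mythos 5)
Your proposal is correct, but for part (i) it runs the argument on the opposite side of the duality from the paper. The paper constructs no lifts at all: it shows that the adjoint map $\iota|_{\falomehh^*}:\falomehh^*\to\falomeg^*$ is an isometry, via the single computation $\dfomenorm{\iota(T)}=\sup_{\pi\in\what{G}}\max_{\sig\subset\pi|_H}\opnorm{\sig(T)}/\ome(\pi)$, which is $\leq\sup_{\sig\in\what{H}}\opnorm{\sig(T)}/\ome_H(\sig)$ because $\ome_H(\sig)\leq\ome(\pi)$ whenever $\sig\subset\pi|_H$, and is $\geq$ that supremum up to $\eps$ by near-attainment of the infimum defining $\ome_H$; the quotient property of the preadjoint $u\mapsto u|_H$ then follows from general duality together with density of $\trig$. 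Your primal version --- contractivity of restriction plus an $\eps$-optimal lift of each coefficient $(\sig(\cdot)\xi,\eta)$ to $(\pi(\cdot)V\xi,V\eta)$ with $V:\fH_\sig\to\fH_\pi$ an $H$-equivariant isometry --- rests on exactly the same two facts about $\ome_H$ and is sound, but it costs more bookkeeping: the contractivity estimate needs the full pinching inequality $\sum_i\norm{P_iAP_i}_1\leq\norm{A}_1$ over all the block projections of $\pi|_H$ (a single compression contracting the trace norm is not quite enough), and the passage from $\trig$ to the completion needs the successive-approximation argument you allude to. For part (ii) the two arguments essentially coincide: both identify the dual of the quotient map with $\iota|_{H_\Cee}$ and match the norm conditions. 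The paper simply quotes Cartwright--McMullen for the fact that $\iota|_{H_\Cee}:H_\Cee\to G_\Cee$ is a topological embedding and then reads $\iota(H_{\ome_H})=\iota(H_\Cee)\cap G_\ome$ off the isometry already established in (i), whereas you re-derive the equivalence of the two norm conditions on grouplike elements by hand; your observation that injectivity of $\iota$ on $\trih^\dagger$ follows from surjectivity of $u\mapsto u|_H$ on trigonometric polynomials is a clean substitute for the citation. The net effect is the same proposition; the paper's dual route is shorter because the isometry of the adjoint does double duty, yielding both the quotient property in (i) and the identification of the spectrum in (ii).
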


\begin{proof}  {\bf (i)}
The extension $\iota:\trih^\dagger\to\trig^\dagger$, satisfies
\[
\pi\comp\iota(T)\simeq\bigoplus_{\substack{\sig\in\what{H} \\ \sig\subset\pi|_H}}
\sig(T)^{\oplus m(\sig,\pi)}
\]
where $m(\sig,\pi)$ is the multiplicity of $\sig$ in $\pi|_H$.
We have that $\iota\bigl(\falomehh^*\bigr)\subset\falomeg^*$.
Indeed $\sig\subset\pi|_H$ implies $\ome_H(\sig)\leq\ome(\pi)$ and hence
$\frac{\opnorm{\sig(T)}}{\ome_H(\sig)}\geq\frac{\opnorm{\sig(T)}}{\ome(\pi)}$
so we have
\[
\dfomenorm{\iota(T)}=\sup_{\pi\in\what{G}}
\frac{\opnorm{\pi\comp\iota(T)}}{\ome(\pi)}=\sup_{\pi\in\what{G}}
\max_{\substack{\sig\in\what{H} \\ \sig\subset\pi|_H}}\frac{\opnorm{\sig(T)}}{\ome(\pi)}
\leq\sup_{\sig\in\what{H}}\frac{\opnorm{\sig(T)}}{\ome_H(\sig)}.
\]
Hence $\iota|_{\falomehh^*}:\falomehh^*\to\falomeg^*$ is
contractive. Moreover, $\iota|_{\falomehh^*}$ is an isometry.  Indeed, given
$T\iin\falomehh^*$ and $\eps>0$, there is $\sig\iin\what{H}$ such
that $\frac{\opnorm{\sig(T)}}{\ome_H(\sig)}>\dfomehnorm{T}-\eps$.
Moreover there is $\pi\in\what{G}$ such that $\sig\subset\pi|_H$ and
$\ome(\pi)<\ome_H(\sig)+\eps$.  Then
\[
\fomenorm{\iota(T)}\geq\frac{\opnorm{\sig(T)}}{\ome(\pi)}
>\frac{\opnorm{\sig(T)}}{\ome_H(\sig)+\eps}
\geq\bigl(\dfomehnorm{T}-\eps\bigr)\frac{1}{1+\frac{\eps}{\ome_H(\sig)}}
\]
from which it follows that $\iota|_{\falomehh^*}$ is an isometry.

Since $\trig$ is dense in $\falomeg$, it follows that the preadjoint
$u\mapsto u\comp \iota=u|_H$ of $\iota|_{\falomehh^*}$ extends to a quotient
map from $\falomeg$ onto $\falomehh$.

{\bf (ii)} It is noted in \cite[Cor.\ 3]{cartwrightm} that the 
map $\iota|_{H_\Cee}:H_\Cee\to G_\Cee$ is a topological embedding.
Also, $H_{\ome_H}=H_\Cee\cap\falomehh^*$.
Hence $\iota(H_{\ome_H})= G_\Cee\cap\falomeg^*=G_\ome$.
\end{proof}

The connected component of the identity warrants particular consideration.

\begin{corollary}\label{cor:connectedcomp}
The connected component of $G_\om$ containing
$e$, $(G_\ome)_e$ is naturally isomorphic with $(G_e)_{\ome_{G_e}}$.  
In particular, in the case that $\ome$ is bounded,
$G_\ome\supsetneq G$ if and only if $(G_\ome)_e\supsetneq G_e$.
\end{corollary}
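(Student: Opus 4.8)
The plan is to deduce both assertions from Proposition~\ref{prop:restrict}(ii) applied to the closed subgroup $H=G_e$. First I would observe that $G_e$ is a closed normal subgroup of $G$, so the previous proposition furnishes a weight $\ome_{G_e}$ on $\what{G_e}$ and an identification of spectra $H_{\ome_H}\simeq G_\ome\cap\iota(H_\Cee)$, where $\iota:G_e\to G$ is the inclusion. Thus $(G_e)_{\ome_{G_e}}\simeq G_\ome\cap\iota((G_e)_\Cee)$, and by \cite[Cor.\ 3]{cartwrightm} the map $\iota|_{(G_e)_\Cee}$ is a topological embedding of $(G_e)_\Cee$ into $G_\Cee$. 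So it remains to check that, as a topological subspace of $G_\Cee$, the set $\iota((G_e)_\Cee)$ is exactly the connected component $(G_\Cee)_e$ of the identity in $G_\Cee$, and that intersecting with $G_\ome$ then picks out precisely the connected component of $e$ in $G_\ome$.

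For the first point I would use Proposition~\ref{prop:GCproperties}(iv): $G_\Cee\simeq G\times\gg$ homeomorphically via $(s,X)\mapsto s\exp(iX)$, and the analogous decomposition holds for $(G_e)_\Cee\simeq G_e\times\gg$ since $G_e$ and $G$ share the same Lie algebra $\gg$ (being an open, hence full-dimensional, subgroup). Under these identifications $\iota((G_e)_\Cee)$ corresponds to $G_e\times\gg$ inside $G\times\gg$, which is open (as $G_e$ is open in $G$) and closed (as $G_e$ is closed) and connected (product of connected sets, $\gg$ being a real vector space), hence is exactly $(G_\Cee)_e$. This already gives $(G_\Cee)_e\simeq(G_e)_\Cee$, recovering \cite[Prop.\ 9]{cartwrightm}, which I may simply cite if preferred.

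For the second point, set $K=(G_\Cee)_e$. Then $G_\ome\cap K$ is a closed neighbourhood of $e$ in $G_\ome$; more precisely, since $G_\Cee$ is a topological group and $K$ is an open normal subgroup, $G_\ome\cap K$ is open in $G_\ome$, and its complement $G_\ome\setminus K$ is a union of cosets of $K$ and hence also open (the cosets $sK$ for $s$ ranging over coset representatives are each open). Therefore $G_\ome\cap K$ is both open and closed in $G_\ome$ and contains $e$; and it is connected, because $G_\ome\cap K\simeq(G_e)_{\ome_{G_e}}$ and the latter is connected by Proposition~\ref{prop:Gomegaproperties} (it is a homeomorphic image $(G_e)_{\ome_{G_e}}^+\times G_e$ of a product of two connected sets, the first being logarithmically star-like about $e$ hence path-connected). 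A clopen connected set containing $e$ is exactly the connected component $(G_\ome)_e$. Combining, $(G_\ome)_e=G_\ome\cap K\simeq(G_e)_{\ome_{G_e}}$, which is the first assertion.

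Finally, for the ``in particular'' clause, assume $\ome$ is bounded. If $(G_\ome)_e\supsetneq G_e$ then trivially $G_\ome\supsetneq G$. Conversely, suppose $G_\ome\supsetneq G$; pick $\theta\in G_\ome\setminus G$ and write $\theta=s\exp(iX)$ with $s\in G$, $X\in\gg$, $X\neq 0$ (else $\theta\in G$). By the $G$-Reinhardt property (Proposition~\ref{prop:Gomegaproperties}(i)), $s^{-1}\theta=\exp(iX)\in G_\ome$; and $\exp(iX)\in(G_\Cee)_e=K$ since it lies on the one-parameter subgroup $t\mapsto\exp(itX)$ through $e$. Hence $\exp(iX)\in G_\ome\cap K=(G_\ome)_e$, while $\exp(iX)\notin G$ (as $X\neq0$ and the map $(s,X)\mapsto s\exp(iX)$ is injective, with $G$ corresponding to $X=0$), so in particular $\exp(iX)\notin G_e$. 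Thus $(G_\ome)_e\supsetneq G_e$, completing the proof. I expect the main subtlety to be the bookkeeping ensuring $G_e$ and $G$ have the same Lie algebra $\gg$ so that the two $G_\Cee$-style decompositions are compatible under $\iota$; everything else is a routine clopen-component argument. \endpf
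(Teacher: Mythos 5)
Your overall strategy is the same as the paper's: apply Proposition \ref{prop:restrict}(ii) with $H=G_e$, identify $\iota((G_e)_\Cee)$ with $(G_\Cee)_e$, and use the polar decomposition $G_\ome\simeq G_\ome^+\times G$ to control connected components; the ``in particular'' clause is also handled correctly. However, one step fails as written: you assert that $G_e$ is open in $G$ (``being an open, hence full-dimensional, subgroup'') and consequently that $K=(G_\Cee)_e$ is an open subgroup of $G_\Cee$, which is the hypothesis your clopen argument for $(G_\ome)_e=G_\ome\cap K$ rests on. For a general compact group $G_e$ is closed but need not be open: $G/G_e$ is profinite, not necessarily finite (take $G=\Tee\times\prod_{n\in\En}\Zee_2$, say). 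Since the whole point of the abstract complexification is to treat non-Lie compact groups, this is a genuine gap rather than a cosmetic one. The same false premise is invoked to justify $\gg_{G_e}=\gg$; that identity is true, but because every one-parameter subgroup $t\mapsto\exp(tX)$ has connected image and therefore lies in $G_e$, not because $G_e$ is open.

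The gap is easily repaired, and the repair is essentially what the paper does. One inclusion needs no openness at all: $(G_\ome)_e$ is a connected subset of $G_\Cee$ containing $e$, hence $(G_\ome)_e\subseteq(G_\Cee)_e=K$, so $(G_\ome)_e\subseteq G_\ome\cap K$. For the reverse inclusion you only need that $G_\ome\cap K\simeq(G_e)_{\ome_{G_e}}\simeq(G_e)^+_{\ome_{G_e}}\times G_e$ is connected and contains $e$ --- which you already establish via logarithmic star-likeness --- so it is contained in $(G_\ome)_e$. Equivalently, the paper argues directly on the homeomorphism $G_\ome^+\times G\simeq G_\ome$: the component of the identity in a product is the product of the components, and $G_\ome^+$ is connected, so $(G_\ome)_e=G_\ome^+G_e$, which is then matched with $G_\ome\cap\iota((G_e)_\Cee)=(G_e)_{\ome_{G_e}}$. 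Replacing your clopen paragraph with either of these arguments completes the proof.
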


\proof  It follows from Proposition \ref{prop:Gomegaproperties} (ii) that
$G_\om^+\subset (G_\ome)_e$.  The same proposition shows that
$(\theta,s)\mapsto\theta s:G_\ome^+\times G\to G_\ome$ is a homeomorphism,
and hence $G_\ome^+ G_e= (G_\ome)_e$.  However, since
$G_\Cee^+ G_e=\iota(G_e)_\Cee$, we get, from Proposition \ref{prop:restrict} above, that
$G_\ome^+G_e=G_\om\cap\iota(G_e)_\Cee=(G_e)_{\ome_{G_e}}$.

If $\ome$ is bounded, 
we have $G_\ome\supsetneq G$ if and only if $G_\om^+\supsetneq\{e\}$.
Hence this condition is equivalent to $(G_\ome)_e\supsetneq G_e$.  \endpf

Let $N$ be a normal subgroup of $G$ and $q:G\to G/N$ be the quotient map.
The map $\pi\mapsto\pi\comp q:\what{G/N}\to\what{G}$ clearly preserves
decomposition into irreducible components.
Thus if $\ome:\wh{G}\to\Ree^{>0}$ is a weight
we may define a weight $\ome^N:\what{G/N}\to\Ree^{>0}$ by
\[
\ome^N(\pi)=\ome(\pi\comp q).
\]
As above, we let $\iota:N\to G$ denote the injection which extends naturally
to a map $\iota:\tri{N}^\dagger\to\trig^\dagger$.  We note that since $N$ is normal in $G$,
$\n=\{X\in\gg:\exp(tX)\in N\text{ for all }t\in\Ree\}$ is a Lie ideal in
$\gg$, whence $\n_\Cee$ is a Lie ideal in $\gg_\Cee$, from which it follows
that $N_\Cee\cong\iota(N_\Cee)=N\exp(i\n)$ is normal in $G_\Cee$.

\begin{proposition}\label{prop:quotient}
{\bf (i)} The map $u\mapsto u\comp q:A_{\om^N}(G/N)\to\falomeg$ is an isometric
homomorphism.

{\bf (ii)}  On $G_\om$, let $\theta\sim_{N_\Cee}\theta'$ if $\theta^{-1}\theta'\in\iota(N_\Cee)$ 
Then the quotient space $G_\om/N_\Cee$ 
may be identified with a closed subset of $(G/N)_{\ome^N}$.
\end{proposition}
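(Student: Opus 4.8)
The plan is to proceed exactly in parallel with the treatment of closed subgroups in Proposition~\ref{prop:restrict}, using the injection $\iota:N\to G$ in place of the injection of a general closed subgroup, together with the observation already recorded above that $N_\Cee\cong\iota(N_\Cee)=N\exp(i\n)$ is \emph{normal} in $G_\Cee$, which is what lets us pass to the quotient group $G_\Cee/N_\Cee$.

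For part {\bf (i)}: the map $q:G\to G/N$ extends, as in Section~\ref{sec:abslietheory}, to a $*$-homomorphism $q:\tri{G/N}^\dagger\to\trig^\dagger$; concretely, for $\pi\in\what{G/N}$ one has $(\pi\comp q)(T)=\pi(T)$ under the inclusion $\what{G/N}\hookrightarrow\what{G}$, $\pi\mapsto\pi\comp q$. The key numerical point is that this inclusion preserves irreducible decompositions, so by the very definition $\ome^N(\pi)=\ome(\pi\comp q)$ we get, for $T\in\trig^\dagger$ supported (as an operator field) on the image of $\what{G/N}$,
\[
\sup_{\pi\in\what{G/N}}\frac{\opnorm{\pi(T)}}{\ome^N(\pi)}
=\sup_{\pi\in\what{G/N}}\frac{\opnorm{(\pi\comp q)(T)}}{\ome(\pi\comp q)}.
\]
Hence the adjoint map identifies $A_{\om^N}(G/N)^*$ isometrically with the weak-$*$-closed subspace of $\falomeg^*$ of operator fields vanishing off $\what{G/N}$; taking preadjoints, $u\mapsto u\comp q$ is an isometry of $A_{\om^N}(G/N)$ into $\falomeg$, and it is a homomorphism because $q$ intertwines the pointwise products (equivalently, $u\comp q$ is just the function $u$ pulled back, and pullback along $q$ respects pointwise multiplication). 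This part is routine.

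For part {\bf (ii)}: from part~{\bf (i)}, $A_{\om^N}(G/N)^*\hookrightarrow\falomeg^*$ isometrically, hence on the level of spectra, intersecting with the respective group-likes, $(G/N)_{\ome^N}=(G/N)_\Cee\cap A_{\om^N}(G/N)^*$ sits inside $G_\Cee\cap\falomeg^*=G_\om$ — more precisely, the pullback $q^*:(G/N)_\Cee\to G_\Cee$ is a topological embedding with image $G_\Cee/N_\Cee$ (this is the dual statement to surjectivity of $q:G_\Cee\to(G/N)_\Cee$, which follows from \cite[Cor.\ 3]{cartwrightm} applied to the quotient, or directly from $(G/N)_\Cee\cong G_\Cee/N_\Cee$), and it carries $(G/N)_{\ome^N}$ onto $q^*\bigl((G/N)_\Cee\bigr)\cap G_\om$. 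Now a character $\theta\in G_\om$ factors through $G_\om/N_\Cee$ — i.e.\ is constant on $\sim_{N_\Cee}$-classes — precisely when $\theta$ annihilates, in the appropriate multiplicative sense, the ideal generated by $\{u\comp q - u(e)\cdot 1 : \text{corrections}\}$; more directly, $\theta\in q^*\bigl((G/N)_\Cee\bigr)$ iff $\theta(\iota(n))=1$ for all $n\in N_\Cee$, and since $\iota(N_\Cee)$ is normal this is exactly the condition defining a single $\sim_{N_\Cee}$-class, so the quotient $G_\om/N_\Cee$ is a well-defined set on which the characters in $q^*\bigl((G/N)_\Cee\bigr)\cap G_\om$ are in bijection with a subset of $(G/N)_{\ome^N}$. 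Closedness of the image in $(G/N)_{\ome^N}$ follows since $G_\om$ is compact (Proposition~\ref{prop:Gomegaproperties}, noting $\om^N$ bounded when $\om$ is) and $q^*$ is a homeomorphism onto its image, so $q^*{}^{-1}(G_\om)$ is compact, hence closed in $(G/N)_{\ome^N}$.

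The main obstacle is the second half of {\bf (ii)}: making precise that the quotient relation $\sim_{N_\Cee}$ on $G_\om$ is exactly the fibre relation of $q^*{}^{-1}$, i.e.\ that $\theta^{-1}\theta'\in\iota(N_\Cee)$ together with $\theta,\theta'\in G_\om$ forces $q^*{}^{-1}\theta=q^*{}^{-1}\theta'$ to land in $(G/N)_{\ome^N}$ rather than merely in $(G/N)_\Cee$. This needs the compatibility of the weights: if $\theta=\theta' n$ with $n\in\iota(N_\Cee)$ and $\theta\in G_\om$, one must check the image character $\bar\theta\in(G/N)_\Cee$ satisfies $\opnorm{\pi(\bar\theta)}\le\ome^N(\pi)$ for all $\pi\in\what{G/N}$ — but $\pi(\bar\theta)=(\pi\comp q)(\theta)$ and $\ome^N(\pi)=\ome(\pi\comp q)$, so this is immediate from $\theta\in G_\om$. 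Once this identification is in hand the rest is bookkeeping, and I would present it compactly rather than belabouring the point-set topology.
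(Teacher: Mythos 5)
Your part (i) is essentially correct and in fact more direct than the paper's argument. The paper constructs the averaging projection $P_Nu(s)=\int_N u(sn)\,dn$, identifies its range with the functions whose transforms vanish off $\what{G/N}\comp q$ (this is where normality of $N$ is used, to show $\pi|_N$ contains no nonzero $N$-fixed vector when $\pi\notin\what{G/N}\comp q$), and reads the isometry off that description. Your route --- $\what{u\comp q}$ is supported on $\what{G/N}\comp q$ by Schur orthogonality, with $\what{u\comp q}(\pi\comp q)=\hat{u}(\pi)$, $d_{\pi\comp q}=d_\pi$ and $\ome(\pi\comp q)=\ome^N(\pi)$ --- gives the same conclusion without the projection. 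One small caution: the adjoint of $u\mapsto u\comp q$ is the restriction map $T\mapsto (T_{\pi\comp q})_{\pi\in\what{G/N}}$, which is a metric surjection; the extension-by-zero map you describe is its isometric section, not the adjoint itself.

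Part (ii) has a genuine gap. The statement requires the induced map $G_\ome/\sim_{N_\Cee}\to (G/N)_{\ome^N}$ to be \emph{injective}, and that comes down to showing that the kernel of the extended map $q|_{G_\Cee}:G_\Cee\to(G/N)_\Cee$ is exactly $\iota(N_\Cee)$. The easy inclusion $\iota(N_\Cee)\subset\ker{q|_{G_\Cee}}$ only gives well-definedness on the quotient; the reverse inclusion is the step that needs an argument, and the paper supplies one: write $\theta=s|\theta|$, use uniqueness of polar decomposition to get $q(s)=e=q(|\theta|)$, hence $s\in N$, then write $|\theta|=\exp(iX)$ and differentiate $t\mapsto q(\exp(itX))$ to conclude $X\in\n$. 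You assert the fibre condition without proving it, and the obstacle you do address (that $q(\theta)$ satisfies the $\ome^N$-growth bound, which is immediate from $\ome^N(\pi)=\ome(\pi\comp q)$) is the easy half. Separately, your map ``$q^*:(G/N)_\Cee\to G_\Cee$'' points the wrong way: there is no canonical section of $q|_{G_\Cee}$, and your claim that it carries $(G/N)_{\ome^N}$ \emph{onto} $q^*((G/N)_\Cee)\cap G_\ome$ would mean every element of $(G/N)_{\ome^N}$ lifts to $G_\ome$ --- that is precisely Conjecture \ref{conj:quot}, which the paper leaves open (surjectivity of $G_\Cee\to(G/N)_\Cee$ is known, but it is not known that lifts can be chosen to respect the growth condition defining $G_\ome$). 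What is actually needed, and what you do verify at the end, is only the containment $q(G_\ome)\subset(G/N)_{\ome^N}$; closedness of the image then follows from weak-$*$ compactness of $G_\ome$ and continuity of $q$, as you indicate.
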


\begin{proof}{\bf (i)} 
Define $P_Nu(s)=\int_N u(sn)dn$.  Then, since  by Proposition~\ref{transl} translations
are isometries on $\falomeg$, we have that $P_N$ defines a bounded linear operator
on $\falomeg$.  Moreover, $P_N^2=P_N$ and $P_N\bigl(\falomeg\bigr)=
\falome{G\!:\!N}$, the subalgebra of elements constant n cosets of $N$.  
It remains to prove the latter is isometrically isomorphic
to $\falomegn$.

Let us note that if $\pi\in\what{G}\setdif(\what{G/N}\comp q)$
then $\pi|_N$ never contains the trivial representation of $N$.  Indeed if for
$\xi\in\fH_\pi$ we have $\pi(n)\xi=\xi$ for all $n\iin N$, then for any $s\iin G$
and any $n\iin N$, we have $\pi(n)\pi(s)\xi=\pi(s)\pi(s^{-1}ns)\xi=\pi(s)\xi$.
Then either $\xi=0$, or $\xi$ is a cyclic vector for $\pi(G)$, in which case
$\pi(n)=I$ for all $n\iin N$, but this 
contradicts our assumption about $\pi$.

Thus if $u\in\falomeg$ we have for $\pi\iin\what{G}$, $s\iin G$
\begin{align*}
P_N\Tr(\hat{u}(\pi)\pi(s))&=\Tr\left(\hat{u}(\pi)\pi(s)\int_N\pi(n)dn\right) \\
&=\begin{cases} 0 &\iif \pi\not\in\what{G/N}\comp q \\
\Tr(\hat{u}(\pi)\pi(s)) &\iif \pi\in\what{G/N}\comp q \end{cases}
\end{align*}
by the Schur orthogonality relations.  Thus
\[
\falome{G\!:\!N}=\left\{u\in\etrig:
\begin{matrix} \hat{u}(\pi)=0\ffor\pi\in\what{G}\setdif\what{G/N}\comp q
\aand \\ \sum_{\pi\in\what{G/N}\comp q}\norm{\hat{u}(\pi)}_1d_\pi\ome(\pi)<\infty
\end{matrix}\right\}
\]
which is clearly isometrically isomorphic to $\falomegn$.

{\bf (ii)}  We consider the extended map $q:\trig^\dagger\to\tri{G/N}^\dagger$.
We have that $\ker q|_{G_\Cee}=\iota(N_\Cee)$.  Indeed, we note that
since $q(N)=\{e\}$, $q(\n)=\{0\}$, and hence $q\comp\iota(N_\Cee)=q(N\exp(i\n))
=\{e\}$.  Conversely, if $\theta=s|\theta|\in \ker q|_{G_\Cee}$, then
$q(s)q(|\theta|)$ is the polar decomposition of $q(\theta)=e$, hence
$q(s)=e=q(|\theta|)$.  Thus $s\in N$.  Moreover we write $|\theta|=\exp(iX)$
for some $X\iin\gg$.  We see for $t,t_0\in\Ree$ that $e=q(|\theta|)^{tt_o}=q(\exp(itt_0X))
=\exp(itq(t_0X))$, and, taking derivative at $t=0$ we obtain that $iq(t_0X)=0$.
Thus we see that $X\in\n$, hence $\theta=s\exp(iX)\in \iota(N_\Cee)$.

Now, from (i) above, $q(G_\ome)$ will be a closed subset of $(G/N)_{\ome^N}$.
We see that for $\theta,\theta'\iin G_\ome\subset G_\Cee$, $q(\theta)=q(\theta')$
if and only if $q(\theta^{-1}\theta')\in\iota(N_\Cee)$, i.e.\ $\theta\sim_{N_\Cee}
\theta'$.  
%
\end{proof}

That $q:G\to G/N$ extends to an open quotient map $q|_{G_\Cee}:G_\Cee\to
(G/N)_\Cee$ is noted in \cite[Cor.\ 3]{cartwrightm} and requires the somewhat
delicate lifting one-parameter subgroup result \cite[Theo.\ 4]{mckennon}.  It is 
unclear that this result preserves the rate of growth of positive elements.

\begin{conjecture}\label{conj:quot}
In (ii), above, we have that $G_\ome/\sim_{N_\Cee}=(G/N)_{\ome^N}$.  In particular,
if for some Lie quotient $G/N$ of $G$, $(G/N)_{\ome^N}\supsetneq G/N$, then
$G_\ome\supsetneq G$.
\end{conjecture}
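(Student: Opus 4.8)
I would first reduce the asserted equality $G_\om/\sim_{N_\Cee}=(G/N)_{\om^N}$ to a lifting statement for positive elements; throughout, assume $\om$ is bounded, so that $G\subseteq G_\om$. By Proposition~\ref{prop:quotient}, $q$ identifies $G_\om/\sim_{N_\Cee}$ with the closed subset $q(G_\om)$ of $(G/N)_{\om^N}$, so only the inclusion $q(G_\om)\supseteq(G/N)_{\om^N}$ needs proof. Given $\bar\theta\in(G/N)_{\om^N}$, polar decomposition (Proposition~\ref{prop:GCproperties}(i) together with Proposition~\ref{prop:Gomegaproperties}) gives $\bar\theta=\bar s\,|\bar\theta|$ with $\bar s\in G/N$ and $|\bar\theta|\in(G/N)_{\om^N}^+$; picking $s\in G$ with $q(s)=\bar s$ and using that $G_\om$ is $G$-Reinhardt (Proposition~\ref{prop:Gomegaproperties}(i)), it suffices to lift $|\bar\theta|$ to some $\theta_0\in G_\om^+$ with $q(\theta_0)=|\bar\theta|$. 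Thus the conjecture reduces to proving $q\bigl(G_\om^+\bigr)=(G/N)_{\om^N}^+$.

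Next I would pass to Lie algebras. By Proposition~\ref{prop:GCproperties}(iv), $X\mapsto\exp(iX)$ is a homeomorphism of $\gg$ onto $G_\Cee^+$ and of $\gg/\n$ (the Lie algebra of $G/N$) onto $(G/N)_\Cee^+$, and since $q(\exp iX)=\exp(iq(X))$ these homeomorphisms intertwine $q$. Writing
$$V=\{X\in\gg:\exp(iX)\in G_\om\},\qquad \bar V=\{\bar X\in\gg/\n:\exp(i\bar X)\in(G/N)_{\om^N}\},$$
the claim becomes $q(V)=\bar V$, where $q\colon\gg\to\gg/\n$ is the quotient map, surjective by \cite[Theo.~4]{mckennon}. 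Now $V$ is compact, being the image of the compact set $G_\om^+$, and convex: $\exp(iX)\in G_\om$ if and only if $\lambda_{\max}(i\pi(X))\le\log\om(\pi)$ for every $\pi\in\what G$, and each $X\mapsto\lambda_{\max}(i\pi(X))$ is convex (as $\lambda_{\max}(A+B)\le\lambda_{\max}(A)+\lambda_{\max}(B)$ for hermitian $A,B$), so $V$ is an intersection of convex sublevel sets; likewise $\bar V$. Hence $q(V)$ is a compact convex subset of $\bar V$, and by Hahn--Banach $q(V)=\bar V$ is equivalent to the support-function inequality
$$\sup_{\bar X\in\bar V}\bar\phi(\bar X)\ \le\ \sup_{X\in V}\bar\phi(q(X))\qquad\text{for all }\bar\phi\in(\gg/\n)^*,$$
the reverse being immediate from $q(V)\subseteq\bar V$.

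It remains to prove this inequality, equivalently: every $\bar X\in\bar V$ admits a lift $X\in\gg$, $q(X)=\bar X$, with $\exp(iX)\in G_\om$. The constraints $\lambda_{\max}(i\pi(X))\le\log\om(\pi)$ split in two. For $\pi=\bar\pi\comp q$ with $\bar\pi\in\what{G/N}$ one has $\pi(X)=\bar\pi(\bar X)$ and $\om(\pi)=\om^N(\bar\pi)$, so these hold for \emph{every} lift precisely because $\bar X\in\bar V$; the difficulty is the $\pi\in\what G\setdif(\what{G/N}\comp q)$ --- those whose restriction to $N$ omits the trivial representation --- where one must pin down the lift within the fibre $X_0+\n$ so that all of these (convex) constraints hold simultaneously. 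For abelian $G$ this can be done directly: identifying $\gg$ with $\mathrm{Hom}(\what G,\Ree)$, $\bar V$ becomes the set of homomorphisms $\bar\xi\colon\what{G/N}\to\Ree$ dominated by the subadditive function $\gamma\mapsto\log\om(\gamma^{-1})$, and extending such a $\bar\xi$ to a homomorphism on $\what G$ with the same domination is a Hahn--Banach argument, so the conjecture holds for abelian $G$. The main obstacle in general is exactly the point flagged in the remark preceding the conjecture: \cite[Theo.~4]{mckennon} yields a lift $X$ but no control on the growth of $\exp(iX)$ along the representations that do not factor through $q$, and obtaining such control seems to require genuinely new input relating $\om$ on $\what G\setdif(\what{G/N}\comp q)$ to the structure of $N_\Cee\subseteq G_\Cee$. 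Granting the main identification, the asserted ``in particular'' follows at once: for $\theta,\theta'\in G$, $\theta\sim_{N_\Cee}\theta'$ iff $\theta^{-1}\theta'\in N$, so $G/\sim_{N_\Cee}=G/N$, whence $(G/N)_{\om^N}\supsetneq G/N$ forces $G_\om\supsetneq G$.
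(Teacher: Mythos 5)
This statement is labelled a \emph{conjecture} in the paper: the authors give no proof, and the remark immediately preceding it records exactly the obstruction you end on, namely that McKennon's one-parameter-subgroup lifting theorem produces a lift of a positive element of $(G/N)_\Cee$ but gives no control over its rate of growth against $\ome$ along the representations of $G$ that do not factor through $q$. So your proposal should not be read as a proof, and to your credit you do not present it as one. What you do establish is sound and genuinely useful: the reduction via Proposition \ref{prop:quotient}(ii), polar decomposition and the $G$-Reinhardt property to the single claim $q(G_\ome^+)=(G/N)_{\ome^N}^+$ is correct; the translation to the Lie algebra picture, with $V=\{X\in\gg:\exp(iX)\in G_\ome\}$ compact and convex because it is cut out by the convex constraints $\lam_{\max}(i\pi(X))\leq\log\ome(\pi)$, is a clean reformulation of Proposition \ref{prop:Gomegaproperties}(ii); and the observation that the constraints indexed by $\what{G/N}\comp q$ are satisfied by \emph{every} lift correctly isolates where the difficulty lives. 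The abelian case via a dominated-extension (Hahn--Banach for subadditive majorants on discrete abelian groups) is also plausible and is a worthwhile special case to record, though you should spell out the extension lemma you are invoking, since the classical Hahn--Banach is stated for sublinear functionals on vector spaces rather than subadditive functions on abelian groups.

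The gap, then, is precisely the one you name: for nonabelian $G$ there is no argument offered (here or in the paper) that a lift $X\in X_0+\n$ of $\bar X\in\bar V$ can be chosen so that $\lam_{\max}(i\pi(X))\leq\log\ome(\pi)$ simultaneously for all $\pi\in\what{G}\setminus(\what{G/N}\comp q)$. Your support-function reformulation does not by itself remove this difficulty --- it converts the surjectivity $q(V)=\bar V$ into an equivalent dual inequality, but verifying that inequality still requires constructing the controlled lift. One caution about the framing: the compactness and nonemptiness of $V$ already presuppose $\ome$ bounded and $G_\ome\supsetneq G$ respectively; in the degenerate case $V=\{0\}$ the convex-duality machinery says nothing, and indeed the ``in particular'' direction of the conjecture (which is its main intended use) is exactly the assertion that $\bar V\supsetneq\{0\}$ forces $V\supsetneq\{0\}$, i.e.\ that at least one nonzero point of $\bar V$ lifts. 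So the conjecture remains open after your argument, with the abelian case and the clean convex-geometric reformulation as genuine partial progress.
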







\medskip
\noindent
{\bf Growth of weights.} 
We wish to find conditions on the weight $\ome$ which characterise
when $G_\ome\supsetneq G$ and $G_\ome =G$.

  We begin with some notation.  If $S\subset\what{G}$
we let
\[
S^{\otimes n}=\{\pi\in\what{G}:\pi\subset\sig_1\otimes\dots\otimes\sig_n\wwhere
\sig_1,\dots,\sig_n\in S\},\quad\langle S\rangle=\bigcup_{n\in\En}S^{\otimes n}.
\]
We say that $\what{G}$ is finitely generated if $\what{G}=\langle
S\rangle$ for some finite $S\subset\what{G}$.

\begin{proposition}
 $G$ is a Lie group if and only
if $\what{G}$ is finitely generated.
\end{proposition}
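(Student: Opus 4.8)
The plan is to prove both directions by connecting the generating set of $\what G$ to the structure of $G_\Cee$, using the Krein--Tannaka machinery developed in Section~\ref{sec:abslietheory}. For the forward direction, suppose $G$ is a compact Lie group. Then $G$ has a faithful finite-dimensional unitary representation $\pi_0$ (the classical Peter--Weyl result for Lie groups), and I would argue that $S=\{\pi:\pi\subset\pi_0^{\otimes k}\otimes\bar\pi_0^{\otimes l}\text{ for some }k,l\}$ generates $\what G$; more precisely, faithfulness of $\pi_0\oplus\bar\pi_0$ means that the subalgebra of $\trig$ generated by matrix coefficients of $\pi_0$ and $\bar\pi_0$ separates points of $G$, and by Stone--Weierstrass it is dense in $C(G)$, whence every irreducible $\pi$ occurs in some tensor power $\pi_0^{\otimes k}\otimes\bar\pi_0^{\otimes l}$ by the orthogonality relations. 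Thus $\what G=\langle\{\pi_0,\bar\pi_0\}\rangle$ is finitely generated.

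For the converse, suppose $\what G=\langle S\rangle$ for a finite set $S=\{\sig_1,\dots,\sig_m\}$. Form $\pi_0=\sig_1\oplus\dots\oplus\sig_m$, a finite-dimensional representation. I claim $\pi_0$ is faithful: if $s\in\ker\pi_0$ then $s$ acts trivially in every $\sig_j$, hence (by (\ref{eq:tensdec}), taking tensor products and subrepresentations) trivially in every $\pi\in\langle S\rangle=\what G$, so $\pi(s)=I$ for all $\pi\in\what G$; by (\ref{eq:unitarypart}), or simply by faithfulness of the regular representation, $s=e$. A compact group with a faithful finite-dimensional representation is a closed subgroup of some $\unitary{n}$, hence a Lie group by Cartan's closed subgroup theorem.

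The main obstacle is the forward direction — specifically, producing a faithful finite-dimensional representation of a compact Lie group $G$. This is standard (it follows from Peter--Weyl: the matrix coefficients separate points, and a finite sub-collection already separates points of the compact group $G$ after passing to a suitable quotient, combined with the fact that a Lie group has no small subgroups so that some finite collection of representations has trivial common kernel), but it genuinely uses the Lie hypothesis and should be cited rather than reproved. Everything else is a routine application of Stone--Weierstrass, the Schur orthogonality relations, and the tensor decomposition (\ref{eq:tensdec}) already recorded in the paper. I would phrase the argument so that the equivalence reads: $G$ Lie $\iff$ $G$ has a faithful finite-dimensional unitary representation $\iff$ $\what G$ is finitely generated, with the last equivalence being the content established by the Stone--Weierstrass/orthogonality argument above.
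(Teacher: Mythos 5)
Your argument is correct, and it is essentially the standard proof of this equivalence. For the record, the paper itself does not prove the proposition at all: it simply cites \cite[(30.48)]{hewittrII}, so your write-up supplies the argument that the authors delegate to the literature. Both halves of your proof are sound. In the converse direction, forming $\pi_0=\sig_1\oplus\dots\oplus\sig_m$ from a finite generating set $S$, noting that $\ker\pi_0$ is killed by every $\pi\in\langle S\rangle=\what{G}$ and hence trivial by Peter--Weyl, and then invoking Cartan's closed subgroup theorem, is exactly right. In the forward direction, your Stone--Weierstrass/orthogonality argument correctly shows that the (finite) set of irreducible constituents of $\pi_0\oplus\bar\pi_0$ generates $\what{G}$ in the sense of the paper's definition of $\langle S\rangle$ --- note that including $\bar\pi_0$ is what makes the generated subalgebra of $\trig$ self-adjoint and unital (since $1\subset\tau\otimes\bar\tau$), so Stone--Weierstrass applies. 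The one ingredient you rightly flag as needing a citation, the existence of a faithful finite-dimensional unitary representation of a compact Lie group, is the genuinely Lie-theoretic input (no small subgroups), and citing it is appropriate; everything else in your chain of equivalences ``$G$ Lie $\Leftrightarrow$ $G$ has a faithful finite-dimensional representation $\Leftrightarrow$ $\what{G}$ is finitely generated'' is self-contained. The only stylistic quibble is that your first displayed definition of $S$ in the forward direction describes an infinite set; the finite generating set you actually want, and end up using, is the set of irreducible components of $\pi_0\oplus\bar\pi_0$.
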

 \begin{proof}
This is \cite[(30.48)]{hewittrII}.
 \end{proof}

We  let  for any continuous unitary representation $\rho$ of $G$, and any
finite subset $S$ of $\what{G}$ 
\[
\ome(\rho)=\sup_{\sig\in\what{G},\sig\subset\rho}\ome(\sig)\aand
\ome(S)=\sup_{\sig\in S}\ome(\sig)
\]
so that $\ome(\rho)=\ome(\{\pi\in\what{G}:\pi\subset\rho\})$.  We note that
if $S$ is a finite subset of $\what{G}$, then $S^{\otimes (n+m)}
=S^{\otimes n}\otimes S^{\otimes m}$, i.e.\ any $\pi\iin S^{\otimes (n+m)}$
may be realised as a subrepresentation of $\pi'\otimes\pi''$ for some $\pi'\iin 
S^{\otimes n}$ and $\pi''\iin S^{\otimes m}$. 
Hence the function $\til{\ome}:\En\to\Ree^{>0}$
given by $\til{\ome}(n)=\ome(S^{\otimes n})$ is a weight.  Thus we can
appeal to (\ref{eq:beurlingcond}) to define
\[
\rho_\om(S)=\lim_{n\to\infty}\ome(S^{\otimes n})^{1/n}
\]
and for a single $\pi\iin\what{G}$, we define $\rho_\om(\pi)
=\lim_{n\to\infty}\ome(\pi^{\otimes n})^{1/n}$.

We say that $ \om $ is {\it nonexponential} if for every $ \pi\in\wh G $
\begin{eqnarray}
 \rho_\om(\pi)= 1
\end{eqnarray}
and we say that $\ome$ has {\it exponential growth} otherwise.

\begin{proposition}\label{prop:exgrowchar}
Let $\ome$ be a bounded weight on $\what{G}$.  Then the following are equivalent:

{\bf (i)} $\ome$ has exponential growth; and

{\bf (ii)} there is some finite subset $S$ of $\what{G}$ for which
$\rho_\ome(S)>1$.

\noindent Further, if $G$ is a Lie group then (i) and (ii) are equivalent to:

{\bf (iii)} $\rho_\ome(S)>1$ for every generating set $S\subset\what{G}$.
\end{proposition}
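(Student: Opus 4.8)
The plan is to prove (i)$\Leftrightarrow$(ii) for an arbitrary compact $G$, and then, assuming $G$ is a Lie group, to deduce the equivalence with (iii). The key point I would isolate is that \emph{for every finite subset $S=\{\sig_1,\dots,\sig_k\}$ of $\what G$} one has $\rho_\om(S)=\max_{1\le j\le k}\rho_\om(\sig_j)$. The inequality $\rho_\om(S)\ge\rho_\om(\sig_j)$ is immediate, since $\{\sig_j\}^{\otimes n}\subseteq S^{\otimes n}$ forces $\til{\ome}_j(n):=\ome(\sig_j^{\otimes n})\le\ome(S^{\otimes n})$ and hence $\rho_\om(\sig_j)\le\rho_\om(S)$. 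For the reverse inequality I would first establish
\[
\ome(S^{\otimes n})\le\max_{a_1+\dots+a_k=n}\ \prod_{j=1}^k\til{\ome}_j(a_j),\qquad a_j\in\{0\}\cup\En.
\]
To see this, take $\pi\subset\sig_{i_1}\otimes\dots\otimes\sig_{i_n}$ and let $a_j$ be the number of indices $l$ with $i_l=j$; after reordering the tensor factors, $\pi$ lies, up to unitary equivalence, in $\sig_1^{\otimes a_1}\otimes\dots\otimes\sig_k^{\otimes a_k}$. Peeling the factors off one block at a time, using repeatedly that an irreducible constituent of $\rho_1\otimes\rho_2$ lies in $\alpha\otimes\beta$ for some irreducible constituents $\alpha\subset\rho_1$, $\beta\subset\rho_2$, together with $\ome(\pi)\le\ome(\alpha)\ome(\beta)$ and $\ome(\alpha)\le\ome(\rho_1)$, $\ome(\beta)\le\ome(\rho_2)$, one obtains $\ome(\pi)\le\prod_{j=1}^k\til{\ome}_j(a_j)$; taking the supremum over $\pi\in S^{\otimes n}$ gives the estimate.

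To finish the reverse inequality, fix $\eps>0$ and put $c=\max_j\rho_\om(\sig_j)$. Each $\til{\ome}_j$ is a weight on $\{0\}\cup\En$ bounded below (as $\ome$ is bounded), so by (\ref{eq:beurlingcond}) $\til{\ome}_j(m)^{1/m}\to\rho_\om(\sig_j)\le c$; bounding the finitely many initial values then yields a single constant $C\ge1$ with $\til{\ome}_j(m)\le C(c+\eps)^m$ for all $m\ge0$ and all $j$. Substituting into the estimate gives $\ome(S^{\otimes n})\le C^k(c+\eps)^n$, hence $\rho_\om(S)\le c+\eps$, and letting $\eps\downarrow 0$ proves $\rho_\om(S)\le c$. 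Granting the displayed equality, (i)$\Rightarrow$(ii) follows by taking $S=\{\pi\}$ for some $\pi$ with $\rho_\om(\pi)>1$, so that $\rho_\om(S)=\rho_\om(\pi)>1$; and (ii)$\Rightarrow$(i) follows since $1<\rho_\om(S)=\max_j\rho_\om(\sig_j)$ forces $\rho_\om(\sig_j)>1$ for some $j$.

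For the Lie case, $\what G$ is finitely generated, so finite generating sets exist; (iii) applied to any such set is exactly statement (ii), which yields (iii)$\Rightarrow$(i) by the equivalence just proved. For (i)$\Rightarrow$(iii), suppose $\rho_\om(\pi_0)=c>1$ and let $S$ be a generating set; I may assume $S$ finite after replacing it by a finite generating subset $S'\subseteq S$ (possible since $\langle S\rangle=\what G$ is finitely generated, noting $\rho_\om(S')\le\rho_\om(S)$). Since $\pi_0\in\langle S\rangle$, we have $\pi_0\subset\sig_1\otimes\dots\otimes\sig_N$ with each $\sig_l\in S$, so every irreducible constituent of $\pi_0^{\otimes m}$ lies in $S^{\otimes Nm}$, whence $\ome(\pi_0^{\otimes m})\le\ome(S^{\otimes Nm})$. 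Taking $m$-th roots and passing to the limit along the subsequence $n=Nm$ (legitimate since the full limit defining $\rho_\om(S)$ exists) gives $c=\rho_\om(\pi_0)\le\rho_\om(S)^N$, i.e.\ $\rho_\om(S)\ge c^{1/N}>1$. The one genuinely delicate point is the estimate $\ome(S^{\otimes n})\le\max_{a_1+\dots+a_k=n}\prod_j\til{\ome}_j(a_j)$: it requires pushing an irreducible constituent of an $n$-fold tensor product down to a product organised by type, via a $(k-1)$-fold iteration of the fact that an irreducible constituent of $\rho_1\otimes\rho_2$ sits inside $\alpha\otimes\beta$ for irreducible constituents $\alpha\subset\rho_1$, $\beta\subset\rho_2$; everything else is routine manipulation with (\ref{eq:beurlingcond}) and submultiplicativity.
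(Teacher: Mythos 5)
Your proof is correct, and while it follows the same overall architecture as the paper's ((i)$\Leftrightarrow$(ii) via sorting the tensor factors of an element of $S^{\otimes n}$ by type, and the Lie case by comparing a given generating set with a finite one), the decisive direction (ii)$\Rightarrow$(i) is handled by a genuinely different argument. The paper works ``from below'': from $\ome(S^{\otimes n})\geq\rho_\ome(S)^n$ it extracts a witnessing sequence $\pi_n\in S^{\otimes n}$, decomposes $\ome(\pi_n)\leq\prod_j\ome(\sig_j^{\otimes l_{j,n}})$, and applies a pigeonhole argument to find one $j$ and a subsequence along which $\sig_j$ carries at least a $1/m$-fraction of the growth, concluding only that $\rho_\ome(\sig_j)\geq\rho_\ome(S)^{1/m}>1$. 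You instead work ``from above'': combining the same type-sorted decomposition with the bound $\til{\ome}_j(a)\leq C(c+\eps)^a$ coming from (\ref{eq:beurlingcond}), you get $\ome(S^{\otimes n})\leq C^k(c+\eps)^n$ and hence the exact identity $\rho_\ome(S)=\max_j\rho_\ome(\sig_j)$, which is sharper than what the paper extracts and makes both implications between (i) and (ii) immediate. Both arguments rest on the same underlying fact (already established in the paper when showing that $n\mapsto\ome(S^{\otimes n})$ is a weight), namely that an irreducible constituent of $\rho_1\otimes\rho_2$ sits inside $\alp\otimes\beta$ for irreducible constituents $\alp\subset\rho_1$, $\beta\subset\rho_2$. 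In the Lie case your reduction to a finite generating subset $S'\subseteq S$ and the computation $\rho_\ome(\pi_0)\leq\rho_\ome(S')^N$ along the subsequence $n=Nm$ is essentially the paper's comparison $\rho_\ome(S')\geq\rho_\ome(S)^{1/m}$, specialised to the singleton $S=\{\pi_0\}$; your version has the minor advantage of sidestepping any monotonicity question about whether a whole finite set $S$ is contained in a single ${S'}^{\otimes m}$.
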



\begin{proof}
Recall that a bounded weight always has $\rho_\ome(\pi)\geq 1$;
see remark after (\ref{eq:beurlingcond}).
That (i) implies (ii) is obvious. 
In the case that $G$ is a Lie group it is obvious that (iii) implies (ii).
For a Lie group $G$, if $S'$ is a generating set, then for some $m$,
${S'}^{\otimes m}\supset S$ and hence 
\[
\rho_\ome(S')=\lim_{n\to\infty}\ome({S'}^{\otimes n})^{1/n}\geq
\lim_{n\to\infty}\ome({S'}^{\otimes mn})^{1/mn}=\rho_\ome({S'}^{\otimes m})^{1/m}
\geq \rho_\ome(S)^{1/m}.  
\]
Hence (ii) implies (iii).
It remains to show that (ii) implies (i), in general.

Let $S=\{\sig_1,\dots,\sig_m\}$.  Suppose $\rho_\ome(S)>1$.  By
(\ref{eq:beurlingcond}) we have that
$\ome(S^{\otimes n})\geq\rho_\ome(S)^n$,
and hence there is a sequence $(\pi_n)_{n\in\En}$ such that
\[
\pi_n\in S^{\otimes n}\text{ for each }n,\aand \ome(\pi_n)\geq\rho_\ome(S)^n.
\]
Then for each $n$ there are $l_{1,n},\dots, l_{m,n}
\iin\{0\}\cup\En$ such that $\pi_n\in\sig_1^{\otimes l_{1,n}}\otimes\dots\otimes
\sig_m^{\otimes l_{m,n}}$ and $l_{1,n}+\dots +l_{m,n}=n$.  We have
\[
\ome(\pi_n)\leq\ome(\sig_1^{\otimes l_{1,n}})\dots\ome(\sig_m^{\otimes l_{m,n}})
\]
It follows from the ``pigeon-hole principle''
that for some $j=1,\dots,m$ that there is a sequence
$n_1<n_2<\dots$ for which $\ome(\pi_{n_k})^{1/m}\leq
\ome(\sig_j^{\otimes l_{j,n_k}})$.  Since 
$\rho_\ome(S)^{n_k/m}\leq\ome(\sig_j^{\otimes l_{j,n_k}})$, we may assume
$n_1,n_2,\dots$ are chosen so $l_{j,n_1}<l_{j,n_2}<\dots$.
Thus, since $l_{j,n_k}\leq n_k$ and $\ome(\sig_j^{\otimes l_{j,n_k}})>1$, we have
\[
\ome(\pi_{n_k})^{1/mn_k}\leq\ome(\sig_j^{\otimes l_{j,n_k}})^{1/n_k}
\leq\ome(\sig_j^{\otimes l_{j,n_k}})^{1/ l_{j,n_k}}
\]
and we  find
\[
1<\rho_\ome(S)^{1/m}=\lim_{k\to\infty}\ome(\pi_{n_k})^{1/mn_k}\leq
\lim_{k\to\infty}\ome(\sig_j^{\otimes  l_{j,n_k}})^{1/ l_{j,n_k}}
\]
where the latter is $\rho_\ome(\sig_j)$, again by (\ref{eq:beurlingcond}).
\end{proof}

\begin{example}\rm

(1) Consider exponential weights $\ome_\theta$ of Example \ref{exponentialweight}.
As $\theta\in G_\Cee$ we may appeal to (\ref{eq:coproduct})
and (\ref{eq:grouplike}) to see that
\[
\ome_\theta(\pi^{\otimes n})=\max_{\sig\subset\pi^{\otimes n}}\opnorm{\sig(\theta)}
=\opnorm{\pi^{\otimes n}(\theta)}=\opnorm{\pi(\theta)^{\otimes n}}=\opnorm{\pi(\theta)}^n
\]
and hence $\rho_{\ome_\theta}(\pi)=\opnorm{\pi(\theta)}$.  In particular such a weight is
bounded only if $\inf_{\pi\in\what{G}}\opnorm{\pi(\theta)}\geq 1$, and
of nonexponential growth only if $\opnorm{\pi(\theta)}=1$ for all $\pi\iin\what{G}$.

Corollary \ref{prop:ggproperties1} shows that $\ome_\theta$ is symmetric
exactly when, for each $\pi$, the smallest and largest eigenvalues
$\mu_\pi,\lam_\pi$ of $|\pi(\theta)|$ satisfy $\lam_\pi=\mu_\pi^{-1}$.


(2) Let $G$ be a compact group and let $\om(\pi)=d_\pi$, $\pi\in\wh
G$, be the dimension weight. The weight $\om$ is nonexponential by
Example~\ref{dimension} and Proposition~\ref{poly-sub} below.
\end{example}

Other examples of weights of nonexponential growth are given in the
next section.

\begin{proposition}\label{polygivesnoth}
Let $ \om $ be a nonexponential symmetric weight on $ \wh G $. Then $ G_\om=G $.
 \end{proposition}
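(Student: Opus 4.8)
The statement to prove is: if $\ome$ is a nonexponential symmetric weight on $\wh G$, then $G_\ome = G$. The plan is to show that the only positive element of $G_\ome$ is the identity $e$, and then to conclude via the polar decomposition. Since $\ome$ is symmetric it is in particular bounded (by the remark at the start of Section~3), so by Proposition~\ref{prop:Gomegaproperties} we know $G_\ome$ is a compact subset of $G_\Cee$ with the homeomorphic decomposition $G_\ome \cong G_\ome^+ \times G$ via $(\theta,s)\mapsto \theta s$. Hence $G_\ome = G$ is equivalent to $G_\ome^+ = \{e\}$. So the whole burden is to rule out any nontrivial positive element in the spectrum.

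\smallskip

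\textbf{Reduction to a single generating representation.} Suppose for contradiction that $\theta \in G_\ome^+ \setminus \{e\}$. Then for some $\pi_0 \in \wh G$ we have $\pi_0(\theta) \neq I$, and since $\pi_0(\theta) \geq 0$ this forces $\opnorm{\pi_0(\theta)} > 1$ or $\opnorm{\pi_0(\theta)^{-1}} > 1$; because $\ome$ is symmetric, Corollary~\ref{prop:ggproperties1} (or rather its consequence, used as in the proof of Proposition~\ref{prop:Gomegaproperties}(iii)) gives $\opnorm{\pi_0(\theta)^{-1}} = \opnorm{\bar\pi_0(\theta)}$, so replacing $\pi_0$ by $\bar\pi_0$ if necessary we may assume $\opnorm{\pi_0(\theta)} =: c > 1$. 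The key computation, exactly as in Example after Proposition~\ref{prop:exgrowchar}, is that for $\theta \in G_\Cee$ one has $\pi_0^{\otimes n}(\theta) = \pi_0(\theta)^{\otimes n}$ (this is (\ref{eq:coproduct})--(\ref{eq:grouplike})), hence $\opnorm{\pi_0^{\otimes n}(\theta)} = \opnorm{\pi_0(\theta)}^n = c^n$. Now pick, for each $n$, an irreducible $\sig_n \subset \pi_0^{\otimes n}$ attaining the operator norm, i.e. $\opnorm{\sig_n(\theta)} = c^n$. Since $\theta \in G_\ome$, the spectral condition (\ref{eq:spectrum}) gives $\opnorm{\sig_n(\theta)} \le \ome(\sig_n)$, whence $\ome(\sig_n) \ge c^n$, and therefore $\ome(\pi_0^{\otimes n}) \ge c^n$ for all $n$.

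\smallskip

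\textbf{Conclusion.} It follows that $\rho_\ome(\pi_0) = \lim_{n\to\infty} \ome(\pi_0^{\otimes n})^{1/n} \ge c > 1$, which contradicts the hypothesis that $\ome$ is nonexponential (recall $\rho_\ome(\pi_0)$ is well-defined since $n \mapsto \ome(\pi_0^{\otimes n})$ is a weight on $\En$, by the discussion before Proposition~\ref{prop:exgrowchar} with $S = \{\pi_0\}$). Hence no such $\theta$ exists, so $G_\ome^+ = \{e\}$, and by the homeomorphism $G_\ome \cong G_\ome^+ \times G$ from Proposition~\ref{prop:Gomegaproperties} we conclude $G_\ome = G$.

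\smallskip

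\textbf{Main obstacle.} The only genuinely delicate point is the interplay between the operator norm on a tensor product representation and the operator norm on its irreducible summands: one needs that the norm of $\pi_0(\theta)^{\otimes n}$ is realized on one of the irreducible blocks appearing in the decomposition (\ref{eq:tensdec}), which is clear since $\pi_0^{\otimes n}(\theta)$ acts as a direct sum (up to unitary equivalence) of the $\sig(\theta)$ for $\sig \subset \pi_0^{\otimes n}$, and the operator norm of a direct sum is the supremum of the operator norms of the summands. Everything else is bookkeeping with (\ref{eq:beurlingcond}), the symmetry of $\ome$, and Proposition~\ref{prop:Gomegaproperties}; the argument parallels the computation already carried out for exponential weights $\ome_\theta$ in the example following Proposition~\ref{prop:exgrowchar}.
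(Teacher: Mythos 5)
Your proof is correct and follows essentially the same route as the paper: take a nontrivial positive element $\theta$ of the spectrum, use $\opnorm{\pi_0^{\otimes n}(\theta)}=\opnorm{\pi_0(\theta)}^n$ together with $\opnorm{\sig(\theta)}\leq\ome(\sig)$ to force $\rho_\ome(\pi_0)>1$, contradicting nonexponentiality. Your treatment is in fact slightly more careful than the paper's, since you justify via Corollary~\ref{prop:ggproperties1} (passing to $\bar\pi_0$ if needed) why one can find $\pi_0$ with $\opnorm{\pi_0(\theta)}>1$, a step the paper asserts without comment.
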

\begin{proof}
Assume $ G_\om\ne G $. Then by Proposition \ref{prop:Gomegaproperties}
there exists $ \theta\in G_\om^+ $ such that
$ \sup_{\pi\in\wh G}\frac{\noop{\pi(\theta)}}{\om(\pi)}\leq 1 $ and $ \noop{\pi(\theta)}>1 $
 for some $ \pi\in\wh G $. Then
\[
\om(\pi^{\otimes n})^{1/n}= \sup_{\si\subset \pi^{\otimes n}}\om(\si)^{1/n} 
\geq \sup_{\si\subset \pi^{\otimes n}}\noop{\sig(\theta)}^{1/n}
=\noop{\pi(\theta)^{\otimes n}}^{1/n}=\noop{\pi(\theta)}
\]
giving $ \lim_{n\to\infty} \om(\pi^{\otimes n})^{1/n}>1 $,   a contradiction.
 \end{proof}

\begin{question}
Is it true in general that if  a weight $\om$ is exponential then $G_\om\ne G$?
\end{question}


\newtheorem{prop}[subsection]{Proposition}

\newtheorem{remarks}[subsection]{Remarks}

\newtheorem{notations}[subsection]{Notations}
\def\C{\mathbb C}
\def\R{\mathbb R}
\def\S{{\mathcal S} }
\def\g{\mathfrak g}
\def\h{\mathfrak h}
\def\c{\mathfrak c}
\def\a{\mathfrak a}
\def\k{\mathfrak k}
\def\u{\mathfrak u}
\def\z{\mathfrak z}
\def\C{\mathbb C}
\def\R{\mathbb R}
\def\I{\mathbb I}
\def\N{\mathbb N}
\def\al{\alpha}
\def\be{\beta}
\def\DE{\Delta}
\def\de{\delta}
\def\rh{\rho}
\def\ga{\gamma}
\def\GA{\Gamma}
\def\va{\varepsilon}
\def\LA{\Lambda}
\def\la{\lambda}
\def\OM{\Omega}
\def\om{\omega}
\def\var{\varphi}
\def\sp#1#2{\langle{#1},{#2}\rangle}
\def\cc{\mathfrak{c}}
\def\g{\mathfrak{g}}
\def\a{\mathfrak{a}}
\def\b{\mathfrak{b}}
\def\h{\mathfrak{h}}
\def\k{\mathfrak{k}}
\def\q{\mathfrak{q}}
\def\p{\mathfrak{p}}
\def\n{\mathfrak{n}}
\def\m{\mathfrak{m}}
\def\l{\mathfrak{l}}
\def\j{\mathfrak{j}}
\def\s{\mathfrak{s}}
\def\t{\mathfrak{t}}
\def\z{\mathfrak{z}}
\def\u{\mathfrak{u}}
\def\r{\mathfrak{r}}

\def\noi{\noindent}

\def\ga{\gamma}
\def\la{\lambda}
\def\ve{\varepsilon}
\def\si{\sigma}
\def\om{\omega}
\def\et{\eta}
\def\va{\varphi}
\def\om{\omega}
\def\ep{\epsilon}
\def\vp{\varphi}
\def\PH{\Phi}
\def\ps{\psi}
\def\ph{\phi}
\def\ch{\chi}
\def\Ga{\Gamma}

\def\N{\mathbb{N}}
\def\Z{\mathbb{Z}}
\def\R{\mathbb{R}}
\def\C{\mathbb{C}}
\def\ad{\rm{\, ad\, }}
\def\Ad{\rm{\, Ad \,}}
\def\Om{\Omega}
\def\ol#1{\overline{#1}}


\def\R{{\mathbb R}}
\def\C{{\mathbb C}}
\def\N{{\mathbb N}}
\def\Q{{\mathbb Q}}
\def\Z{{\mathbb Z}}
\def\T{{\mathbb T}}
\def\I{{\mathbb I}}

  \def\Id{{\mathbb I}}
\def\A{{\mathcal A}}
\def\B{{\mathcal B}}
\def\D{{\mathcal D}}
\def\F{{\mathcal F}}
\def\E{{\mathcal E}}
\def\H{{\mathcal H}}
\def\K{{\mathcal K}}
\def\L{{\mathcal L}}
\def\M{{\mathcal M}}
\def\RR{{\mathcal R}}
\def\cS{{\mathcal S}}
\def\T{{\mathcal T}}
\def\X{{\mathcal X}}
\def\U{{\mathcal U}}
\def\V{{\mathcal V}}
\def\W{{\mathcal W}}
\def\Z{{\mathcal Z}}
\def\ad{{\rm ad}}
\def\tr{{\rm tr}}

\def\iy{\infty}

\def\noin{\noindent}

\def\ol#1{\overline{#1}}
\def\ul#1{\underline{#1}}

\def\cds#1#2{#1,\cdots,#2}
\def\cdsp#1#2{\{#1,\cdots,#2\}}
\def\hb#1{\hbox{#1}}
\def\val#1{\vert #1\vert}

\def\no#1{\Vert #1\Vert }
\def\opno#1{\Vert #1\Vert_{\mathrm op} }
\def\pa#1{\{#1\}}
\def\vatw#1{\vert #1\vert_{L^2}}

\def\ind#1#2{\hb{ind}_{#1}^{#2}}
\def\CC#1{ C_c(#1)}

\def\ker#1{\hb{ker}(#1)}

\def\res#1{_{\vert #1}}
\def\inv{^{-1}}

\def\es{\emptyset}

\def\vs #1#2#3{\vskip #1#2#3 cm{\n}}
\def\hs #1#2#3{\hskip #1#2#3 cm}

\def\me{\medskip\noindent}
\def\hb #1{\hbox{#1}}

\def\un#1{\underline{#1}}

\def\nn{\nonumber}

\def\cdp#1#2{(#1,\cdots,#2)}

\def\cda#1#2{\{#1,\cdots,#2\}}

\def\cds{\cdots}

\def\hb#1{\hbox{#1}}
\def\val#1{\vert #1\vert}

\def\pa#1{\{#1\}}

\def\ker#1{\hb{ker}(#1)}

\def\im#1{\hb{im}(#1)}

\def\sp#1#2{\langle #1,#2\rangle }

\def\ca#1{{\mathcal #1}}

\def\dim#1{\hb{dim}(#1)}
\def\Log#1{\rm{Log}(#1)}

\def\bo#1{{\bf #1}} 
\def\ele{\'el\'ement}
\def\HS{{\mathcal H\mathcal S}}
\def\L1#1{L^1(#1)}
\def\Im{\mathrm{\, Im \,}}
\def\Re{\mathrm{\, Re\, }}
\def\ind{\mathrm{\, ind\, }}

\def\stacksx{\tilde\xi}
\def\stacksf{\stackrel{\sim}{f}}
\def\xiphi{\xi\otimes^p_{\rho}\phi}
\def\etapsi{\eta\otimes^{p'}_{\rho}\psi}
\def\pl{\,\parallel\;}
\def\lef({\left(}
\def\rig){\right)}
\def\lan{\langle}
\def\ran{\rangle}

\def\span#1#2{\LA ({#1,#2})}

\section{Polynomial weights}\label{polynomial}

In this section we introduce the polynomial weights which are of fundamental 
importance.  For ease {\it we will always assume that a weight $\om$
on $\what{G}$ is symmetric}.  In particular these weight are bounded 
and thus $G\subset G_\ome$.


\medskip\noindent{\bf Definition and basic theory.}
The following description of the dual space of a connected compact
Lie group $G$ has been taken from \cite{wa}. Let $\g$ be the Lie
algebra of $G$. Then $\g=\z\oplus \g_1$ with $\z$ the center of $\g$
and $\g_1=[\g,\g]$ a compact Lie algebra. Let $\sp{\cdot}{\cdot}$ be
an inner product on $\g$ satisfying (1) $\sp {\g_1} \z=(0)$ and (2)
$\sp{\cdot}{\cdot}_{\vert {\g_1\times\g_1}}=-B_{\g_1}$ (here $B_\k$
denotes the Killing form of a Lie algebra $\k$).

Let $X_1,\cdots,X_n$ be an
orthonormal basis of  $\g$, such that $\{X_1,\cds, X_r\}$ is a basis
of $\z$. Set 
\begin{equation}\label{eq:casimir}
\OM=\sum_i X_i^2\in \trig^\dagger. 
\end{equation}
Then $\OM$ is independent
of the choice of the orthonormal basis of $\g$ and $\OM$ is central
in $\trig^\dagger$.  (Normally, the element $\OM$ is defined in the 
universal enveloping algebra $U(\g)$ of $\g$, but for our purposes
it is sufficient regard its image in the associative algebra $\trig^\dagger$.)

Let $\t$ be a maximal abelian subalgebra of $\g_1$ and let
$T=\exp{\t}$. Let also $\la_1,\cdots,\la_r$ be complex valued linear
forms on $\z$ defined by $\la_j(X_i)=2\pi(-1)^{1/2}\de_{i,j}$. Let
$P$ be a Weyl chamber of $T$. Let $\LA_1,\cdots,\LA_l$ be defined by
$ \frac{2\LA_i(H_{\al_j})}{\al_j(H_{\al_j})}=\de_{i,j}$, where
$\al_1,\cdots,\al_l$ are the simple roots relative to $P$ and the
$H_{\al_j}$ the corresponding vectors in $\t$. To every $\ga$ in the
dual space $\widehat G$ of $G$ corresponds a unique element
$\LA_\ga=\sum_{i}n_i \la_i+\sum_j m_j \LA_j$ with the $n_i$ integers
and the $m_j$ nonnegative integers. Set $$\Vert \ga\Vert=\sup
\{\val{\LA_\ga(X)}:\norm{X}=1, X \in P\}
$$ and  $$ \no\ga_1=\sum_i\val{n_i}+\sum_j m_j .$$

We let now $ \pi_{i}=\ch_i $ be the character  of the group $ G $
associated to the highest weight $ \la_i,  i=1\cdots ,r;   $ and let
$ \ga_j $  be the irreducible representation associated to the
weight $ \LA_j, j=1, \cdots ,l $.

Let $ S=\{\pm\ch_i,\ga_j, i=1,\cdots,r, j=1, \cdots ,l \} $.
It is well known that for two irreducible representations $ \pi_\LA,\pi_M $ of $
G $ the tensor product representation $ \pi_\LA\otimes\pi_M $ contains the
representation $ \pi_{\LA+M} $ exactly once and all its irreducible components
$ \pi_N $ satisfy the relation $ N\leq \LA+M $, i.e.,  $ 0\leq N(X)\leq
\LA(X)+M(X) $ for every $ X $ in the Weyl chamber (see \cite[p. 111]{knapp}).
Note that $ \norm{\ga}\leq\norm{\pi} $ if $ \LA_\ga\leq\LA_\pi $ and
since $ \norm{\cdot} $ and $ \norm {\cdot}_1$ are equivalent we have also  $
\norm{\ga}_1\leq C\norm{\pi}_1  $ for some constant $ C $.

Therefore $ S $ generates $ \wh{G} $.  This allows us to define the function $
\ta_S $ on $ \widehat{ G} $
\begin{eqnarray}\label{deftaS}
 \ta_S(\pi)= k,\text{ if }\pi\in S^{\otimes k}\setminus S^{\otimes (k-1)}.
\end{eqnarray}

 Now for  any  highest weight $ \LA_\ga=\sum_{i}n_i \la_i+\sum_j m_j \LA_j$
corresponding to the irreducible representation $ \ga $ of $ G $ we see that
\begin{eqnarray}
 \nn \ga\subset \prod_{i}\ch_i^{n_i}\otimes\prod_j \ga_j^{\otimes m_j}\subset S^{\otimes \no
{\ga}_1}
\end{eqnarray}
and
\begin{eqnarray}
 \nn \ga\not\subset S^{\otimes (\no {\ga}_1-1)}.
\end{eqnarray}
This shows that we have the relation:
\begin{eqnarray}\label{tas=no}
 \ta_S (\gamma)=\no{\gamma}_1, \ga\in\widehat{G}.
\end{eqnarray}
We shall work with the fundamental polynomial weight
\begin{eqnarray}\label{tasdef}
 \nn \om_S=1+\tau_S.
\end{eqnarray}
Then for every power $ \al\ (\al\in\R^{>0}) $ the function $ \om_\al=\om_S^
\al$ is
also a weight on $ \wh{G} $.
We observe that if $S$ and $S'$ are both generating sets for $\what{G}$, then
there are constants $k_1,k_2$ such that $k_1\ome_S\leq \ome_{S'}\leq k_2\ome_S$.
For example, if $k$ is such that $S'\subset S^{\otimes k}$, then $\tau_{S'}\leq k\tau_S$
and hence $\ome_{S'}\leq k\ome_S$.  Hence $A_{\ome_S}(G)=A_{\ome_{S'}}(G)$.

We know from \cite[Lemma 5.6.4]{wa}  (with the notations of that
lemma)  that for every $\ga\in\widehat G$
\begin{equation}\label{eq:casimireig}
-\ga(\OM)=(\sp{\LA_\ga+\rho}{\LA_\ga+\rho}-
\sp\rho\rho)\I_{\mathcal H_\ga}=: c(\ga)I_\ga
\end{equation}
where $\rho$ is half the sum of the positive roots of $G$ related
to the Weyl chamber of $T$.  Then by \cite[Lemma 5.6.6]{wa}, there
are positive constants $c_1,c_2$ such that
\begin{equation}\label{equi}
c_1\Vert \gamma\Vert^2\leq c_1\Vert \ga\Vert_1^2
  \leq c(\gamma)\leq c_2\Vert \gamma\Vert^2\leq c_2\Vert \ga\Vert_1^2
  \end{equation}
and by \cite[Lemma 5.6.7]{wa}, the series
\begin{equation}\label{series}
\sum_{\gamma \in\widehat G} d_\gamma^2 (1+\Vert
\gamma\Vert^2_1)^{-s}
\end{equation}
converges if $s> \frac {d(G )}2$. Here $d(G)$ denotes the
the dimension of the group $G$.

We say that function $ u: G\to\C $ is \textit{$ 2n $-times $ \OM
$-differentiable}  if $ u \in L^2_{\om_{2n}}, (n\in \R^{>0})$. The
space $L^2_{\om_{2n}}(G)$ coincides with the space $$\{g\in
L^2(G):(1-\Om)^ng\in L^2(G)\}.$$  In fact $ u $ is on the domain of
the operator $ (1-\OM)^n $, since for every $ \ga\in \hat
 G$
\begin{eqnarray}
\nonumber \ga(\OM(u))&=& -c(\ga)\ga(u)
\end{eqnarray}
 and so by (\ref{tas=no})
 \begin{eqnarray*}
 \nonumber \sum_{\ga\in\wh{G}}d_\ga (1+c(\ga))^n\Vert\ga(u)\Vert^2_2\leq
\sum_{\ga\in\wh{G}}c_2d_\ga w_S^{2n}(\ga)\Vert\ga(u)\Vert^2_2
\\\leq
\sum_{\ga\in\wh{G}}c_1d_\ga (1+c(\ga))^n\Vert\ga(u)\Vert^2_2.
\end{eqnarray*}
With some abuse of notation we  write here $\gamma(f)$ instead of
$\hat f(\gamma)$ for $f\in L^2(G)$ and $\gamma\in\wh G$.



\begin{proposition}\label{mdiff} Let $\alpha>0$. Then 
$L^2_{\om_{2n}}(G)\subset A_{\om_\alpha }(G)  $ if $
n>\frac{d(G)}{4}+\frac{\alpha}{2} $.
 \end{proposition}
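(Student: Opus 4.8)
The plan is to read the statement as a Sobolev-type embedding and to prove it directly from the operator-field pictures of the two algebras, using the identification recalled just above. By that identification, $g\in L^2_{\om_{2n}}(G)$ means $(1-\OM)^n g\in L^2(G)$; since $\wh{(1-\OM)^n g}(\ga)=(1+c(\ga))^n\hat g(\ga)$, where $c(\ga)$ is the Casimir eigenvalue from \eqref{eq:casimireig}, Plancherel makes this equivalent to $\sum_{\ga\in\wh G}d_\ga(1+c(\ga))^{2n}\norm{\hat g(\ga)}_2^2<\infty$. On the other side, $g\in A_{\om_\alpha}(G)$ means $\sum_{\ga}d_\ga\norm{\hat g(\ga)}_1\,\om_S(\ga)^{\alpha}<\infty$, with $\om_S(\ga)=1+\no\ga_1$. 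So everything reduces to estimating the second sum in terms of the first.

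The estimate is one application of Cauchy--Schwarz. First I would write $\norm{\hat g(\ga)}_1\leq d_\ga^{1/2}\norm{\hat g(\ga)}_2$ (valid since $\hat g(\ga)$ has rank at most $d_\ga$); this is the step that inflates the power of $d_\ga$, which must later be reabsorbed through \eqref{series}. Splitting
\[
d_\ga\norm{\hat g(\ga)}_1\om_S(\ga)^{\alpha}\leq\bigl(d_\ga^{1/2}(1+c(\ga))^{n}\norm{\hat g(\ga)}_2\bigr)\bigl(d_\ga(1+c(\ga))^{-n}\om_S(\ga)^{\alpha}\bigr),
\]
summing over $\ga$, and applying Cauchy--Schwarz gives
\[
\norm{g}_{A_{\om_\alpha}}\leq\norm{(1-\OM)^n g}_{L^2}\Bigl(\sum_\ga d_\ga^2(1+c(\ga))^{-2n}\om_S(\ga)^{2\alpha}\Bigr)^{1/2}.
\]
Now \eqref{equi} gives a constant $c'>0$ with $1+c(\ga)\geq c'\om_S(\ga)^2$ for every $\ga$, so the tail sum is at most a multiple of $\sum_\ga d_\ga^2\om_S(\ga)^{2\alpha-4n}=\sum_\ga d_\ga^2(1+\no\ga_1)^{-2(2n-\alpha)}$, which, since $(1+\no\ga_1)^2\geq 1+\no\ga_1^2$, is at most $\sum_\ga d_\ga^2(1+\no\ga_1^2)^{-(2n-\alpha)}$. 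By \eqref{series} this last series converges exactly when $2n-\alpha>d(G)/2$, that is, when $n>\tfrac{d(G)}{4}+\tfrac{\alpha}{2}$, which is the hypothesis. Hence $g\in A_{\om_\alpha}(G)$, with $\norm{g}_{A_{\om_\alpha}}\leq C\norm{(1-\OM)^n g}_{L^2}$.

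A cleaner variant, which avoids the rank bound, is to factor through Proposition~\ref{prop:ltwofactor}: put $h=(1-\OM)^n g\in L^2(G)$ and let $E\in\etrig$ be the central element with $\hat E(\ga)=(1+c(\ga))^{-n}I_{\fH_\ga}$; the computation above gives $\norm{E}_{2,\om_{2\alpha}}^2=\sum_\ga d_\ga^2(1+c(\ga))^{-2n}\om_S(\ga)^{2\alpha}<\infty$ under the hypothesis, so $E\in L^2_{\om_{2\alpha}}(G)$, and since $\om_{2\alpha}\geq1$ this sits inside $L^2(G)$, so $E\ast h$ is a genuine convolution of $L^2$-functions. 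As $\hat E(\ga)$ is scalar, $\wh{E\ast h}(\ga)=\hat h(\ga)\hat E(\ga)=\hat g(\ga)$, whence $g=E\ast h$, and Proposition~\ref{prop:ltwofactor} with $\om_1=\om_{2\alpha}$, $\om_2\equiv1$ (so $(\om_1\om_2)^{1/2}=\om_\alpha$) gives $g\in A_{\om_\alpha}(G)$. Either way there is no real obstacle; the only delicate point is the numerology --- comparing $1+c(\ga)$ with $\om_S(\ga)^2\asymp 1+\no\ga_1^2$ via \eqref{equi} and matching it against the sharp convergence exponent $d(G)/2$ in \eqref{series} --- and it is precisely this matching that produces the stated threshold.
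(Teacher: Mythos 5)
Your proposal is correct, and your ``cleaner variant'' is precisely the paper's own proof: the authors also convolve $h=(1-\OM)^n g\in L^2(G)$ with the central kernel $E_n$ satisfying $\ga(E_n)=(1+c(\ga))^{-n}I_\ga$, verify $E_n\in L^2_{\om_\alpha^2}(G)$ by the very sum $\sum_\ga d_\ga^2\om_S(\ga)^{2\alpha}(1+c(\ga))^{-2n}$ controlled through (\ref{equi}) and (\ref{series}), and conclude via Proposition~\ref{prop:ltwofactor}. Your first variant is just that factorization unpacked by hand --- the Cauchy--Schwarz step together with the rank bound $\norm{A}_1\le d^{1/2}\norm{A}_2$ is exactly what the proof of Proposition~\ref{prop:ltwofactor} encapsulates --- so both routes land on the same threshold $2n-\alpha>d(G)/2$, and there is nothing to correct.
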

\begin{proof}

Using the Plancherel theorem, one can find an $L^2$-function $E_m$
on $G$, such that $$
  \ga(E_m)=\frac{1}{(1+c(\ga))^{m}}I_\ga, \forall
\ga\in\widehat
  G,$$
  for all real $m>\frac{d(G)} 4$. 
Then for $g\in L^2_{\om_{2n}}(G)$, $ n>\frac{d(G)}{4}+\frac{\alpha}{2}$ we have
\begin{equation}\label{inverse1}E_n*(1-\Omega)^n g= g.
\end{equation}
Since $(1-\Omega)^n g\in L^2(G)$ to see that $ g\in
A_{\om_\alpha}(G) $ it is enough to prove that $ E_n\in
L^{2}_{\om_\alpha^2}(G) $. By (\ref{equi}) we have
\begin{eqnarray}
 \nn \sum_{\ga}
d_\ga^2\om_\alpha(\ga)^2\norm{\gamma(E_n)}_2^2=\sum_{\ga}
\frac{d_\ga^2\om_\alpha(\ga)^2}{(1+c(\ga))^{2n}}\leq C \sum_{\ga}
\frac{d_\ga^2}{(1+c(\ga))^{2n-\alpha}}
\end{eqnarray}
and by (\ref{series}) the series  is convergent if $ 2n-\alpha>\frac{d(G)}{2} $.
 \end{proof}

\begin{definition}\label{polgr}
\rm  Let $ G $ be a compact Lie group.  A weight $ \om $ on $ \wh G
$ is said to have \textit{polynomial growth}, if $ \om $ is bounded
by $ \om_\al $ for some $ \al\in \R^{>0} $, i.e.\ if $ \om(\pi)\leq C
(1+\ta_S({\pi}))^\al,\pi\in\widehat{G} $ (for some constant $ C>0
$).

If $ G $ is any compact group with a weight $\om  $ on $ \wh G $ then we say
that $ \om $ is of \textit{polynomial growth}  if for every normal subgroup $ N
$ such that $ G/N $ is a Lie group the restriction weight $ \om^N $ has polynomial
growth.
 \end{definition}

\begin{example}\label{dimension} \rm Let $G$ be a connected
compact group and let $\om$ be the
dimension weight, i.e.\ $\om(\pi)=d_\pi$. Then $\om$ is of
polynomial growth. In fact, if $N$ is a normal subgroup such that
$G/N$ is a Lie group, then for $\ga\in\wh{G/N}$ we have
$\om^N(\ga)=\om(\ga\circ q)= d_\ga$. Since by (\ref{series}),
$d_\ga\leq C(1+\norm{\ga}_1)^{(d(G/N)/2)+\ve}$ for some $\ve>0$ and
$C>0$, and we can appeal to (\ref{tas=no}).
\end{example}

\begin{proposition}\label{poly-sub}
A polynomial weight $ \om  $ on $ \wh G $ is nonexponential and hence $
G_\om=G$.
 \end{proposition}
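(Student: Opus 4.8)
The plan is to prove that a polynomial weight is \emph{nonexponential}, i.e.\ that $\rho_\om(\pi)=1$ for every $\pi\in\wh G$; the equality $G_\om=G$ is then immediate from Proposition~\ref{polygivesnoth}, since throughout this section $\om$ is assumed symmetric. Being symmetric, $\om$ is bounded, so the remark after (\ref{eq:beurlingcond}) already gives $\rho_\om(\pi)\geq 1$; only the reverse inequality needs an argument.

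First I would dispose of the case that $G$ is a compact Lie group. Then polynomial growth (Definition~\ref{polgr}) means $\om(\pi)\leq C(1+\tau_S(\pi))^\al$ for all $\pi\in\wh G$, for some generating set $S$ and constants $C,\al>0$. Fix $\pi\in\wh G$ and put $k=\tau_S(\pi)$, so that $\pi\in S^{\otimes k}$. Since $\subset$ is preserved under $\otimes$ and $S^{\otimes(a+b)}=S^{\otimes a}\otimes S^{\otimes b}$, the representation $\pi^{\otimes n}$ is a subrepresentation of a $kn$-fold tensor product of elements of $S$, hence every irreducible $\sigma\subset\pi^{\otimes n}$ lies in $S^{\otimes kn}$ and satisfies $\tau_S(\sigma)\leq kn=n\tau_S(\pi)$. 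Consequently
\[
\om(\pi^{\otimes n})=\sup_{\sigma\subset\pi^{\otimes n}}\om(\sigma)\leq C\bigl(1+n\tau_S(\pi)\bigr)^\al ,
\]
and taking $n$-th roots and letting $n\to\infty$ gives $\rho_\om(\pi)=\lim_{n\to\infty}\om(\pi^{\otimes n})^{1/n}\leq 1$, so $\rho_\om(\pi)=1$.

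For a general compact group $G$, fix $\pi\in\wh G$ and let $N$ be the kernel of $\pi$. Then $G/N\cong\pi(G)$ is a closed subgroup of a unitary group, hence a compact Lie group, and $\pi$ factors as $\pi=\tilde\pi\circ q$ with $q:G\to G/N$ the quotient map and $\tilde\pi\in\wh{G/N}$. Since $\pi^{\otimes n}$ is trivial on $N$, the map $\rho\mapsto\rho\circ q$ puts the irreducible components of $\tilde\pi^{\otimes n}$ in bijection with those of $\pi^{\otimes n}$, so by the definition of $\om^N$ we get $\om(\pi^{\otimes n})=\om^N(\tilde\pi^{\otimes n})$ for every $n$. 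By Definition~\ref{polgr} the weight $\om^N$ has polynomial growth on the Lie group $G/N$, so the previous paragraph applies and yields $\rho_\om(\pi)=\lim_{n\to\infty}\om^N(\tilde\pi^{\otimes n})^{1/n}=\rho_{\om^N}(\tilde\pi)=1$. Hence $\om$ is nonexponential, and Proposition~\ref{polygivesnoth} gives $G_\om=G$.

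There is no real obstacle in this argument. The two points that need a little care are the bookkeeping $\tau_S(\sigma)\leq n\tau_S(\pi)$ for $\sigma\subset\pi^{\otimes n}$ (which rests only on transitivity of $\subset$ under $\otimes$ and on $S^{\otimes(a+b)}=S^{\otimes a}\otimes S^{\otimes b}$), and the observation that the kernel of any $\pi\in\wh G$ is a normal subgroup with Lie quotient, so that the general clause of Definition~\ref{polgr} may legitimately be invoked; the convergence $C^{1/n}\bigl(1+n\tau_S(\pi)\bigr)^{\al/n}\to 1$ is the standard fact that polynomially bounded weights have trivial exponential growth rate.
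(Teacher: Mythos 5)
Your proof is correct and follows essentially the same route as the paper: reduce the general compact case to the Lie case via $N=\ker\pi$ and the identity $\om(\pi^{\otimes n})=\om^N(\tilde\pi^{\otimes n})$, then in the Lie case bound $\om(\pi^{\otimes n})^{1/n}$ by $C^{1/n}(1+n\cdot(\text{length of }\pi))^{\al/n}\to 1$ and invoke Proposition~\ref{polygivesnoth}. The only cosmetic difference is that you run the sublinearity bookkeeping directly through $\tau_S(\sigma)\leq n\,\tau_S(\pi)$ for $\sigma\subset\pi^{\otimes n}$, whereas the paper phrases it as $\norm{\sigma}_1\leq Cn\norm{\pi}_1$; by (\ref{tas=no}) these are the same estimate.
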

\begin{proof}
We assume first that $ G $ is a Lie group. Then since
$\norm{\si}_1\leq Cn\norm{\pi}_1$ for each $\si\subset\pi^{\otimes
n}$ we have
\begin{eqnarray}
\nonumber \om(\pi^{\otimes n})^{1/n}&=& \sup_{\si\subset\pi^{\otimes
n}}\om(\si)\leq  \sup_{\si\subset\pi^{\otimes
n}}C(1+\norm{\si}_1)^{\alpha/n}\\
\nonumber &\leq& C'(1+n\norm{\pi}_1)^{\alpha/n}\overset{n\to\infty}{\longrightarrow} 1.
\end{eqnarray}
for some constant $ C' $.

Let $ G $ be an arbitrary compact group with polynomial weight on its
dual space $ \wh G $. Take $ \pi\in\wh G $. Then $ G/ \ker \pi $ is
a Lie group.  Let $ N=\ker\pi $ and let $ \pi_N $ be the
representation of $ G/N $ on $\H_\pi$ corresponding to $ \pi $. We
have
\begin{eqnarray}
 \nn \om(\pi^{\otimes n})^{1/n}= \om^N(\pi_N^{\otimes n})^{1/n}.
\end{eqnarray}
By the previous argument, $\lim_{n\to\infty} \om(\pi^{\otimes n})^{1/n}= 1$. That $ G_\om=G
 $ follows from Proposition \ref{polygivesnoth}.
\end{proof}

We observe the following, which was also proved in \cite{parks}.

\begin{corollary}
Let $G$ be a compact group and let $\om$ be the dimension weight.
Then $G_\om=G$.
\end{corollary}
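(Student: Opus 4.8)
The plan is to deduce the corollary from Proposition~\ref{polygivesnoth} by verifying that the dimension weight $\om(\pi)=d_\pi$ is symmetric and nonexponential. Symmetry is immediate, since conjugation does not change dimension, so $d_{\bar\pi}=d_\pi$; also $\om(\sig)=d_\sig\le d_\pi d_{\pi'}=\om(\pi)\om(\pi')$ whenever $\sig\subset\pi\otimes\pi'$, so $\om$ is genuinely a weight, and it is bounded (indeed $d_\pi\ge 1$ for every $\pi$), so $G\subset G_\om$. Thus everything comes down to showing $\rho_\om(\pi)=\lim_{n\to\infty}\om(\pi^{\otimes n})^{1/n}=1$ for each $\pi\in\what G$.

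The key step is a polynomial-in-$n$ bound on the dimensions of the irreducible constituents of $\pi^{\otimes n}$, obtained by transporting the question into a unitary group. Fix $\pi\in\what G$ and set $d=d_\pi$, regarding $\pi$ as a continuous homomorphism $\pi\colon G\to\unitary{\fH_\pi}\cong\unitary{d}$; that $\pi$ need not be faithful, and that $\pi(G)$ need not be connected, will play no role. If $\sig\in\what G$ and $\sig\subset\pi^{\otimes n}$, then $\fH_\sig$ is realised as a $G$-invariant subspace of $(\C^d)^{\otimes n}$, with $G$ acting through $\pi(G)\subset\unitary{d}$. Decompose $(\C^d)^{\otimes n}=\bigoplus_j W_j$ into $\unitary{d}$-irreducibles; each $W_j$ is in particular $\pi(G)$-stable, hence $G$-stable, and as $\sig$ is $G$-irreducible its projection onto some $W_j$ is a nonzero, hence injective, $G$-map, so $d_\sig\le\max_j\dim{W_j}$. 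By Schur--Weyl duality the $W_j$ are the Weyl modules $W_\lambda$ indexed by partitions $\lambda$ of $n$ into at most $d$ parts, and the Weyl dimension formula (equivalently, (\ref{equi}) and (\ref{series}) applied to the connected compact Lie group $\unitary{d}$) bounds each $\dim{W_\lambda}$ by $(n+d)^{d(d-1)/2}$, a polynomial in $n$ of degree $\binom{d}{2}$ that does not depend on $\lambda$. Hence $\om(\pi^{\otimes n})=\sup_{\sig\subset\pi^{\otimes n}}d_\sig\le (n+d)^{d(d-1)/2}$.

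Taking $n$-th roots and letting $n\to\infty$ now gives $\rho_\om(\pi)\le 1$, while $\rho_\om(\pi)\ge 1$ because $\om$ is bounded below (the remark following (\ref{eq:beurlingcond})); so $\rho_\om(\pi)=1$. Since $\pi\in\what G$ was arbitrary, $\om$ is nonexponential, and being also symmetric it satisfies the hypotheses of Proposition~\ref{polygivesnoth}, which yields $G_\om=G$. The one genuinely substantive point in all this is the middle paragraph --- that the irreducible constituents of $\pi^{\otimes n}$ occur inside $\unitary{d_\pi}$-subrepresentations of $(\C^{d_\pi})^{\otimes n}$ and therefore inherit the polynomial dimension growth of $\unitary{d_\pi}$; everything else is routine bookkeeping. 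It is worth noting that this argument never leaves the setting of tensor powers, so it applies verbatim to an arbitrary compact group without any analysis of restriction weights to $G_e$, of Lie quotients, or of Clifford theory.
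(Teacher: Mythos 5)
Your proof is correct, but it establishes nonexponentiality of the dimension weight by a genuinely different route than the paper. The paper's proof is a two-line citation: Example~\ref{dimension} shows the dimension weight has polynomial growth --- via the estimate $d_\gamma\leq C(1+\norm{\gamma}_1)^{d(G/N)/2+\varepsilon}$ extracted from the convergence of the series (\ref{series}) on Lie quotients $G/N$ --- and Proposition~\ref{poly-sub} then shows any polynomial weight is nonexponential, reducing a general compact $G$ to the Lie group $G/\ker\pi$ before invoking Proposition~\ref{polygivesnoth}. You instead bound $\sup_{\sigma\subset\pi^{\otimes n}}d_\sigma$ directly by transporting the problem into $\unitary{d_\pi}$: every irreducible constituent of $\pi^{\otimes n}$ injects $G$-equivariantly into some $\unitary{d_\pi}$-irreducible summand of $(\C^{d_\pi})^{\otimes n}$, and Schur--Weyl together with the Weyl dimension formula bounds these summands by $(n+d_\pi)^{d_\pi(d_\pi-1)/2}$. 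That middle step is sound (the projections onto the summands of a $G$-stable decomposition are $G$-maps, and a nonzero $G$-map out of an irreducible is injective), and the payoff is that you never need the polynomial-growth formalism or the reduction to Lie quotients: your argument applies verbatim to an arbitrary compact group, whereas Example~\ref{dimension} as stated assumes $G$ connected and the paper's proof implicitly relies on extending it. Both arguments ultimately rest on the Weyl dimension formula, yours applied to $\unitary{d_\pi}$ rather than to $G$ or its quotients. The final step --- symmetric plus nonexponential implies $G_\om=G$ via Proposition~\ref{polygivesnoth} --- coincides with the paper's.
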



\begin{proof}
Follows from Example~\ref{dimension} and Proposition~\ref{poly-sub}.
\end{proof}
\begin{proposition}
Let $G$ be a compact Lie group, let $\om$ be a symmetric weight such that
$a=\inf_{\gamma\in\wh{G}}\om(\gamma)^{1/\norm{\gamma}_1}>1$. Then $G_\om\ne G$.
\end{proposition}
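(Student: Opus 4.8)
The plan is to exhibit directly a point of $G_\om$ lying strictly outside $G$, namely a ``small'' positive element of the complexification. We may assume $\g\ne\{0\}$ (for a finite group all the exponents $\norm{\gamma}_1$ vanish and the hypothesis is vacuous). By Proposition~\ref{prop:GCproperties}(iv) we have $G_\Cee^+=\exp(i\g)$, and $X\mapsto\exp(iX)$ is a continuous injection of $\g$ into $G_\Cee$. So, fixing once and for all an $X\in\g\setminus\{0\}$, the elements $\theta_t:=\exp(itX)$ ($t\in\R$) all lie in $G_\Cee^+$, satisfy $\theta_t\ne e$ for $t\ne 0$, and $\theta_t\to e$ as $t\to 0$; moreover a non-identity positive operator-field cannot be everywhere unitary, so $\theta_t\notin G$ for $t\ne 0$ by (\ref{eq:unitarypart}). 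Since symmetric weights are bounded we have $G\subset G_\om$ (Proposition~\ref{prop:Gomegaproperties}), so it will suffice to choose $t\ne 0$ for which $\theta_t\in G_\om$.

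First I would bring in the generating set $S=\{\pm\ch_i,\ga_j\}$ of (\ref{deftaS}), for which $\ta_S(\gamma)=\norm{\gamma}_1$ by (\ref{tas=no}). The crucial estimate is that for any $\theta\in G_\Cee^+$ one has
\[
\opnorm{\gamma(\theta)}\le M(\theta)^{\norm{\gamma}_1}\qquad(\gamma\in\wh G),\qquad M(\theta):=\max_{\sigma\in S}\opnorm{\sigma(\theta)}.
\]
This follows because $\om_\theta(\pi):=\opnorm{\pi(\theta)}$ is a weight on $\wh G$ (Example~\ref{exponentialweight}, which rests on the coproduct computation (\ref{eq:coproduct})--(\ref{eq:grouplike})): writing $\gamma\subset\sigma_1\otimes\dots\otimes\sigma_k$ with $\sigma_i\in S$ and $k=\norm{\gamma}_1$ (possible by (\ref{tas=no})), a routine induction on $k$ using the defining submultiplicativity of a weight gives $\opnorm{\gamma(\theta)}=\om_\theta(\gamma)\le\prod_i\om_\theta(\sigma_i)\le M(\theta)^{\norm{\gamma}_1}$.

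To finish I would use that $S$ is finite and $\sigma(\cdot)$ depends continuously on its argument, so $M(\theta_t)\to 1$ as $t\to 0$; hence I can fix $t\ne 0$ with $M:=M(\theta_t)<a$, and set $\theta=\theta_t$. For every $\gamma\in\wh G$ the hypothesis gives $\om(\gamma)\ge a^{\norm{\gamma}_1}$ (reading $a^0=1\le\om(\gamma)$ when $\gamma$ is trivial, which holds since $\om(\gamma)\ge 1$ for symmetric $\om$), whence
\[
\frac{\opnorm{\gamma(\theta)}}{\om(\gamma)}\le\Bigl(\frac{M}{a}\Bigr)^{\norm{\gamma}_1}\le 1
\]
because $M<a$ and $\norm{\gamma}_1\ge 0$. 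By the description (\ref{eq:spectrum}) of the Gelfand spectrum this shows $\theta\in G_\om$, and since $\theta\notin G$ we conclude $G_\om\ne G$.

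The only genuinely substantive input is the submultiplicative bound $\opnorm{\gamma(\theta)}\le M(\theta)^{\norm{\gamma}_1}$; once the identification $\ta_S=\norm{\cdot}_1$ from the polynomial-weights section is available, this is immediate from the fact (recorded in Example~\ref{exponentialweight}) that $\pi\mapsto\opnorm{\pi(\theta)}$ is a weight, so I do not expect a serious obstacle. Everything else is a soft continuity/compactness argument to push $M(\theta)$ below $a$, plus the elementary observation that a non-identity positive element of $G_\Cee$ is not in $G$; the one point to state cleanly is the degenerate case $\g=\{0\}$, where the assertion is read as vacuous.
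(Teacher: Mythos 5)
Your proof is correct, and at the strategic level it coincides with the paper's: both arguments produce a non-identity positive element $\theta=\exp(i\lam X)\in G_\Cee^+$ with $\opnorm{\pi(\theta)}\leq a^{\norm{\pi}_1}\leq \om(\pi)$ for every $\pi$, so that $\theta\in G_\om\setminus G$ by (\ref{eq:spectrum}). Where you genuinely diverge is in how the exponential bound $\opnorm{\pi(\theta)}\leq C^{\norm{\pi}_1}$, with $C$ tending to $1$ as $\lam\to 0$, is obtained. The paper takes $X=X_n$ from the orthonormal basis underlying the Casimir element (\ref{eq:casimir}) and uses the eigenvalue estimates (\ref{eq:casimireig}) and (\ref{equi}) to get $\opnorm{\pi(X_n)}\leq c(\pi)^{1/2}\leq c\norm{\pi}_1$, hence $\opnorm{\exp(i\lam\,\pi(X_n))}\leq e^{\lam c\norm{\pi}_1}$, and then chooses $\lam$ with $e^{\lam c}\leq a$. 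You instead take an arbitrary nonzero $X\in\g$ and derive the same kind of bound from the submultiplicativity of the weight $\om_\theta(\pi)=\opnorm{\pi(\theta)}$ of Example \ref{exponentialweight} over the finite generating set $S$, combined with the identity $\ta_S=\norm{\cdot}_1$ of (\ref{tas=no}); this avoids the Laplacian spectral estimates entirely, needing only the coproduct computation (\ref{eq:coproduct}) and the combinatorics of $S$, and it shows that any nonzero direction in $\g$ works, whereas the paper's route yields an explicit admissible $\lam$ in terms of the constant in (\ref{equi}). Two small remarks: in the induction giving $\om_\theta(\gamma)\leq M^k$ for $\gamma\in S^{\otimes k}$ one must first decompose $\sig_2\otimes\cdots\otimes\sig_k$ into irreducibles before applying the weight inequality (as you indicate, this is routine and is the same device the paper uses when asserting $S^{\otimes(n+m)}=S^{\otimes n}\otimes S^{\otimes m}$); and the statement, hence either proof, implicitly requires $G$ connected of positive dimension, since $\norm{\gamma}_1$ is defined only through the highest-weight parametrisation of Section \ref{polynomial} --- your flagging of the degenerate case is appropriate, though the hypothesis there is better described as ill-posed than vacuous.
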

\begin{proof}
Let $X_1,\cdots,X_n$ and $\OM$ be as in (\ref{eq:casimir}).
Since each $X_i$ is skew-Hermitian we have from (\ref{eq:casimireig}) that
$0\leq -\ga(X_i)^2\leq c(\ga)I_\ga $ for any $
\ga\in\wh G $. Moreover, there exists  $\gamma\in\wh{G}$ such that $
\ga(X_n)\ne 0 $. Set $\theta=\exp{i\lambda X_n}$. 
Then, as in Proposition \ref{prop:GCproperties}, we have $\theta\in
G_{\C}^+$. Since  $ i\pi(X_n)\leq c(\pi)^{1/2}I_{\pi_n} \leq
c\norm{\pi}_1 I_{\pi_n}$ (the last inequality is due to (\ref{equi})),
we have for $\pi\in\what{G}$ that
$$\frac{\noop{\pi(\theta)}}{\om(\pi)} \leq \frac{\noop{\text{exp}{i\lambda\pi(X_n)}}}{a^{\norm{\pi}_1}}\leq
\frac{e^{\lambda c\norm{\pi}_1}}{a^{\norm{\pi}_1}}$$ for each $
\pi\in\wh G $. Taking now $\lambda$ such that $e^{\lambda c}\leq a$
we obtain that $\frac{\noop{\pi(\theta)}}{\om(\pi)}\leq 1$, and
hence $\theta\in G_\om\setminus G$.
 \end{proof}


\medskip\noindent
{\bf A smooth functional calculus and regularity of $A_\ome(G)$.}

\begin{definition}\label{l2om}
\rm
For $ \pi\in\wh{G} $ denote by $ \ch_\pi $ the normalized character of $ \pi $
i.e.
\begin{eqnarray}
 \nn \ch_\pi(s)=d_\pi \text{Tr}(\pi(s)),\; s\in G.
\end{eqnarray}
Then we have for any $ \si\in\wh{G} $
\begin{equation}
\nonumber \si(\ch_\pi)=\left\{\begin{array}{cc}
0 &\text{if }\si\ne\pi\\
I_\pi &\text{if }\si=\pi.\\
\end{array}
\right.
\end{equation}

Let $ Q_\om $ denote the linear operator on Trig$ (G) $ defined by
\begin{eqnarray}
 \nn Q_\om ( u)=\sum_{\wh{G}}\sqrt {\om(\pi)}\ch_\pi\ast u, u\in\text{
Trig}(G),
\end{eqnarray}

and let $ R_\om $ be its inverse:
\begin{eqnarray}
\nonumber R_\om(u)&=&  \sum_{\wh{G}}
\frac{1}{\sqrt{\om(\pi)}}\ch_\pi\ast u,
u\in\text{ Trig}(G).
\end{eqnarray}
Then we can extend the linear operator $ Q_\om $  (resp. $ R_\om $) to an
isometry $ Q_\om:L^2_\om( G)\to L^2(G)$   (resp. $ R_\om:L^2(G)\to L^2_\om(G)$)
and for $ \xi\in L^2(G) $ we have that
\begin{eqnarray}
 \nn  \xi\in L^2_\om(G)\Leftrightarrow Q_\om(\xi)\in L^2(G).
\end{eqnarray}
and
\begin{eqnarray}
 \nn \no\xi_{2,\om}=\no{Q_\om(\xi)}_2.
\end{eqnarray}
We can consider $ Q_\om $ (resp. $ R_\om $) as a convolution operator with the
central  distribution $ q_\om=\sum_{\pi\in\wh{G}}\sqrt{\om(\pi)}\ch_\pi $ (resp. with
the central distribution $ r_\om=\sum_{\pi\in\wh{G}}\frac{1}{\sqrt{\om(\pi)}}\ch_\pi $)
and we shall write $ Q_ \om$ and $ R_\om $ as convolution operators and then
\begin{eqnarray}
 \nn Q_\om\ast u=u\ast Q_\om,\  R_\om\ast u=u\ast R_\om,\ u\in  \text{Trig}(G).
\end{eqnarray}

 \end{definition}

\bigskip
For a real number $a$, let $[a]$ be the entire value of  $a$
and let $d(G)$ denote the dimension of the group $G$.
Let for $ \al>0 $
\begin{eqnarray}\label{rgdef}
  r(G,\al)= \left[\frac{d(G)}{2}+\al\right]+1.
\end{eqnarray}
The following theorem is an adaptation of Theorem 3.1 in
\cite{Lu-Tu}.

\begin {theorem}\label{growthcomp}
Let $G$ be a connected compact Lie group, let $ \om\leq c \om_S^\al $ be a
symmetric polynomial weight on $ \wh{G} $ and let $u=\overline u$
be a self-adjoint element of $A(G)_{\om_S^{r(G,\al)}}$. Then there exists a
positive constant
$C=C(u)$ such that
\begin{equation}\label{growthestimate}
\Vert e^{it u}\Vert_{A_\om(G)}\leq C(1+\val t)^{d(G)/2+\al}, t\in\R.
\end{equation}
\end{theorem}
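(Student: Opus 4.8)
The plan is to write the $A_\om$-norm of $e^{itu}$ through the Plancherel picture and then split the resulting series over $\wh G$ at the frequency scale $\no\ga_1\sim|t|$, handling the two pieces by opposite uses of the Cauchy--Schwarz inequality. First note that $e^{itu}\in A_\om(G)$: since $u$ lies in the Banach algebra $A_{\om_S^{r(G,\al)}}(G)$ the exponential series converges there, and $\om\le c\,\om_S^\al\le c\,\om_S^{r(G,\al)}$ gives a contractive inclusion $A_{\om_S^{r(G,\al)}}(G)\subset A_\om(G)$; for $|t|\le 1$ this already yields $\no{e^{itu}}_{A_\om}\le c\,e^{\no{u}_{A_{\om_S^{r(G,\al)}}(G)}}$, so I would assume $|t|\ge 1$. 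Writing $N=r(G,\al)=[d(G)/2+\al]+1$, a positive integer with $N>d(G)/2+\al$, and using $\om\le c\,\om_S^\al$, the identity $\om_S(\ga)=1+\tau_S(\ga)=1+\no\ga_1$ from (\ref{tas=no}), and $\no{T}_1\le\sqrt{d_\ga}\,\no{T}_2$ for $T\in\fL(\fH_\ga)$, I would begin from
\[
\no{e^{itu}}_{A_\om}=\sum_{\ga\in\wh G}d_\ga\,\no{\widehat{e^{itu}}(\ga)}_1\,\om(\ga)\ \le\ c\sum_{\ga\in\wh G}d_\ga^{3/2}(1+\no\ga_1)^\al\,\no{\widehat{e^{itu}}(\ga)}_2 ,
\]
and split the right-hand sum into $\Sigma_{\mathrm{lo}}$ over $\{\no\ga_1\le|t|\}$ and $\Sigma_{\mathrm{hi}}$ over $\{\no\ga_1>|t|\}$.

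For $\Sigma_{\mathrm{lo}}$ I would bound $(1+\no\ga_1)^\al\le(1+|t|)^\al$ and apply Cauchy--Schwarz with no extra weight, producing $(1+|t|)^\al\bigl(\sum_{\no\ga_1\le|t|}d_\ga^2\bigr)^{1/2}\,\no{e^{itu}}_2$ and using $\no{e^{itu}}_2\le 1$ (here $u=\bar u$, so $|e^{itu(s)}|\equiv 1$). The input that makes this work is the sharp counting estimate $\sum_{\no\ga_1\le R}d_\ga^2\le C_GR^{d(G)}$ for $R\ge 1$ --- the strengthened form of (\ref{series}), coming from the Weyl dimension formula together with a lattice-point count, as in \cite{Lu-Tu} --- which gives $\Sigma_{\mathrm{lo}}\le C(1+|t|)^{d(G)/2+\al}$. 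For $\Sigma_{\mathrm{hi}}$ I would instead factor $d_\ga^{3/2}(1+\no\ga_1)^\al\,\no{\widehat{e^{itu}}(\ga)}_2=[d_\ga^{1/2}(1+\no\ga_1)^N\no{\widehat{e^{itu}}(\ga)}_2]\,[d_\ga(1+\no\ga_1)^{\al-N}]$ and apply Cauchy--Schwarz: the first resulting factor is dominated by $\no{e^{itu}}_{2,\om_S^{2N}}$, and the second, after a dyadic decomposition of $\{\no\ga_1>|t|\}$ and the same ball count, is at most $C(1+|t|)^{d(G)/2+\al-N}$ --- the geometric series here converging precisely because $2(N-\al)>d(G)$.

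It then remains to prove the smoothness estimate $\no{e^{itu}}_{2,\om_S^{2N}}\le C(u)(1+|t|)^N$. Here I would use (\ref{equi}) to replace $(1+\no\ga_1)^{2N}$ by $(1+c(\ga))^N$, expand by the binomial theorem, and use the identity $\sum_{|\mu|=k}\no{\ga(X)^\mu\xi}^2=c(\ga)^k\no{\xi}^2$ for $\xi\in\fH_\ga$ --- an easy induction from $\sum_i\ga(X_i)^2=\ga(\OM)=-c(\ga)I_\ga$, by (\ref{eq:casimir}) and (\ref{eq:casimireig}) --- together with $\widehat{X^\mu v}(\ga)=\ga(X)^\mu\,\widehat{v}(\ga)$, to reduce to $\no{e^{itu}}_{2,\om_S^{2N}}^2\le C\sum_{|\mu|\le N}\no{X^\mu e^{itu}}_2^2$. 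Repeated application of the chain rule expresses $X^\mu e^{itu}$ as a sum over partitions $P$ of the multi-index $\mu$ of terms $(it)^{|P|}\bigl(\prod_{B\in P}X^Bu\bigr)e^{itu}$, with at most $N$ blocks each of order $\le N$; since $|e^{itu}|\equiv 1$ each such term has $L^2$-norm $\le C_N(1+|t|)^N(1+\no{u}_{C^N})^N$, and $\no{u}_{C^N}=\max_{|\mu|\le N}\no{X^\mu u}_\infty\le\max_{|\mu|\le N}\no{X^\mu u}_{A(G)}\le C\,\no{u}_{A_{\om_S^N}(G)}=C\,\no{u}_{A_{\om_S^{r(G,\al)}}(G)}<\infty$, using $\no{\widehat{X^\mu u}(\ga)}_1\le c(\ga)^{|\mu|/2}\no{\widehat{u}(\ga)}_1\le(1+\no\ga_1)^{|\mu|}\no{\widehat{u}(\ga)}_1$. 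Combining, $\Sigma_{\mathrm{hi}}\le C(u)(1+|t|)^N(1+|t|)^{d(G)/2+\al-N}=C(u)(1+|t|)^{d(G)/2+\al}$, hence $\no{e^{itu}}_{A_\om}=\Sigma_{\mathrm{lo}}+\Sigma_{\mathrm{hi}}\le C(u)(1+|t|)^{d(G)/2+\al}$.

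The main obstacle is reaching the \emph{exact} exponent $d(G)/2+\al$ rather than the larger integer $r(G,\al)$ that a single global Cauchy--Schwarz would give --- this is exactly why the hypothesis asks for $r(G,\al)$ derivatives of $u$ while the conclusion has the (generally non-integer) growth rate $d(G)/2+\al$. Obtaining it forces the scale-dependent splitting at $\no\ga_1\sim|t|$ and hence the use of the sharp Weyl-type ball count $\sum_{\no\ga_1\le R}d_\ga^2=O(R^{d(G)})$ rather than merely the convergence statement (\ref{series}); the remaining delicate point is the exact accounting of the power of $t$ produced by differentiating $e^{itu}$ up to order $N$, and the check that $N=r(G,\al)$ derivatives of $u$ --- precisely what membership in $A_{\om_S^{r(G,\al)}}(G)$ furnishes --- are enough.
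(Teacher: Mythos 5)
Your proposal is correct and is essentially the paper's argument in a different packaging: the split of the Fourier series at the frequency scale $\no{\ga}_1\sim\val{t}$ is exactly the paper's decomposition $e^{itu}=a_{t,N}+b_{t,N}$ with $N\approx\val t$, your unweighted Cauchy--Schwarz plus the Weyl ball count $\sum_{\no\ga_1\leq R}d_\ga^2=O(R^{d(G)})$ reproduces the bound $\no{a_{t,N}}_{A_\om}\leq\no{F_N}_{2,\om^2}\no{e^{itu}}_\infty$, and your weighted Cauchy--Schwarz with exponent $N=r(G,\al)$ reproduces $\no{b_{t,N}}_{A_\om}\leq\no{E_{m,N}}_{2,\om^2}\no{(1-\OM)^m e^{itu}}_2$. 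The only substantive difference is that you prove the derivative-growth estimate $\no{(1-\OM)^{N/2}e^{itu}}_2\leq C(1+\val t)^{N}$ directly via the chain rule and $\sum_i\ga(X_i)^*\ga(X_i)=c(\ga)I_\ga$, where the paper cites \cite{Lu-Tu}; your bookkeeping, which uses all $r(G,\al)$ derivatives guaranteed by the hypothesis, also makes the tail series converge for every $\al>0$.
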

\begin{proof}  By (\ref{equi}) and (\ref{series}),
we know that \begin{equation}\label{delta} \sum_{\ga\in\widehat G}
\frac{d_\ga^2}{(1+c(\ga))^{s}}<\infty,\ \ \forall s>\frac{d(G)}
2.
\end{equation}
Take $N\in \N^*$ and let  $$\wh{G}_N=\{ \ga\in\wh{G};\ \Vert
\ga\Vert \leq N\}.$$ Take an
$L^2$-function $E_m$ on $G$, such that $$
  \ga(E_m)=\frac{1}{(1+c(\ga))^{m}}I_\ga, \;\forall
\ga\in\widehat
  G,$$
  for all real $m>\frac{d(G)} 4$. We have
\begin{equation}\label{inverse}E_m*(1-\Omega)^m g= g
\end{equation}
for every  $g\in L^2_{\om_{2m}}$ on $G$.
Denote also by $F_N$ the element in $L^2(G)$, for which
$$
  \ga(F_N)=I_\ga,\; \forall \ga\in\widehat
  G_N,\
  \ga(F_N)=0\text{ otherwise},$$
  and by $E_{m,N}$
   the element in $L^2(G)$,
for which
$$
  \ga(E_{m,N})=\ga(E_m), \forall \ga\not\in\widehat
  G_N,\
  \ga(E_{m,N})=0\text{ otherwise}.$$
  Write now  $g_t$ for $e^{itu},\ t\in \R$. We decompose
  $g_t$ into
  $$g_t=a_{t,N}+b_{t,N},$$
  where $a_{t,N}$ and
  $b_{t,N}$ are defined by
$$\ga(a_{t,N})=\ga(g_t), \forall \ga\in\widehat
  G_N,\
  \ga(a_{t,N})=0\text{ otherwise},$$
  $$\ga(b_{t,N})=0, \forall \ga\in\widehat
  G_N,\
  \ga(b_{t,N})=\ga(g_t)\text{ otherwise}.$$
Then $a_{t,N}$ is a $C^\infty$-vector and so $b_{t,N}=g_t-a_{t,N}$ is of
class
$r(G,\al)$.
By the definition of $F_N$, we have that
$$F_N*a_{t,N}=a_{t,N}=F_N\ast Q_\om\ast R_\om\ast a_{t,N}.$$
Hence, by Proposition \ref{prop:ltwofactor},
\begin{eqnarray*}
\Vert a_{t,N}\Vert_{A_\om(G)}&\leq &\Vert F_N\ast Q_\om\Vert_{2,\om}
\Vert R_\om \ast a_{t,N}\Vert_{2,\om}\leq \Vert F_N\Vert_{2,\om^2} \Vert
g_t\Vert_2\\
&\leq&
\Vert F_N\Vert_{2,\om^2} \Vert g_t\Vert_{L^\iy(G)}.
\end{eqnarray*}
 Now, by \cite[proof
of 5.6.7]{wa}, if we set $n=d(G)$ for simplicity, we have
\begin{eqnarray}
\nonumber \Vert F_N\Vert_{2,\om^2}^2&=&\sum_{\Vert\ga\Vert\leq N}\om^2(\ga)
d_\ga^2\\
\nn &\leq& c_3N^{2\al} \sum_{j=0}^N j^{(n-l-r)}(2r+l)(2j+1)^{(r+l-1)}
 \\
\nonumber &\leq&   c_4 N^{2\al}\sum_{j=0}^N j^{n-1}\leq c_5 N^{n+2\al}.
\end{eqnarray}

Hence, since $u$
is a continuous real-valued function,
we have that $ g_t=e^{it
u}\in L^\infty(G), \ t\in\R$,  and  $\Vert g_t\Vert_\infty =1$ and we see
that
$$\Vert a_{t,N}\Vert_{A_\om(G)}\leq C N^{\frac {d(G)} 2+\al}$$
for a certain constant $C>0$. Now for the norm of the element
$b_{t,N}$ we get, using (\ref{inverse}) for $g=b_{t,N}$ and
$m=\frac{1}{2}([\frac{d(G)}{2}]+1)$ (which is easily checked to be
strictly larger than $\frac{d(G)}{4}$),
$$E_m*(1-\Omega)^m b_{t,N}=E_{m,N}*(1-\Omega)^m g_{t}$$
  and so
\begin{eqnarray}
\nonumber \Vert b_{t,N}\Vert_{A_\om(G)}&=& \Vert
E_{m,N}*(1-\Omega)^mg_t\Vert_{A_\om(G)} \\
\nn &=& \Vert Q_\om\ast E_{m,N}*R_\om\ast (1-\Omega)^mg_t\Vert_{A_\om(G)} \\
\nonumber &\leq&  \Vert E_{m,N}\Vert_{2,\om^2}\Vert (1-\Omega)^m g_{t}\Vert_{2}.
\end{eqnarray}

The arguments in \cite[Proof of Theorem 3.1]{Lu-Tu} give the following estimate:

 \begin{equation}\label{ineqfe}
 \nonumber \no{(1-\Omega)^{m} g_t}_2\leq C(1+\val t^{2m}),
 t\in \R,
 \end{equation}

for some  constant $C>0$ and for all $m\in \frac{1}{2}{ \N}$.

Therefore we get the following estimate of the $A_\om(G)$-norm of
$b_{t,N}$: for $t\in\R$,
\begin{eqnarray}
\nonumber \Vert b_{t,N}\Vert_{A_\om(G)}&\leq& C_1\left(\sum_{\Vert \ga\Vert >N}
\frac{d_\ga^2\om(\ga)^2}{(1+c(\ga))^{2m}}\right)^{1/2}(1+\vert t\vert)^{2m} \\
\nonumber &\leq&
C_2\left(\sum_{ j>N} j^{(n-1)} \frac {j^{2\al}}
{(1+j)^{4m}}\right)^{1/2}(1+\vert
t\vert)^{2m}\\
\nonumber &\leq & C_3 \frac{1} {N^{2m-\frac {n+2\al} 2}} (1+\vert t\vert)^{2m}.
\end{eqnarray}
Hence, if we let  $N$ to be the smallest integer $\geq \vert t\vert$
we obtain
\begin{eqnarray}
\nonumber \Vert e^{itu}\Vert_{A_\om(G)}&\leq& \Vert a_{t,N}\Vert_{A_\om(G)}+
\Vert b_{t,N}\Vert_{A_\om(G)}\\
\nonumber &\leq&  \frac{\rm C}{2} (1+\vert t\vert)^{(d(G)/2)+\al}+
\frac{\rm C}{2}(1+\vert t\vert)^{(d(G)/2)+\al}
\end{eqnarray}
 for a new constant $C>0$.
\end{proof}



\begin{theorem}\label{reg}
Let $ G $ be  a connected compact group and let $ \om$  be  a
polynomial weight on $ \wh{G} $. Then $ A_\om(G) $ is  a regular Banach algebra.
 \end{theorem}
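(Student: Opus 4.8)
The plan is to reduce regularity of $A_\om(G)$ for a general connected compact group $G$ to the Lie case via the quotient structure, and then in the Lie case build the separating functions by a smooth functional calculus using Theorem~\ref{growthcomp}. Recall that a commutative Banach algebra $A$ with spectrum $X$ is \emph{regular} if for every closed $E \subset X$ and every $x_0 \in X \setminus E$ there is $a \in A$ with $\hat a|_E = 0$ and $\hat a(x_0) \neq 0$. By Proposition~\ref{poly-sub} (or its generalization), a polynomial weight is nonexponential and hence $G_\om = G$, so the spectrum is just $G$ with its usual topology; thus we must separate a point $s_0 \in G$ from a closed set $E \subset G$ not containing it by functions in $A_\om(G)$.

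\textbf{Step 1: Reduction to the Lie case.} First I would observe that by Proposition~\ref{transl} left translation is an isometric automorphism of $A_\om(G)$ intertwining the evaluation functionals at $s$ and $ts$, so it suffices to separate the identity $e$ from a closed set $E$ with $e \notin E$. Since $G$ is compact, there is a normal subgroup $N$ with $G/N$ a Lie group and $q(e) = eN \notin q(E)$ — indeed the closed normal subgroups $N$ with $G/N$ Lie form a neighbourhood basis at $e$ (this is the Peter--Weyl / Lie projective limit structure), and compactness of $E$ lets us pick $N$ inside the open set $G\setminus E$; then $q^{-1}(q(E))$ may be larger than $E$ but still avoids $e$ provided $N$ is small enough. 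By Definition~\ref{polgr}, $\om^N$ is a polynomial weight on $\what{G/N}$, and by Proposition~\ref{prop:quotient}(i) the map $v \mapsto v\comp q$ is an isometric homomorphism $A_{\om^N}(G/N) \to A_\om(G)$ which pulls back the identity functional to the identity functional. So if we can find $v \in A_{\om^N}(G/N)$ with $v$ vanishing on $q(E)$ and $v(eN) \neq 0$, then $v \comp q$ does the job in $A_\om(G)$. This reduces everything to: \emph{$A_\om(G)$ is regular when $G$ is a connected compact Lie group and $\om$ is a polynomial weight}.

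\textbf{Step 2: Regularity in the Lie case via smooth functional calculus.} Now assume $G$ is a connected compact Lie group and $\om \leq c\,\om_S^\al$. Pick a real-valued $w \in \trig \subset A(G)_{\om_S^{r(G,\al)}}$ (trigonometric polynomials lie in every weighted algebra since they are finitely supported on $\what G$) with $w(e) = 0$ and $w > 0$ on $E$ — such $w$ exists because $\trig$ is uniformly dense in $C(G)$ and one can take $w = |p|^2 - \eta$ truncated, or more simply smooth $w$ by the heat kernel; the point is merely to get a self-adjoint element of $A(G)_{\om_S^{r(G,\al)}}$ separating $\{e\}$ from $E$ at the level of zero sets. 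Then by Theorem~\ref{growthcomp}, $\|e^{itw}\|_{A_\om(G)} \leq C(1+|t|)^{d(G)/2 + \al}$, so $t \mapsto e^{itw}$ is a polynomially bounded $A_\om(G)$-valued function. For any $\psi$ in the Schwartz space $\mathcal S(\R)$, the Bochner integral $\psi(w) := \int_\R \hat\psi(t)\, e^{itw}\, dt$ converges absolutely in $A_\om(G)$ (as $\hat\psi$ decays faster than any polynomial) and defines an element of $A_\om(G)$ whose Gelfand transform is the composition $x \mapsto \psi(w(x))$ on $G$. Choosing $\psi \in \mathcal S(\R)$ with $\psi = 1$ near $0$ and $\psi = 0$ on $[\delta, \infty)$ where $\delta = \inf_E w > 0$, the element $\psi(w) \in A_\om(G)$ vanishes on $E$ and equals $1$ at $e$. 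This gives regularity.

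\textbf{The main obstacle} will be Step~1 — specifically making precise the claim that the closed normal subgroups $N$ with $G/N$ a Lie group can be taken small enough to separate a point from a closed set, and confirming the weight $\om^N$ inherits polynomial growth in a form compatible with applying the Lie-case result (Definition~\ref{polgr} is set up exactly for this, so it should go through, but one must check $\om^N \leq c'(\om^N_{S'})^{\al'}$ concretely). The functional-calculus argument in Step~2 is essentially routine once Theorem~\ref{growthcomp} is in hand: the only subtlety is checking that the Gelfand transform of the Bochner integral $\int \hat\psi(t) e^{itw}\,dt$ is genuinely $\psi \comp w$ pointwise on $G$, which follows by evaluating the (norm-convergent) integral against the point functional $s \in G \subset A_\om(G)^*$ and using $\dpair{e^{itw}}{s} = e^{itw(s)}$ together with the Fourier inversion formula $\psi(\lambda) = \int_\R \hat\psi(t) e^{it\lambda}\,dt$.
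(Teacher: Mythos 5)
Your proposal is correct and follows essentially the same route as the paper: reduce to a connected compact Lie quotient $G/N$ via Proposition~\ref{prop:quotient}(i) and the projective-limit structure, approximately separate the sets by a real trigonometric polynomial, and then convert this into an exact separating element of $A_\om(G)$ by integrating $\hat\psi(t)e^{itw}$ against the polynomial growth bound of Theorem~\ref{growthcomp}. The only cosmetic differences are that the paper separates two disjoint closed sets (with a compactly supported $C^{(d(G)/2)+\al+2}$ function $\varphi$ rather than a Schwartz $\psi$) instead of a point from a closed set, and does not normalize the trigonometric polynomial to vanish exactly at the distinguished point; neither change affects the argument.
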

\begin{proof}
Let $ E,F $ be two closed subsets of $ G $ such that $ E\cap F=\es
$. We must find an element $ v\in A_\om(G) $, such that $ v=0 $ on $
E $ and $ v=1 $ on $ F $. Since $ G $ is connected, we can find a
normal subgroup $ N $ of $ G $, such that $ G/N $ is  a Lie group
and such that  $ EN\cap FN=\es $. Hence, since $
A_{\om^N}(G/N)\subset A_\om(G) $ we can assume that $ G $ is  a
connected compact Lie group. The algebra Trig$ (G) $ is uniformly dense in $
C(G) $. Hence there exists a trigonometric polynomial $ u $ on $ G
$, such that $ u=\ol u $ and such that $ \val {u(x)}<\frac{1}{10},
x\in E $ and $ {u(y)}>\frac{9}{10}, y\in F  $. We apply now the
functional calculus of $ C^k $ functions to $ u. $ Choose a function
$ \va:\R\to\R $ with compact support of class $ C^{(d(G)/2)+\al+2}
$, vanishing on the interval $ [-\frac{2}{10},\frac{2}{10}]  $ and
taking the value $ 1 $ on the interval $
[\frac{8}{10},\frac{12}{10}]  $. Then the integral
\begin{eqnarray}
 \nn v=\int_\R \hat \va(t)e^{2\pi  i t u}dt
\end{eqnarray}
converges in $ A_\om(G) $ by Theorem~\ref{growthcomp} and $ v $ has the property that
\begin{eqnarray}
 \nn v(x)= \int_\R \hat \va(t)e^{2\pi i t u(x)}dt=\va(u(a))=0, x\in E,
\end{eqnarray}
\begin{eqnarray}
 \nn v(y)= \int_\R \hat \va(t)e^{2\pi i t u(y)}dt=\va(u(y))=1, y\in F.
\end{eqnarray}
 \end{proof}

\section{Spectral synthesis }\label{weak}

Let $A$ be a semisimple, regular, commutative Banach algebra with
$X_A$ as spectrum; for any $a\in A$ we shall denote then by $\hat
a\in C_0(X_A)$ its Gelfand transform. Let also $E\subset X_A$ be a
closed subset. We then denote by
\begin{eqnarray*}
&I_A(E)= \{a \in A\mid \hat{a}^{-1}(0) \text{ contains } E\},\\
&J_A^0(E)=\{a\in A\mid \hat{a}^{-1}(0) \text{ contains a nbhd of
}E\}
\text{ and }
J_A(E)=\overline{J_A^0(E)}.
\end{eqnarray*}

It is known that $I_A(E)$ and $J_A(E)$ are  the largest and the
smallest closed ideals with $E$ as hull, i.e., if $I$ is a closed
ideal such that $\{x\in X_A: f(x)=0 \text{ for all } f\in I\}=E$
then $$ J_A(E)\subset I\subset I_A(E).$$

We say that $E$ is a {\it set of spectral synthesis} for $A$ if
$J_A(E)=I_A(E)$ and of {\it weak synthesis} if the quotient algebra
$I_A(E)/J_A(E)$ is nilpotent (see \cite{warner}).

Let $A^*$ be the dual of $A$. For $a\in A$ we set $\text{supp}(a)=
\overline{\{x\in X_A: \hat a(x)\ne 0\}}$ and
$\text{null}(a)=\{x\in X_A:\hat a(x)=0\}$. For $\tau\in A^*$ and
$a\in A$ define $a\tau$ in $A^*$ by $a\tau(b)=\tau(ab)$ and define
the support of $\tau$ by
$$\text{supp}(\tau)=\{x\in X_A: a\tau\ne 0\text{ whenever } \hat
a(x)\ne0\}.$$
It is known that $\text{supp}(\tau)$ consists of all $x\in X_A$
such that for any neighbourhood $U$ of $x$ there exists $a\in A$
for which $\text{supp}(a)\subset U$ and $\tau(a)\ne 0$. Then, for
a closed set $E\subset X_A$
$$J_A(E)^{\perp}=\{\tau\in A^*:\text{supp}(\tau)\subset E\}$$
and $E$ is spectral for $A$ if and only if
$\tau(a)=0$ for any $a\in A$ and $\tau\in A^*$ such that
$\text{supp}(\tau)\subset E\subset\text{null}(a)$.

We say that $ f\in A $ {\it admits spectral synthesis} if $ f\in J_A(\text{null}(f)) $.

Let $ G $ be a Lie group,  $\omega$ be a symmetric weight on $\wh G$ of
polynomial growth and let $A_\om(G)$ be the corresponding
Beurling-Fourier algebra. Then $ X_{A_\om(G)}=G $ by
Proposition~\ref{poly-sub}, and $A_\ome(G)$ is a semisimple regular commutative Banach algebra of functions on  $G$ by Theorem
\ref{reg} and Theorem~\ref{theo:semisimple}.  In what follows we write $I_\om(E)$ for
$I_{A_\om(G)}(E)$ and $J_\om(E)$ for $J_{A_\om(G)}(E)$. Let $\D(G)$
be the space of smooth functions on $G$.
 For a closed subset $E$ of $G$, we denote by
$J_\D(E)$ the space of all elements of  $\D (G)$ vanishing on $E$.
Note that $\D(G)\subset A_\om (G)$ by virtue of
Proposition~\ref{mdiff}.





\medskip\noindent
{\bf Smooth synthesis.}
\begin{definition}
\rm The  closed subset $E$ of $G$ is said to be of smooth synthesis for $A_\omega(G)$
if $\ol{J_\D(E)}=I_\om(E).$
\end{definition}

The proof of the next theorem is similar to one of \cite[Theorem 4.3]{Lu-Tu}.
\begin{theorem}\label{beurling}
Let $G$ be a Lie group of dimension $n$.
Let $M$ be a smooth submanifold of dimension $m<n$ and let $E$ be a
compact subset of $M$. Let $\om$ be a bounded weight on $\wh{G}$ such that
 $\om\leq C\om_S^\alpha$ for some  $C$, $\alpha>0$.
 Then $\overline{J_\D(E)}^{[\frac m
2+\alpha]+1}=J_\om(E)$.
\end{theorem}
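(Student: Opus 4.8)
The plan is to follow the strategy of \cite[Theorem 4.3]{Lu-Tu}, adapting it to the Beurling-Fourier setting by keeping careful track of how the polynomial weight $\om\leq C\om_S^\alpha$ enters the estimates. Write $k=[\frac m2+\alpha]+1$. The inclusion $\overline{J_\D(E)}^{\,k}\subset J_\om(E)$ is the easier direction: since $A_\om(G)$ is regular (Theorem~\ref{reg}) and $J_\D(E)\subset A_\om(G)$ (via Proposition~\ref{mdiff}), products of $k$ elements of $\overline{J_\D(E)}$ should be shown to lie in $J_\om(E)=\overline{J_\om^0(E)}$; the point is that a product of sufficiently many smooth functions each vanishing on $E$ vanishes to high enough order along the submanifold $M$ that it can be approximated in $A_\om$-norm by functions vanishing in a neighbourhood of $E$. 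The reverse inclusion $J_\om(E)\subset\overline{J_\D(E)}^{\,k}$ is the substantive one.

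For the hard inclusion, I would localize: using a partition of unity subordinate to coordinate charts straightening $M$, reduce to the model situation where $G$ is replaced (locally) by $\R^n=\R^m\times\R^{n-m}$, $M=\R^m\times\{0\}$, and $E$ is a compact subset of $\R^m\times\{0\}$. Then the local ideal theory translates into a statement about vanishing to order $k$ in the transverse $\R^{n-m}$ directions. The Beurling-Fourier norm controls, via Proposition~\ref{mdiff} and the Casimir estimates \eqref{equi}, \eqref{series}, a Sobolev-type norm of order roughly $\frac{n}{2}+\alpha$; the transverse codimension is $n-m$, and the arithmetic $[\frac m2+\alpha]+1 = [\frac{n}{2}+\alpha - \frac{n-m}{2}]+1$ is exactly what a trace/restriction theorem for Sobolev spaces of that order onto a codimension-$(n-m)$ submanifold produces as the order of vanishing needed for membership in the closure of the smooth ideal. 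So the skeleton is: (1) show $u\in J_\om(E)$ implies $u$ and its transverse derivatives up to some order vanish on $E$ — more precisely that $u$ lies in the closure, in the relevant smooth topology, of functions vanishing near $E$, using that $J_\om(E)$ is by definition such a closure and that the embedding $A_\om(G)\hookrightarrow C^{r}(G)$ for suitable $r$ (from Proposition~\ref{mdiff} and the $\om_S^\alpha$ bound) lets one pass derivatives through; (2) invoke the classical Whitney-type extension/approximation result that a $C^r$ function on $\R^n$ vanishing to order $k-1$ transversally along $\R^m\times\{0\}$ on a compact set $E$ is a finite sum of $k$-fold products of $C^r$ functions vanishing on $E$; (3) patch back together with the partition of unity, checking that multiplication by the smooth cutoffs is bounded on $A_\om(G)$ (which follows since $\D(G)\subset A_\om(G)$ and $A_\om(G)$ is a Banach algebra of functions, so pointwise multiplication by a fixed smooth function is bounded).

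The step I expect to be the main obstacle is the second one: producing the factorization of a transversally-flat $C^r$ function into exactly $k$ factors from $J_\D(E)$, with control of the $A_\om(G)$-norm of the factors rather than merely their $C^r$-norms. One has to be careful that the factors chosen can themselves be taken smooth (not merely $C^r$) and that the product approximates $u$ in the $A_\om$-norm, not just in a weaker topology; this is where the precise value of $r=r(G,\alpha)$ from \eqref{rgdef} and the smooth functional calculus of Theorem~\ref{growthcomp} must be matched against the codimension. The arguments of \cite[Proof of Theorem 4.3]{Lu-Tu} handle the unweighted case $\om\equiv 1$ (so $\alpha=0$, $k=[\frac m2]+1$), and the adaptation amounts to inserting the extra polynomial factor $\om_S^\alpha$ throughout the Sobolev bookkeeping; the sharpness of the exponent $[\frac m2+\alpha]+1$ is the delicate point, and one should double-check that the estimate in Theorem~\ref{growthcomp}, which gives $\|e^{itu}\|_{A_\om}\leq C(1+|t|)^{d(G)/2+\alpha}$, is exactly strong enough to run the Fourier-integral argument that converts transverse vanishing of order $k-1$ into approximability by $J_\om^0(E)$.
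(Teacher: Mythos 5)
Your assessment of which inclusion carries the weight is inverted, and this derails the plan. Writing $k=[\frac m2+\alpha]+1$, the inclusion $J_\om(E)\subset\overline{J_\D(E)}^{\,k}$ is the free one: $\overline{J_\D(E)}^{\,k}$ is a closed ideal whose hull is exactly $E$ (smooth bump functions supported off $E$ lie in $J_\D(E)$, and $\D(G)\subset A_\om(G)$), and $J_\om(E)$ is by definition the smallest closed ideal with hull $E$. No localization, Whitney factorization, or Sobolev trace theorem is needed for it --- and the machinery you propose there is partly unsound anyway: there is no embedding $A_\om(G)\hookrightarrow C^r(G)$ through which to ``pass derivatives'' (Proposition~\ref{mdiff} goes in the opposite direction, from a Sobolev-type space into $A_\om(G)$; for small $\alpha$ the algebra contains non-differentiable functions, so elements of $J_\om(E)$ need not have any transverse derivatives to speak of), and the factorization of a transversally flat function into exactly $k$ smooth factors each vanishing on an \emph{arbitrary} compact $E\subset M$ is a Whitney-ideal-type statement that is not available in the generality you would need.

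The substantive direction is the one you dismiss as easier: $J_\D(E)^{\,k}\subset J_\om(E)$, i.e.\ for smooth $f$ vanishing on $E$ one must show $f^{k}$ lies in the smallest closed ideal. Your one-sentence gesture (``vanishes to high enough order \dots can be approximated'') omits all the quantitative content, which is exactly where the exponent $[\frac m2+\alpha]+1$ comes from. The paper's argument runs: smoothness of $t\mapsto\rh(t)f$ into $A_\om(G)$ gives $|f|\leq C\ve$ on the tube $EV_\ve$, hence $|f^{k}|\leq C\ve^{k}$ there; a mollifier $u_\ve$ supported in $V_\ve$ satisfies $\norm{u_\ve}_{2,\om^2}\lesssim\ve^{-n/2-\alpha}$ (this is where the hypothesis $\om\leq C\om_S^\alpha$ enters, via the Casimir estimates (\ref{equi}) and (\ref{series})); the tube volume is $|EV_\ve|\approx\ve^{\,n-m}$ (this is where $m$ enters); and decomposing $f^{k}=\varphi_\ve+(f^{k}-f^{k}\ast\check u_\ve)+\nu\ast\check u_\ve$ with $\varphi_\ve\in J_\om(E)$, the error is $O(\ve)+O\bigl(\ve^{-n/2-\alpha}\cdot\ve^{k}\cdot\ve^{(n-m)/2}\bigr)\to0$ precisely because $k>m/2+\alpha$. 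None of this balancing appears in your proposal, so the proof of the only nontrivial inclusion is missing. (Theorem~\ref{growthcomp} plays no role here; it enters only through the regularity of $A_\om(G)$.)
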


\begin{proof}
As $J_\om (E)$ is the smallest closed ideal whose null set is $E$, in order to prove the statement it is enough
to see that $J_\D(E)^{[\frac m 2+\alpha]+1}\subset J_\om(E)$.
Let $f\in J_\D(E)$.
We note first that denoting by  $\rh(t)$ the right translation by $t$ we have that the mapping
$t\mapsto \rh(t) f\in A_\om(G) $ is $C^\iy$. In fact,
 since for  $m>\frac{d(G)}4+\frac{\alpha}2$, we have $E_m\in L_{\om^2}^2(G)$
and $f=E_m\ast
\check g$, where $g=((1-\OM)^m  f)\check{} \in\D(G)$.  Hence, for
$t\in G$,  we have
that
\begin{eqnarray}
\nonumber \rh(t)f(x)&=&f(xt) \\
\nonumber &=&\int_G E_m(u)g(t\inv x\inv u)du \\
\nonumber &=&\int_G E_m(u)\la(t)g(x\inv u)du\\
\nonumber &=&E_m\ast (\la(t) g)\check {}(x).
\end{eqnarray}
This shows that the mapping $t\mapsto \rh(t)f$ from $G$ to the Banach space
$A_\om(G)$ is smooth.

For $0<\ve<||f||_{\infty}$  let
\begin{eqnarray}{}
\nonumber W_\ve   &=& \{x\in G: \no{\rh (x)f-f}_{A_\om(G)}<\ve\}
\end{eqnarray}
 and
\begin{eqnarray}
\nonumber      \OM_\ve=\{x\in G: \no{\rh(x)f-f}_\iy<\ve\}.
\end{eqnarray}
If $A=\text{inf}_{\pi\in \wh{G}}\om (\pi)$ then by (\ref{agawg})
$$  W_\ve \subset\OM_{\ve/A}.$$

Since  the mapping
$g\mapsto \rh(g) f\in A_\om(G) $ is $C^\infty$,
there exists a
constant $K>0$, an open neighbourhood $W $ of $0$ in the Lie algebra
$\g$ of $G$, such that
\begin{equation}
\nonumber \no {\rh( \exp X)f-f}_{A_\om(G)}\leq K \no X
\end{equation}
for every $X\in \g$ and some fixed norm $\no{\cdot}$ on $\g$. Let
for $\ve>0$, $V_\ve=\exp {B_\ve}$, where $B_{\ve}$ denotes the ball
of radius $\frac{\ve}{2K} $ of center 0 in $\g$. There exist
constants $C_1>C_2>0$ such that  for every $\ve>0$
\begin{equation}
\nonumber C_1\ve^n >\val{V_\ve}>C_2 {\ve^n}
\end{equation}
and $V_\ve\subset W_\ve\subset \OM_{\ve/A}$. In particular, for every
$x=x_0v\in EV_\ve$, $x_0\in E$, $v\in V_{\ve}$, we have that
$f(x_0)=0$ since $f\in J_\D(E)$ and therefore
\begin{eqnarray}{}
\nonumber \val{f(x)}&=&\val{f(x_0v)}  \\
\nonumber &\leq&\val{f(x_0v)-f(x_0)}+\val{f(x_0)}  \\
\nonumber &=&\val{(\rh(v)f-f)(x_0)}\\
\nonumber &\leq&\no{(\rh(v)f-f)}_\iy<\ve A^{-1}.
\end{eqnarray}
Hence
\begin{equation}\label{e^m}
|  f^{[ \frac{m }{2}+\alpha ]+1}(x)|\leq (\ve A^{-1})^{[ \frac{m }{2}+\alpha ]+1}.
\end{equation}

\vspace{0.3cm}


Let $\nu=f^{[\frac m 2+\alpha]+1}$ on $EV_\ve$ and $\nu=0$
elsewhere. Take a  nonnegative function $b(X)\in C^\iy_c(\g)$
supported in $ B_1 $ and let $ b_\ve(X)=b(X/\ve) $, $ X\in\g $. Set
$ u(x)=C b_\ve(\log x) $, where $ C $ is a constant such that
$\int_G u(x)dx=1$. Then $\supp{(u)}\subset V_\ve$. Since
\begin{eqnarray*}
||u||_{2,\om^2}&=&\sum_{\gamma\in\wh{G}}d_\gamma\om(\gamma)^2||\hat u(\gamma)||_2^2
\leq C\sum_{\gamma\in\wh{G}}d_\gamma(1+c(\gamma))^{\alpha}||\hat u(\gamma)||_2^2\\&=&C\sum_{\gamma\in\wh{G}}
d_\gamma||\widehat {((1-\Om)^{\alpha/2}u)}(\gamma)||_2^2
=C||(1-\Om)^{\alpha/2}u||_2^2
\end{eqnarray*}
we have $u\in L^2_{\om^2}(G)$ and $||u||_{2,\om^2}$ behaves like
$\ve^{-n/2-\alpha}$ if $ \alpha/2 $ is an integer, i.e.,
\begin{eqnarray}
 \nn \norm{(1-\Om)^m u}_2^2=\sum_{\pi\in\wh G}(1+c(\pi))^{2m}\norm{\hat
u(\pi)}_2^2 d_\pi \leq C_m\ve^{-n-4m}
\end{eqnarray}
 for some constant $ C_m$  depending on  non-negative integer $ m $.

This gives
\begin{eqnarray}
 \nn \sum_{\pi\in\wh G}(\ve^2(1+c(\pi)))^{2m}\norm{\hat
u(\pi)}_2^2 d_\pi \leq C_m\ve^{-n}.
\end{eqnarray}
Let $ \alpha>0 $ be arbitrary. Then for nonnegative integers $ l $, $ m $ such
that $ 2l\leq\alpha <2m$  we have
\begin{eqnarray}\label{Ã}
 \nn \ve^{2\alpha} \norm{(1-\Om)^{\alpha/2} u}_2^2&=&\sum_{\pi\in\wh
G}(\ve^2(1+c(\pi)))^{\alpha}\norm{\hat
u(\pi)}_2^2 d_\pi \\
\nn  &=&\sum_{\pi:\ve^2(1+c(\pi))<1}(\ve^2(1+c(\pi)))^{\alpha}\norm{\hat
u(\pi)}_2^2 d_\pi\\
\nn  &+&
\sum_{\pi:\ve^2(1+c(\pi))\geq 1}(\ve^2(1+c(\pi)))^{\alpha}\norm{\hat
u(\pi)}_2^2 d_\pi\\
\nn  &\leq&\sum_{\pi\in\wh
G}(\ve^2(1+c(\pi)))^{2l}\norm{\hat
u(\pi)}_2^2 d_\pi\\
\nn  &+&
\sum_{\pi\in\wh
G}(\ve^2(1+c(\pi)))^{2m}\norm{\hat
u(\pi)}_2^2 d_\pi\\
\nn  &\leq&C_l\ve^{-n}+C_m\ve^{-n}\leq C\ve^{-n},
\end{eqnarray}
 and hence
\begin{eqnarray}
\nonumber \norm{(1-\Om)^{\alpha/2} u}_2^2&\leq & C\ve^{-n-2\alpha}
\end{eqnarray}
for some $C>0$.

 Consider now the function
$$\varphi(s)=(f^{[\frac m 2 +\alpha]+1}-\nu)*\check u(s)=\int_G (f^{[\frac m
2 +\alpha]+1}-\nu)(st)u(t)\, dt.$$ By Proposition \ref{prop:ltwofactor},
$\varphi\in A_\om(G)$, and $\va(s)=0$ if $s\cdot\supp{(u)}\subset
EV_\ve$.  As $E\subset\{s:s\cdot\supp{(u)}\subset EV_{\ve}\}$ and
the set $\{s:s\cdot\supp{(u)}\subset EV_{\ve}\}$ is open,
$\supp{(\varphi)}$ is disjoint from $E$ and therefore $\varphi\in
J_\om(E)$. We have
$$f^{[\frac m 2+\alpha]+1}-\varphi=(f^{[\frac m 2+\alpha]+1}-f^{[\frac m
  2+\alpha]+1}*\check u)+\nu*\check u.$$
As $\supp {(u)}\subset V_{\ve}\subset W_\ve$, and $||f^{[\frac m
2+\alpha]+1}-\rh(x)f^{[\frac m
  2+\alpha]+1}||_{A_\om(G)}\leq K\ve$  for all $x\in W_{\ve}$ and some constant
$K=K(m)>0$ which is
   independent of
  $\ve$, it follows that
\begin{eqnarray}{}
  \nonumber &&   ||f^{[\frac m 2+\alpha]+1}-f^{[\frac m
  2+\alpha]+1}*\check u||_{A_\om(G)}    \\
\nonumber &&= ||\int_G (f^{[\frac m 2+\alpha]+1}-\rh(x)f^{[\frac
m
  2+\alpha]+1})u(x)dx||_{A_\om(G)} \\
  \nonumber &&     \leq \int_G||f^{[\frac m 2+\alpha]+1}-\rh(x)f^{[\frac m
  2+\alpha]+1}||_{A_\om(G)}u(x)dx\leq K\ve.
  \end{eqnarray}

We have also $||\nu*\check u||_{A_\om(G)}\leq ||\nu||_2\cdot||u||_{2,\om^2}$. As
   $||u||_{2,\om^2}\leq C|\ve|^{-n/2-\alpha}$ for some constant $ C $, we obtain
\begin{eqnarray*}
\text{dist}(f^{[\frac m 2+\alpha]+1},J_A(E))&\leq& \no{f^{[\frac m
2+\alpha]+1}-\va}_{A_\om(G)}\\
&\leq&K\ve+
C{\ve}^{-n/2-\alpha}\left (\int_{EV_\ve}\val{f^{[\frac m
2+\alpha]+1}(x)}^2dx\right)^{1/2} \\
&\leq &K\ve +C\ve^{-n/2-\alpha}\sup_{x\in EV_\ve}
\val{f^{[\frac m 2+\alpha]+1}(x)}\val {EV_\ve}^{1/2}\\
&\leq &K\ve + C\frac{\ve^{-n/2-\alpha}}{C_2^{1/2}}\ve^{[\frac m 2+\alpha]+1}\val
{EV_\ve}^{1/2}.
\end{eqnarray*}

The following estimation of  $\val {EV_\ve}$ was obtained in \cite{Lu-Tu}: for every small $ \ve >0$
\begin{equation}
\nonumber \val
{EV_\ve}=C \ve^{n-m}.
\end{equation}

Hence, for $\ve>0$ small enough,
\begin{eqnarray*}
 \text{dist}(f^{[\frac{m}{2}+\alpha]+1},J_\omega(E))&\leq& K
\ve+C''\ve^{-n/2-\alpha}\ve^{[\frac m
2+\alpha]+1}\ve^{(n-m)/2}\\
&=&K\ve+C''\ve^{[\frac m 2+\alpha]+1-m/2-\alpha}
\end{eqnarray*}
for a new constant $C''$ which does not depend on $\ve$. Thus
$f^{[\frac{m}{2}+\alpha]+1}\in J_\om(E)$. It follows now from
standard arguments that $J_\D(E)^{[\frac{m}{2}+\alpha]+1}\subset
J_\om(E)$.

\end{proof}

\begin{cor}\label{smoothweak}
Let $E$ be a compact subset of a smooth $m-$dimensional
sub-manifold of
the Lie group  $G$.
If  $E$ is a set of smooth synthesis, then $E$ is of weak
synthesis with  $I_\om(E)^{[m/2+\alpha]+1}=J_\om(E)$.
\end{cor}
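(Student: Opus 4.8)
The plan is to obtain this directly from Theorem~\ref{beurling} by feeding in the smooth synthesis hypothesis; essentially no new work is needed. First I would recall the standing setup that makes the statement meaningful: by Theorem~\ref{theo:semisimple}, Theorem~\ref{reg} and Proposition~\ref{poly-sub} the algebra $A_\om(G)$ is semisimple, regular and commutative with spectrum $X_{A_\om(G)}=G$, and $\D(G)\subset A_\om(G)$ by Proposition~\ref{mdiff}; hence $J_\D(E)$, $I_\om(E)$ and $J_\om(E)$ are all well defined and the notions of smooth and weak synthesis apply to $E$.

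Next I would invoke Theorem~\ref{beurling}. Since $E$ is a compact subset of a smooth submanifold $M$ of $G$ of dimension $m$ (smaller than that of $G$) and $\om\leq C\om_S^\alpha$, that theorem gives
\[
\overline{J_\D(E)}^{\,[m/2+\alpha]+1}=J_\om(E).
\]
The hypothesis that $E$ is a set of smooth synthesis is exactly the identity $\overline{J_\D(E)}=I_\om(E)$; substituting this into the previous equality yields
\[
I_\om(E)^{[m/2+\alpha]+1}=J_\om(E),
\]
which is precisely the asserted relation.

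Finally I would deduce weak synthesis. Recall from the beginning of the section that $J_\om(E)\subset I_\om(E)$, these being respectively the smallest and the largest closed ideal with hull $E$, so that the quotient algebra $I_\om(E)/J_\om(E)$ makes sense; the displayed identity then says precisely that $\bigl(I_\om(E)/J_\om(E)\bigr)^{[m/2+\alpha]+1}=\{0\}$, i.e.\ this quotient is nilpotent of degree at most $[m/2+\alpha]+1$. By definition $E$ is therefore a set of weak synthesis for $A_\om(G)$, with the stated nilpotency bound.

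There is no genuine obstacle here: all the analytic content sits in Theorem~\ref{beurling} (which in turn rests on the functional-calculus estimate of Theorem~\ref{growthcomp}), and the corollary is just the observation that the power estimate of Theorem~\ref{beurling} together with smooth synthesis forces weak synthesis. The only points needing routine care are bookkeeping ones --- carrying the exponent $[m/2+\alpha]+1$ through unchanged, and citing the inclusion $J_\om(E)\subset I_\om(E)$ from the general theory recalled at the start of the section.
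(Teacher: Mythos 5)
Your proposal is correct and is exactly the intended argument: the paper states Corollary~\ref{smoothweak} without proof, treating it as the immediate consequence of Theorem~\ref{beurling} obtained by substituting the smooth-synthesis identity $\overline{J_\D(E)}=I_\om(E)$ into $\overline{J_\D(E)}^{\,[m/2+\alpha]+1}=J_\om(E)$ and then reading off nilpotency of $I_\om(E)/J_\om(E)$. Your bookkeeping of the standing hypotheses (regularity, semisimplicity, $\om\leq C\om_S^\alpha$, $\D(G)\subset A_\om(G)$) and of the inclusion $J_\om(E)\subset I_\om(E)$ is all that is needed.
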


The following corollary is a generalization of the Beurling-Pollard theorem for $ A(\Tee) $ (see \cite{kahane}) and
for its weighted analog $ A_{\om}(\Tee^n) $, where $ \om $ is a weight on $ \Zee^n $ defined by $ \om(k)=(1+|k|)^\alpha $ for
$ \alpha>0 $ (see \cite{sht_beurling}).

\begin{corollary} Let $E$ be a compact subset of a smooth $m$-dimensional sub-manifold
of the Lie group  $G$. 
Suppose that $f\in A_\om(G)$ satisfies the
condition
\begin{equation}\label{Lip}
|f(x)|\leq K\inf\{\no{X}: X\in\g, x\exp(-X)\in E\}^r
\end{equation}
for some fixed norm $\no{\cdot}$ on $\g$ and $K>0$.  Then 
$f$ admits spectral
synthesis for $A_\om(G)$ if $r>m/2+\alpha$.
\end{corollary}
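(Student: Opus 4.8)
The plan is to establish the corollary by transcribing the argument of Theorem~\ref{beurling} (itself modelled on \cite[Theorem~4.3]{Lu-Tu}), the heart of the matter being to show $f\in J_\om(E)$. First I would record the only consequence of hypothesis~(\ref{Lip}) we shall use. Taking $X=0$ shows $f$ vanishes on $E$, so $E\subset\mathrm{null}(f)$; and, writing $V_\ve=\exp(B_\ve)$ with $B_\ve$ the ball of radius $\ve$ about $0$ in $\g$, every $x\in EV_\ve$ is of the form $x_0\exp X$ with $x_0\in E$ and $\no X\le\ve$, so $\inf\{\no Y:x\exp(-Y)\in E\}\le\no X\le\ve$ and hence $|f(x)|\le K\ve^{\,r}$ on $EV_\ve$. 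Thus~(\ref{Lip}) furnishes here exactly what the bound $|f(x)|\le\ve/A$ on $EV_\ve$ (there deduced from smoothness of $f$) furnished in the proof of Theorem~\ref{beurling}. One point to keep in mind about the statement is that~(\ref{Lip}) constrains $f$ only in a neighbourhood of $E$, so the conclusion we can actually reach is $f\in J_\om(E)$; when $\mathrm{null}(f)=E$ — the situation of the classical Beurling--Pollard theorem — this is precisely ``$f$ admits spectral synthesis''.

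Next I would reproduce that construction. Fix a nonnegative $b\in C^\infty_c(\g)$ supported in $B_1$ and put $u_\ve(x)=c_\ve\, b\bigl(\log(x)/\ve\bigr)$, with $c_\ve>0$ normalised so that $\int_G u_\ve=1$; then $u_\ve\in\D(G)$ with support in $V_\ve$, and, as in the proof of Theorem~\ref{beurling} (using $\om\le c\,\om_S^{\alpha}$ together with~(\ref{equi}) and~(\ref{series})), $\no{u_\ve}_{2,\om^2}\le C\ve^{-n/2-\alpha}$ with $n=d(G)$. Set $\nu=f\cdot\mathbf 1_{EV_\ve}$ and $\varphi_\ve=(f-\nu)*\check u_\ve$. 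Since $f\in A_\om(G)\subset C(G)\subset L^2(G)$ we have $f-\nu\in L^2(G)$, so by Proposition~\ref{prop:ltwofactor} (applied with $\om_1\equiv1$ and $\om_2=\om^2$) $\varphi_\ve\in A_\om(G)$; and, $f-\nu$ vanishing on $EV_\ve$, $\varphi_\ve$ vanishes on the open set $\{s\in G:sV_\ve\subset EV_\ve\}$, which contains $E$, so $\varphi_\ve\in J^0_\om(E)\subset J_\om(E)$. Using $\int u_\ve=1$ we write $f-\varphi_\ve=(f-f*\check u_\ve)+\nu*\check u_\ve$ and estimate the two terms. For the second, $\no{\nu*\check u_\ve}_{A_\om}\le\no{\nu}_2\,\no{\check u_\ve}_{2,\om^2}=\no{\nu}_2\,\no{u_\ve}_{2,\om^2}$, and by the first paragraph $\no{\nu}_2^2=\int_{EV_\ve}|f|^2\le K^2\ve^{2r}|EV_\ve|$, while $|EV_\ve|=O(\ve^{n-m})$ by the volume estimate of \cite{Lu-Tu} for a compact subset of an $m$-dimensional submanifold; hence $\no{\nu*\check u_\ve}_{A_\om}\le C'\ve^{\,r-m/2-\alpha}\to0$ as $\ve\to0$, \emph{precisely because} $r>m/2+\alpha$. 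For the first term, since $f*\check u_\ve=\int_G(\rh(t)f)\,u_\ve(t)\,dt$ we get $f-f*\check u_\ve=\int_{V_\ve}(f-\rh(t)f)\,u_\ve(t)\,dt$, so $\no{f-f*\check u_\ve}_{A_\om}\le\sup_{t\in V_\ve}\no{f-\rh(t)f}_{A_\om}$.

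The crucial observation is that $\sup_{t\in V_\ve}\no{f-\rh(t)f}_{A_\om}\to0$ as $\ve\to0$ for \emph{every} $f\in A_\om(G)$: by Proposition~\ref{transl} the right translations are isometries of $A_\om(G)$ and $\trig$ is dense in it, so $t\mapsto\rh(t)f$ is norm-continuous at $e$. Unlike in Theorem~\ref{beurling}, where a nilpotency degree is being tracked, here we need no rate — only that this supremum tends to $0$ — so no smoothness of $f$ is required. Combining the two estimates, $\mathrm{dist}(f,J_\om(E))\le\no{f-\varphi_\ve}_{A_\om}\to0$, whence $f\in J_\om(E)$, which is the assertion. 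The routine part of the work — arranging the exponent $r-m/2-\alpha$ to be positive and matching the weight $\om^2$ with the $L^2_{\om^2}$-factorisation of $A_\om(G)$ in Proposition~\ref{prop:ltwofactor} — is a direct transcription of \cite{Lu-Tu} and Theorem~\ref{beurling}; the genuinely new point, and the one to be careful about, is dispensing with smoothness of $f$, handled as above via qualitative continuity of the orbit map, and (if one insists on the literal $J_\om(\mathrm{null}(f))$) keeping track of the fact that~(\ref{Lip}) only controls $f$ near $E$.
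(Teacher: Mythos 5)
Your proposal is correct and follows essentially the same route as the paper's own proof: the identical mollifier construction $u_\ve$, the cut-off $\nu=f\cdot\mathbf 1_{EV_\ve}$, the decomposition $f=\varphi_\ve+(f-f*\check u_\ve)+\nu*\check u_\ve$, and the same $\ve^{\,r-m/2-\alpha}$ estimate via $\|\nu\|_2\|u_\ve\|_{2,\om^2}$ and the volume bound $|EV_\ve|=O(\ve^{n-m})$. The two points you single out --- that only qualitative norm-continuity of $t\mapsto\rh(t)f$ is needed for the middle term (the paper simply asserts $\lim\|f-\varphi_\ve\|=\lim\|\nu*\check u_\ve\|$), and that the argument literally yields $f\in J_\om(E)$ rather than $f\in J_\om(\mathrm{null}(f))$ --- are handled correctly and, on the second point, more carefully than in the paper.
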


\begin{proof}
If $f$ satisfies (\ref{Lip}) then $f$ vanishes on $E$. Let
$V_\ve=\exp{B_\ve}$, where $B_\ve$ is the ball of radius $\ve$ of
center $0$ in $\g$. Let $\nu=f$ on $EV_\ve$ and $\nu=0$ elsewhere
and let $u(x)=u_\ve(x)\in \D(G)$ be family of functions from the
proof of Theorem~\ref{beurling} such that  $u_\ve(x)\geq 0$, $\int_G
u_\ve(x)dx=1$ and $\supp{(u_\ve)}\subset V_\ve$. Using arguments in
the proof of Theorem~\ref{beurling} we can see that
$\varphi_\ve=(f-\nu)\ast\check u_\ve\in J_\om(E)\subset
J_\om(\text{null}(f))$, $f=\varphi_\ve+ (f-f\ast\check
u_\ve)+\nu\ast\check u_\ve$ and
$$ \lim_{\ve\to 0}\norm{f-\varphi_\ve}_{A_\om(G)}= \lim_{\ve\to 0}\norm{\nu\ast\check{u}_\ve}_{A_\om(G)}$$
while
$$  \norm{\nu\ast\check{u}_\ve}_{A_\om(G)}\leq \ve^{-n/2-\alpha}\sup_{x\in EV_\ve}
\val{f(x)}\val {EV_\ve}^{1/2}\leq  C\ve^{-n/2-\alpha}\ve^r\ve^{n/2-m/2}.$$
Hence  if
 $r>m/2+\alpha$, then  $ \lim_{\ve\to 0}\norm{f-\varphi_\ve}_{A_\om(G)}=0  $ and hence
$f\in J_\om (E)\subset J_\om(\text{null}(f))$.
\end{proof}

\begin{theorem}\label{orbit}
Let $ G $ be a Lie group and $ \om $ be a bounded weight on $ \wh G $ such that $\om\leq C\om_S^\alpha$ for
some  $C$, $\alpha>0$.
 Let $ B $ be a group of affine transformations of $ G $ which preserves $ A_\om(G) $, i.e.
$ u(b(x))\in A_\om(G) $ for each $ u(x)\in A_\om(G)  $ and each $ b\in B $. Let $ O\subset G $ be a closed
$ m $-dimensional $ B $-orbit in $ G $. Then $ O $ is a set of smooth synthesis and hence of weak synthesis with
$ I_\om(O)^{[m/2+\alpha]+1}=J_\om(O) $.
 \end{theorem}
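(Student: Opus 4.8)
The plan is to establish the only substantial point — that $O$ is a set of smooth synthesis for $A_\om(G)$, i.e.\ $\overline{J_\D(O)}=I_\om(O)$ — and then to deduce the rest from Corollary~\ref{smoothweak}. Being a closed orbit of a smooth action, $O$ is a compact boundaryless $m$-dimensional submanifold of $G$; if $m=d(G)$ then $O$ is clopen, $\chi_O$ is smooth, and $I_\om(O)=J_\om(O)=\overline{J_\D(O)}$ hold trivially, so I would assume $m<d(G)$. Once $O$ is known to be of smooth synthesis, Corollary~\ref{smoothweak} (which rests on Theorem~\ref{beurling}), applied with $E=O$ inside the submanifold $O$, yields that $O$ is of weak synthesis with $I_\om(O)^{[m/2+\alpha]+1}=J_\om(O)$, which is the assertion. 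Since $\D(G)\subseteq A_\om(G)$ by Proposition~\ref{mdiff} and $I_\om(O)$ is closed, the inclusion $\overline{J_\D(O)}\subseteq I_\om(O)$ is automatic; so the real task is to show that every $f\in A_\om(G)$ with $f|_O=0$ is an $A_\om(G)$-limit of functions in $\D(G)$ vanishing on $O$.

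To produce such approximants I would correct a smooth approximant of $f$ by subtracting a transverse extension of its restriction to $O$. Fix $x_0\in O$ with stabiliser $H=\{b\in B:bx_0=x_0\}$, so $O\cong B/H$. After passing, if necessary, to a finite cover of $O$ trivialising its normal bundle, choose a tubular neighbourhood $\Phi:O\times D\to U\subseteq G$ with $D$ a ball about $0$ in $\R^{d(G)-m}$ and $\Phi(y,0)=y$, together with a fixed bump $\beta\in C^\infty_c(D)$ equal to $1$ near $0$, and define the transverse extension operator $P$ by $(Pu)(\Phi(y,z))=u(y)\beta(z)$ on $U$ and $Pu=0$ off $U$. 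Then $Pu\in\D(G)$ whenever $u\in\D(G)$, $(Pu)|_O=u|_O$, and $P$ annihilates $I_\om(O)$. Given $f$ as above and $\ve>0$, choose $h\in\trig$ with $\no{h-f}_{A_\om(G)}<\ve$ and put $g=h-Ph\in\D(G)$; then $g|_O=0$, so $g\in J_\D(O)$, while $Pf=0$ gives $Ph=P(h-f)$, whence
\[
\no{g-f}_{A_\om(G)}\leq\no{h-f}_{A_\om(G)}+\no{P(h-f)}_{A_\om(G)}\leq(1+\no{P}_{\mathrm{op}})\,\ve .
\]
Letting $\ve\to0$ shows $f\in\overline{J_\D(O)}$ locally; a smooth partition of unity subordinate to finitely many such charts covering $O$ — plus the remark that the summand of $f$ vanishing near $O$ lies in $\overline{J_\D(O)}$ by ordinary mollification — then patches the local approximants together.

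The key point, and the step I expect to be the main obstacle, is the boundedness of $P$ on $A_\om(G)$: an external-product estimate $\no{Pu}_{A_\om(G)}\leq C\no{u}_{A_\om(G)}$ with $C$ independent of $u$ and, after transporting by $B$, of the chart. For the unweighted algebra $A(G)$ this is classical, coming from the identification $A(O\times D)\cong A(O)\widehat{\otimes}A(D)$ together with the fact that $\beta$ is a fixed compactly supported smooth function; in the weighted case one must verify that the weight is respected, and it is here that the hypotheses enter. The bound $\om\leq C\om_S^\alpha$ allows the Fourier-side and Sobolev-type estimates of the proofs of Theorem~\ref{beurling} and Proposition~\ref{prop:ltwofactor} to be reused, while the $B$-orbit structure supplies the homogeneous, boundaryless model $O\cong B/H$ carrying a Beurling--Fourier algebra (the weighted analogue of the restriction algebra of Proposition~\ref{prop:restrict}), with the weight uniformly controlled along $O$ because $B$ consists of affine transformations preserving $A_\om(G)$ — each of which, by the closed graph theorem, acts as a bounded operator on $A_\om(G)$. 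I expect essentially all the real work to sit in this estimate, adapting \cite{Lu-Tu}; granting it, the argument closes as above, $O$ is a set of smooth synthesis, and Corollary~\ref{smoothweak} then upgrades this to weak synthesis with $I_\om(O)^{[m/2+\alpha]+1}=J_\om(O)$.
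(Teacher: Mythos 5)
Your global strategy is the right one: prove that $O$ is of smooth synthesis and then feed this into Corollary~\ref{smoothweak} to get weak synthesis with $I_\om(O)^{[m/2+\alpha]+1}=J_\om(O)$; and the easy inclusion $\ol{J_\D(O)}\subseteq I_\om(O)$ together with the final $(1+\no{P}_{\mathrm{op}})\,\ve$ bookkeeping is fine \emph{granting} your operator $P$. But the step you defer as ``the main obstacle'' is not a technical estimate to be filled in later --- as stated it is false, and this is where the proof breaks. Your $P$ is built from an arbitrary tubular-neighbourhood chart $\Phi:O\times D\to U$, and $Pu$ is defined by composing with $\Phi^{-1}$. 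Fourier algebras (a fortiori Beurling--Fourier algebras) are not stable under composition with general diffeomorphisms: by Beurling--Helson type rigidity, already on $\Tee$ only affine maps preserve $A(\Tee)$, so for a generic chart $\Phi$ the function $Pu$ need not lie in $A_\om(G)$ at all, let alone depend boundedly on $u$. Note also the structural warning sign: if such a bounded transverse extension operator existed for every compact submanifold, your argument would prove smooth synthesis for \emph{every} compact submanifold of a Lie group, with the hypothesis on $B$ never used. The orbit hypothesis is not, as your last paragraph suggests, a uniformity convenience; it is the only source of maps that are known to preserve $A_\om(G)$ (translations, which are isometries by Proposition~\ref{transl}, and the elements of $B$, by assumption), and any correct proof must route the construction of approximants through these maps rather than through a chart.

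For comparison, the paper does not give a self-contained argument either: its proof is a one-line citation to \cite{Lu-Tu}, Theorem~4.8 and Corollary~4.9, with the remark that the affine transformations are assumed to preserve $A_\om(G)$. The argument being imported there exploits the identification $O\cong B/B_{x_0}$ via the orbit map $b\mapsto b(x_0)$: composition with this map carries $A_\om(G)$ into a (Beurling--)Fourier algebra of the transformation group, because each $b$ acts by operations that preserve the algebra, and the synthesis question for $O$ is then reduced to one for the stabilizer subgroup, where subgroup restriction/synthesis results apply. That is a genuinely different mechanism from a tubular-neighbourhood external product, and it is the mechanism your sketch needs but does not supply. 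To repair your write-up you would have to either (a) reconstruct the Lu--Tu transfer to $B/B_{x_0}$ in the weighted setting, checking that the composition operators involved are bounded on $A_\om(G)$ (uniform boundedness over $B$ follows from the closed graph theorem plus compactness, as you note), or (b) prove directly that your specific $P$, when built from group-theoretic data rather than an arbitrary chart, lands boundedly in $A_\om(G)$ --- which is essentially (a) again.
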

\begin{proof}
The proof repeats the arguments of the proof of \cite[Theorem 4.8 and Corollary 4.9]{Lu-Tu}, the affine
transformations of $ G $ (see \cite{Lu-Tu} for the definition) are assumed to preserve the algebra $ A_\om(G) $.
\end{proof}

\begin{corollary}\label{onepoint}
Let $ G $ be a Lie group and $ \om $ be a symmetric weight on $ \wh
G$.

{\bf (i)} If $\om\leq C\om_S^\alpha$ for some  $C$, $0<\alpha<1$ then
each one-point set is a set of spectral synthesis for $ A_\om(G) $.

{\bf (ii)} If $\om\geq C\om_S^\alpha$ for some  $C$, $\alpha\geq 1$ then
no one-point set is a set of spectral synthesis for $ A_\om(G) $
 \end{corollary}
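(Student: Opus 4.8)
\emph{Part (i).} I would obtain this from Theorem~\ref{beurling}. A one-point set $\{x_0\}$ is a compact subset of the $0$-dimensional submanifold $\{x_0\}$ of $G$ (we may assume $G$ has positive dimension, since otherwise $G$ is finite, $\{x_0\}$ is open, and the statement is trivial). Applying Theorem~\ref{beurling} with $m=0$ gives $\overline{J_\D(\{x_0\})}^{\,[\alpha]+1}=J_\om(\{x_0\})$, and as $0<\alpha<1$ we have $[\alpha]=0$, so $\overline{J_\D(\{x_0\})}=J_\om(\{x_0\})$. It then remains to check that $\{x_0\}$ is a set of smooth synthesis, i.e.\ $\overline{J_\D(\{x_0\})}=I_\om(\{x_0\})$, which is elementary: given $f\in I_\om(\{x_0\})$, choose $g_n\in\D(G)$ with $g_n\to f$ in $A_\om(G)$ (possible as $\trig\subset\D(G)$ is dense) and set $h_n=g_n-g_n(x_0)\mathbf{1}$, where $\mathbf{1}$ is the constant function (an element of $\trig\subset\D(G)$). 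Then $h_n\in J_\D(\{x_0\})$, and since evaluation at $x_0$ is continuous on $A_\om(G)\subset A(G)$ we have $g_n(x_0)\to f(x_0)=0$, so $h_n\to f$; thus $I_\om(\{x_0\})\subseteq\overline{J_\D(\{x_0\})}$ (the reverse inclusion being clear), and $I_\om(\{x_0\})=\overline{J_\D(\{x_0\})}=J_\om(\{x_0\})$. (Equivalently, one may apply Theorem~\ref{orbit} with $B$ the trivial group of affine transformations, whose orbits are the singletons, obtaining $I_\om(\{x_0\})^{[\alpha]+1}=J_\om(\{x_0\})$ at once.)

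\emph{Part (ii).} Here I would exhibit a non-trivial point derivation in $A_\om(G)^*$. By Proposition~\ref{transl} the right translations are isometric algebra automorphisms of $A_\om(G)$ acting transitively on $G$ and carrying $I_\om$, $J_\om$ of a singleton onto those of its translates, so it suffices to show $\{e\}$ is not a set of spectral synthesis; here one takes $G$ of positive dimension, so that its Lie algebra $\g$ is nonzero. Fix $0\neq X\in\g$ and regard it as an element of $\trig^\dagger$; by (\ref{eq:gLiemodule}) the functional $u\mapsto\dpair{X}{u}$ on $\trig$ is a point derivation at $e$. The plan is to establish: (1) $X\in A_\om(G)^*$, so that $\dpair{X}{\cdot}$ extends to a bounded point derivation on $A_\om(G)$; (2) $\mathrm{supp}(\dpair{X}{\cdot})\subseteq\{e\}$; and (3) $\dpair{X}{\cdot}$ does not vanish identically on $I_\om(\{e\})$ (the ideal of functions vanishing at $e$). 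Granting (1)--(3), the criterion recalled above applies with $\tau=\dpair{X}{\cdot}$ and $a$ any element of $I_\om(\{e\})$ with $\dpair{X}{a}\neq0$: then $\mathrm{supp}(\tau)\subset\{e\}\subset\mathrm{null}(a)$ yet $\tau(a)\neq0$, so $\{e\}$, and hence every one-point set, fails spectral synthesis.

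For (1): write $X=\sum_i a_iX_i$ in the orthonormal basis $X_1,\dots,X_n$ of $\g$ fixed in (\ref{eq:casimir}). Each $X_i$ is skew-Hermitian and $-\sum_i\pi(X_i)^2=c(\pi)I_\pi$ by (\ref{eq:casimireig}), so $0\leq-\pi(X_i)^2\leq c(\pi)I_\pi$ and $\opnorm{\pi(X_i)}\leq c(\pi)^{1/2}$; summing and using (\ref{equi}), the identity $\tau_S(\pi)=\norm{\pi}_1$ from (\ref{tas=no}), and $\alpha\geq1$, one gets
\[
\opnorm{\pi(X)}\leq\Bigl(\textstyle\sum_i|a_i|\Bigr)c_2^{1/2}\norm{\pi}_1\leq C_X\,(1+\norm{\pi}_1)^\alpha=C_X\,\om_S^\alpha(\pi),
\]
whence the hypothesis $\om\geq C\om_S^\alpha$ gives $\sup_\pi\opnorm{\pi(X)}/\om(\pi)\leq C_X/C<\infty$, i.e.\ $X\in A_\om(G)^*$; the Leibniz identity $\dpair{X}{uv}=\dpair{X}{u}v(e)+u(e)\dpair{X}{v}$ then persists on $A_\om(G)$ by density of $\trig$ and continuity. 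For (2): if $e\notin\mathrm{supp}(a)$ with $a\in A_\om(G)$, use regularity of $A_\om(G)$ (Theorem~\ref{reg}) to pick $\psi\in A_\om(G)$ with $\psi(e)=0$ and $\psi\equiv1$ on $\mathrm{supp}(a)$; then $a=a\psi$ and $\dpair{X}{a}=\dpair{X}{a}\psi(e)+a(e)\dpair{X}{\psi}=0$, so no point other than $e$ lies in $\mathrm{supp}(\dpair{X}{\cdot})$. For (3): $\dpair{X}{\mathbf{1}}=0$ (from $\mathbf{1}^2=\mathbf{1}$), while $\dpair{X}{\cdot}$ is not identically zero on $A_\om(G)$ since the pairing $\trig^\dagger\times\trig$ is nondegenerate; hence if $\dpair{X}{f}\neq0$ then $a=f-f(e)\mathbf{1}\in I_\om(\{e\})$ and $\dpair{X}{a}=\dpair{X}{f}\neq0$.

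The main obstacle is step (1) of part (ii): turning the Casimir estimate (\ref{equi}) into the bound $\opnorm{\pi(X)}\lesssim\om(\pi)$. This is precisely where $\alpha\geq1$ enters --- for $\alpha<1$ the weight $\om_S^\alpha$ grows too slowly to dominate $\norm{\pi}_1$, the point derivation $\dpair{X}{\cdot}$ need not extend to $A_\om(G)$, and indeed part (i) shows singletons are then of synthesis. The support computation (step 2) and the density/continuity bookkeeping are routine once regularity (Theorem~\ref{reg}) is in hand; in part (i) the only non-formal point is the elementary smooth synthesis of a singleton, after which Theorem~\ref{beurling} (or Theorem~\ref{orbit}) does the rest.
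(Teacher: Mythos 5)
Your proof is correct and follows essentially the same route as the paper: part (i) via the elementary smooth synthesis of a singleton combined with Theorem~\ref{beurling} (or Theorem~\ref{orbit}), and part (ii) via the same Casimir estimate $\opnorm{\pi(X_i)}\leq c(\pi)^{1/2}\leq c_2^{1/2}\norm{\pi}_1$ showing that elements of $\g$ give bounded point derivations at $e$ when $\alpha\geq 1$. The only cosmetic difference is that you conclude non-synthesis from the support criterion $J_\om(\{e\})^{\perp}=\{\tau:\supp(\tau)\subset\{e\}\}$, whereas the paper phrases it as $\ol{I_\om(\{e\})^2}\subsetneq I_\om(\{e\})$; these are equivalent standard deductions from the existence of a nonzero bounded point derivation.
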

\begin{proof}
Thanks to Proposition \ref{transl},
it is enough to prove the statements for the set $ \{e\} $, where $
e $ is the identity element in $ G $.

(i) We first note that $\{e\}$ is a $0$-dimensional set of smooth synthesis.  Indeed,
$\D(G)$ is dense in $A_\om(G)$ and hence for $u\in I_\om(\{e\})$ there exists a 
sequence $\{u_n\}\subset \D(G)$ which converges to  $u$.  Letting
$u_n'=u_n-u_n(e)$ we have $u_n'\in J_\D(\{e\}$ and
$\norm{u-u_n'}_{A_\om(G)}\leq \norm{u-u_n}_{A_\om(G)}+\norm{u_n(e)-u(e)}_\infty
\leq 2 \norm{u-u_n}_{A_\om(G)}$.  Hence it follows form Theorem \ref{beurling}
that $$I_\om(\{e\})=J_\D(\{e\})=J_\D(\{e\})^{[\alpha+1]}=J_\om(\{e\}).$$

Alternatively, we note that $\{e\}$ is an
orbit under the group $B=\{s\mapsto tst^{-1}\}$ of inner automorphisms, and
we can appeal directly to Theorem \ref{orbit}.

(ii) Assume now $ \alpha\geq 1 $. 
Let $X_1,\dots,X_n$ and $\OM$ be as in (\ref{eq:casimir}).  
Then for $\pi\in\what{G}$ we have by virtue of (\ref{eq:casimireig}), and
the fact that each $X_i$ is skew-hermitian, that 
$$  \frac{\noop{\pi(X_i)}}{\om(\pi)}\leq C\frac{(1+c(\pi))^{1/2}}{(1+\norm{\pi}_1)^\alpha}\leq
C'\frac{(1+\norm{\pi}_1)}{(1+\norm{\pi}_1)^\alpha}\leq C''$$ for some constants
$ C, C',C'' $, and hence $X_i\in A_{\om}(G)^* $.  Thus $\g\subset A_\om(G)$, where each 
element of $\g$ defines a bounded point derivation
at $e$.  We note for each non-zero $X\iin \g$, $I_\om(\{e\})\not\subset \ker X$ 
(indeed $J_\D(\{e\})\not\subset \ker X$), but
$I_\om(\{e\})^2\subset \ker X$.  Hence $\ol{I_\om(\{e\})^2}\subsetneq I_\om(\{e\})$.  \end{proof}

\begin{remark}\rm
(1) Corollary \ref{onepoint} is a generalization of the result about
spectral synthesis of one-point sets for $ A_\om (\Tee^n ) $, where
$ \om $ is the  weight on $ \Zee^n $ defined by
 $\om(k)=(1+\val{k})^{\alpha}  $ (see \cite[Ch.6.3]{reiter}).

(2) For the dimension weight $\ome(\pi)=d_\pi=\ome_S$ ($S=\{\pi_1\}$) on 
$G=\mathrm{SU}(2)$, the failure of spectral synthesis for $A_\om(G)$
at $\{e\}$ was noted in \cite{johnson}.  
\end{remark}



\medskip\noindent{\bf Operator synthesis and spectral synthesis for $A_\om (G)$.}
We let, again $G$ denote a compact group and $\ome$ be a bounded weight.
For a function $u\in A_\om (G)$ and $t$, $s\in G$ define
$$(Nu)(s,t)=u(st^{-1}).$$

Consider the projective tensor product $ L_{\om}^2(G)\hat\otimes L_\om^2(G)$.
Every $\Psi=\sum_if_i\otimes g_i\in  L_{\om}^2(G)\hat\otimes L_\om^2(G)$ can be identified with a function $\Psi:G\times G\to\Cee$ which admits a representation
$$\Psi(s,t)=\sum_{i=1}^\infty f_i(t)g_i(s),$$
 $\sum_i\norm{f_i}_{2,\om}\cdot\norm{g_i}_{2,\om}<\infty$. Such a representation defines a function marginally almost everywhere (m.a.e), i.e. two functions which coincide everywhere apart a marginally null set are identified.
Recall that a subset $E\subset G\times G$ is {\it marginally null} if $E\subset (M\times G)\cup (G\times N)$ and $m(M)=m(G)=0$, where $m$ is the Haar measure on $G$. 
\begin{proposition}
 $Nu\in L_{\om}^2(G)\hat\otimes L_\om^2(G)$.
\end{proposition}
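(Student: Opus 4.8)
The plan is to exhibit $Nu$ as an absolutely convergent combination of elementary tensors in $L^2_\om(G)\hat\otimes L^2_\om(G)$, using the factorisation of $A_\om(G)$ through weighted $L^2$-spaces established in Proposition~\ref{prop:ltwofactor}. Applying that proposition with $\om_1=\om_2=\om$ (legitimate, since $\om$ is bounded and $\om=(\om\cdot\om)^{1/2}$), I would write $u=f\ast g$ with $f,g\in L^2_\om(G)$. For fixed $s,t$, the substitution $r=s\sigma$ in $(f\ast g)(st\inv)=\int_G f(r)\,g(r\inv st\inv)\,dr$ gives
$$Nu(s,t)=u(st\inv)=\int_G (\rh(\sigma)\check g)(t)\,(\rh(\sigma)f)(s)\,d\sigma ,$$
where $\rh$ is right translation, $\rh(\sigma)h(x)=h(x\sigma)$, and $\check g(x)=g(x\inv)$. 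So $Nu$ should be the integral over the group $G$ of the elementary tensors $(\rh(\sigma)\check g)\otimes(\rh(\sigma)f)$.

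To make this rigorous I would first note, as in the proof of Proposition~\ref{transl}, that $\widehat{\rh(\sigma)h}(\pi)=\pi(\sigma)\widehat h(\pi)$, so right translation is isometric on $L^2_\om(G)$, and, approximating by elements of $\trig$, that $\sigma\mapsto\rh(\sigma)h$ is norm-continuous from $G$ into $L^2_\om(G)$; moreover $\|\check g\|_{2,\om}=\|g\|_{2,\om}$ (here one uses that $\om$ is symmetric, together with $d_{\bar\pi}=d_\pi$). Hence $\sigma\mapsto(\rh(\sigma)\check g)\otimes(\rh(\sigma)f)$ is a norm-continuous map from the compact group $G$ into $L^2_\om(G)\hat\otimes L^2_\om(G)$ of constant norm $\|f\|_{2,\om}\|g\|_{2,\om}$, so its Bochner integral $\Psi$ exists in $L^2_\om(G)\hat\otimes L^2_\om(G)$ with $\|\Psi\|\le\|f\|_{2,\om}\|g\|_{2,\om}$. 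The point requiring care is to check that $\Psi$, regarded as a function on $G\times G$ modulo marginally null sets, equals $Nu$. Since $\om$ is bounded there are continuous inclusions $L^2_\om(G)\hat\otimes L^2_\om(G)\hookrightarrow L^2(G)\hat\otimes L^2(G)\hookrightarrow L^2(G\times G)$, the last of which sends a tensor to its kernel; pushing $\Psi$ through these and applying Fubini shows that $\Psi$ is represented in $L^2(G\times G)$ by $(s,t)\mapsto\int_G(\rh(\sigma)\check g)(t)(\rh(\sigma)f)(s)\,d\sigma=u(st\inv)$, which is continuous because $u\in A_\om(G)\subset A(G)$. As the identification of $L^2_\om(G)\hat\otimes L^2_\om(G)$ with functions modulo marginally null sets respects almost-everywhere equality, this gives $Nu\in L^2_\om(G)\hat\otimes L^2_\om(G)$.

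A more computational route, sidestepping Bochner integration, would start from Remark~\ref{fourinv}: $u(st\inv)=\sum_{\pi\in\wh G}d_\pi\Tr(\pi(s)\pi(t)^*\widehat u(\pi))$. Factor $\widehat u(\pi)=B_\pi C_\pi$ via the polar decomposition, with $C_\pi=|\widehat u(\pi)|^{1/2}$ and $B_\pi=V_\pi C_\pi$, so that $\|B_\pi\|_2=\|C_\pi\|_2=\|\widehat u(\pi)\|_1^{1/2}$ in the Hilbert--Schmidt norm. Expanding the trace in an orthonormal basis $(e^\pi_j)$ of $\fH_\pi$ writes the $\pi$-term as $d_\pi\sum_{j,k}\phi^\pi_{jk}(t)\psi^\pi_{jk}(s)$, where each $\psi^\pi_{jk}$ is a matrix coefficient of $\pi$ and each $\phi^\pi_{jk}$ a conjugate of one; the Schur orthogonality relations (exactly as in the proof of Proposition~\ref{prop:fomeisalg}) give $\|\psi^\pi_{jk}\|_{2,\om}^2=\|C_\pi^*e^\pi_j\|^2\om(\pi)/d_\pi$ and $\|\phi^\pi_{jk}\|_{2,\om}^2=\|B_\pi e^\pi_j\|^2\om(\bar\pi)/d_\pi$. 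Summing, and applying Cauchy--Schwarz over $j$ together with $\om(\bar\pi)=\om(\pi)$, one obtains $\sum_\pi d_\pi\sum_{j,k}\|\phi^\pi_{jk}\|_{2,\om}\|\psi^\pi_{jk}\|_{2,\om}\le\sum_\pi d_\pi\om(\pi)\|\widehat u(\pi)\|_1=\|u\|_{A_\om(G)}<\infty$, displaying $Nu$ as an explicit absolutely convergent series of elementary tensors in $L^2_\om(G)\hat\otimes L^2_\om(G)$; here the realisation issue is immediate, since $\sum_\pi d_\pi\|\widehat u(\pi)\|_1<\infty$ forces $\sum_{\pi\in F}d_\pi\Tr(\pi(s)\pi(t)^*\widehat u(\pi))\to u(st\inv)$ uniformly on $G\times G$. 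I expect the only genuine --- though routine --- difficulties to be precisely the compatibility of the projective-tensor-product realisation with the integral in the first approach, and the bookkeeping in the norm computations in the second.
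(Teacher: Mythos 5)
Your second, computational route is essentially the paper's own proof: the paper likewise starts from the Fourier inversion formula $Nu(s,t)=\sum_\pi d_\pi\mathrm{Tr}(\pi(s)\pi(t^{-1})\hat u(\pi))$, splits $\hat u(\pi)=V(\pi)|\hat u(\pi)|$ through the square root $|\hat u(\pi)|^{1/2}$, expands the trace in an orthonormal basis to produce functions $\varphi^\pi_{i,j}$ (coefficients of $\pi$) and $\psi^\pi_{i,j}$ (conjugates of coefficients, i.e.\ coefficients of $\bar\pi$), and uses Schur orthogonality to get $\sum_{\pi,i,j}\|\varphi^\pi_{i,j}\|_{2,\om}^2=\sum_{\pi,i,j}\|\psi^\pi_{i,j}\|_{2,\om}^2=\|u\|_{A_\om(G)}$, from which Cauchy--Schwarz gives the projective bound; your bookkeeping matches this up to normalisation, and your observation that the $t$-side factors live over $\bar\pi$, so that $\om(\bar\pi)=\om(\pi)$ is actually needed, is a point the paper glosses over with ``similar arguments.'' Your first route, via Proposition~\ref{prop:ltwofactor} and the vector-valued integral $\int_G(\rh(\sigma)\check g)\otimes(\rh(\sigma)f)\,d\sigma$, is genuinely different from the paper and is attractive: it gives the norm estimate $\|Nu\|\le\|f\|_{2,\om}\|g\|_{2,\om}$ in one line once one has (a) strong continuity of translation on $L^2_\om(G)$ and the isometry of $g\mapsto\check g$ (again symmetry of $\om$), and (b) the identification of the Bochner integral with the function $Nu$ up to marginally null sets. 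Point (b) is where you are a little quick: after matching $\Psi$ with $Nu$ almost everywhere on $G\times G$ via $L^2(G\times G)$ and Fubini, you still need the standard but nontrivial fact that an element of $L^2_\om(G)\hat\otimes L^2_\om(G)$ which agrees a.e.\ with a continuous function agrees with it m.a.e.; this deserves a reference or an argument, but it does not affect correctness. Either route establishes the proposition.
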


\begin{proof}
Using  the Fourier inversion formula we have
$$Nu(s,t)=\sum_{\pi\in\wh{G}}\text{Tr}(\pi(s)\pi(t^{-1})\hat u(\pi))d_\pi.$$
Let $\{e_i^\pi:i=1,\ldots d_\pi\}$ be an orthonormal basis in $\H_\pi$. Consider for each $\pi\in \wh{G}$ the polar decomposition $\hat u(\pi)=V(\pi)|\hat u(\pi)|$.  Then
\begin{eqnarray*}
Nu(s,t)&=& \sum_{\pi\in\wh{G}}\sum_{i=1}^{d_\pi}(|\hat u(\pi)|^{1/2}\pi(s)e_i^\pi,|\hat u(\pi)|^{1/2}
V(\pi)^*\pi(t)e_i^\pi) d_\pi\\
&=&\sum_{\pi\in\wh{G}}\sum_{i,j=1}^{d_\pi}(|\hat
u(\pi)|^{1/2}\pi(s)e_i^\pi,e_j^\pi)(e_j^\pi,|\hat
u(\pi)|^{1/2}V(\pi)^*\pi(t)e_i^\pi) d_\pi
\end{eqnarray*}
Let 
$$\varphi_{i,j}^\pi(s)=(|\hat
u(\pi)|^{1/2}\pi(s)e_i^\pi,e_j^\pi)d_\pi^{1/2}$$
and
$$\psi_{i,j}^\pi(t)=(e_j^\pi,|\hat
u(\pi)|^{1/2}V(\pi)^*\pi(t)e_i^\pi) d_\pi^{1/2}.$$
In order to show
the statement we have to prove that
$$\sum_{\pi\in\wh{G}}\sum_{i,j=1}^{d_\pi}\norm{\varphi_{i,j}^\pi}_{H}^2<\infty\text { and }
\sum_{\pi\in\wh{G}}\sum_{i,j=1}^{d_\pi}\norm{\varphi_{i,j}^\pi}_{H}^2<\infty.$$
Using the orthogonality property of matrix coefficients one can see that
$$(\hat{\varphi}_{i,j}^\pi(\rho)e_k^\pi,e_l^\pi)=
\frac{1}{d_\pi^{1/2}}(e_k^\pi,|\hat u(\pi)|^{1/2}e_j^\pi)\delta_{il}\delta_{\pi\rho}.$$
Hence
\begin{eqnarray*}
\norm{\hat{\varphi}_{i,j}^\pi(\pi)}_2^2&=&\sum_{k=1}^{d_\pi}\norm{\hat{\varphi}_{i,j}^\pi(\pi)e_k^\pi}^2= \sum_{k,l=1}^{d_\pi}(\hat{\varphi}_{i,j}^\pi(\pi)e_k^\pi, e_l^\pi)(e_l^\pi, \hat{\varphi}_{i,j}^\pi(\pi)e_k^\pi)\\
&=& \sum_{k=1}^{d_\pi}(\hat{\varphi}_{i,j}^\pi(\pi)e_k^\pi, e_i^\pi)(e_i^\pi, \hat{\varphi}_{i,j}^\pi(\pi)e_k^\pi)\\
&=& \frac{1}{d_\pi} \sum_{k=1}^{d_\pi}(e_k^\pi,|\hat u(\pi)|^{1/2}e_j^\pi)(|\hat u(\pi)|^{1/2}e_j^\pi,e_k^\pi)\\
&=& \frac{1}{d_\pi}(|\hat u(\pi)|e_j^\pi,e_j^\pi)
\end{eqnarray*}

and
$$\norm{\varphi_{i,j}^\pi}_{2,\om}^2=\om(\pi)(|\hat u(\pi)|e_j^\pi,e_j^\pi)$$
giving
\begin{eqnarray*}
\sum_{\pi\in\wh{G}}\sum_{i,j=1}^{d_\pi}\norm{\varphi_{i,j}^\pi}_{2,\om}^2&=&\sum_{\pi\in\wh{G}}\sum_{i,j=1}^{d_\pi}\om(\pi)(|\hat u(\pi)|e_j^\pi,e_j^\pi)\\
&=&\sum_\pi d_\pi \om(\pi)\norm{\hat u(\pi)}_1=\norm{ u}_{A_\om (G)}
\end{eqnarray*}

Similar arguments shows that
$$\sum_{\pi\in\wh{G}}\sum_{i,j=1}^{d_\pi}\norm{\varphi_{i,j}^\pi}_{2,\om}^2=\norm{ u}_{A_\om (G)}.$$
The statement is proved.

\end{proof}

Let $H=L_\om^2(G)$. Then the projective tensor product $H\hat\otimes
H$ can be identified with the trace class operators on $H$. The
space $\fL(H)$ of linear bounded operators on $H$ is a dual space of
$H\hat\otimes H$ with the duality given by
$$\langle T,f\otimes g\rangle=(Tf,\bar g).$$

Let $T\in A_\om (G)^*$. Define a linear
bounded operator $S(T)$ on $H$ by
$\widehat{S(T)f}(\pi)=\frac{1}{\om(\pi)} \hat f(\pi)\pi(T)$, where $\bar\pi$ is the conjugate representation.
Since $\sup_{\pi\in\wh{G}}\frac{\noop{\pi(T)}}{\om(\pi)}<\infty$,
$S(T)$ is bounded.

\begin{lemma}\label{nu}
Let $u\in A_\om (G)$, $T\in A_\om (G)^*$. Then $$\langle T,u\rangle=\langle S(T),Nu\rangle.$$
\end{lemma}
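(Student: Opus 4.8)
The plan is to unwind both sides of the claimed identity using the Fourier-theoretic descriptions already established, and to observe that they match term by term over $\what G$. First I would recall that $u\in A_\om(G)$ means $\sum_{\pi}d_\pi\om(\pi)\|\hat u(\pi)\|_1<\infty$, and that $T\in A_\om(G)^*$ corresponds, via \eqref{eq:trigdual}, to the operator field $(\pi(T))_{\pi\in\what G}$ with $\sup_\pi\opnorm{\pi(T)}/\om(\pi)<\infty$; by \eqref{eq:trigdual1} the left-hand side is $\dpair{u}{T}=\sum_{\pi\in\what G}d_\pi\Tr(\hat u(\pi)\pi(T))$, which converges absolutely by H\"older since $|\Tr(\hat u(\pi)\pi(T))|\le\|\hat u(\pi)\|_1\opnorm{\pi(T)}$.

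Next I would compute the right-hand side $\langle S(T),Nu\rangle$ directly from the duality $\langle A,f\otimes g\rangle=(Af,\bar g)$ on $H\hat\otimes H=\fL(H)^*$, together with the explicit expansion of $Nu$ obtained in the preceding proposition. There $Nu$ was written as $\sum_{\pi}\sum_{i,j}\varphi_{i,j}^\pi\otimes\psi_{i,j}^\pi$ with $\varphi_{i,j}^\pi(s)=(|\hat u(\pi)|^{1/2}\pi(s)e_i^\pi,e_j^\pi)d_\pi^{1/2}$ and $\psi_{i,j}^\pi(t)=(e_j^\pi,|\hat u(\pi)|^{1/2}V(\pi)^*\pi(t)e_i^\pi)d_\pi^{1/2}$, where $\hat u(\pi)=V(\pi)|\hat u(\pi)|$. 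Since that series converges in $H\hat\otimes H$, applying the bounded functional $S(T)$ gives $\langle S(T),Nu\rangle=\sum_{\pi}\sum_{i,j}(S(T)\psi_{i,j}^\pi,\overline{\psi_{i,j}^\pi})$ — wait, more precisely $\langle S(T),\varphi\otimes\psi\rangle=(S(T)\psi,\overline{\varphi})$ according to the stated duality, so I must be careful which factor is the "$f$" and which is the "$g$" in $Nu(s,t)=\sum f_i(t)g_i(s)$; matching the convention $\Psi(s,t)=\sum f_i(t)g_i(s)$ used there, $f_i=\psi^\pi_{i,j}$ and $g_i=\varphi^\pi_{i,j}$, so each term contributes $(S(T)\psi_{i,j}^\pi,\overline{\varphi_{i,j}^\pi})$. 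I would then use $\widehat{S(T)\psi}(\rho)=\frac{1}{\om(\rho)}\hat\psi(\rho)\bar\rho(T)$ and the Plancherel/orthogonality formula $(\alpha,\beta)=\sum_\rho d_\rho\Tr(\hat\alpha(\rho)\hat\beta(\rho)^*)$ together with the earlier computation of $\hat\varphi_{i,j}^\pi(\rho)$ and the analogous one for $\hat\psi_{i,j}^\pi(\rho)$ (only the $\rho=\pi$ term survives), summing over $i,j$ to collapse the result to $\sum_\pi d_\pi\Tr(\hat u(\pi)\pi(T))$, using Corollary~\ref{prop:ggproperties1}-type bookkeeping to identify $\bar\pi(T)$ with the appropriate adjoint so the traces agree.

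The one genuinely delicate point — and the step I expect to be the main obstacle — is the bookkeeping relating $\bar\pi(T)$ (the conjugate representation evaluated at $T$, appearing in the definition of $S(T)$) back to $\pi(T)$, and keeping the polar factor $V(\pi)$ straight through the trace computation; it is essentially a transpose/conjugation identity for matrix coefficients, and the orthogonality relations for $\hat\psi_{i,j}^\pi$ (which the preceding proposition asserts but does not fully write out) must be invoked in a form compatible with $\bar\pi$. Once the two sides are each reduced to $\sum_{\pi\in\what G}d_\pi\Tr(\hat u(\pi)\pi(T))$, absolute convergence of every intermediate rearrangement is guaranteed by $\sum_\pi d_\pi\om(\pi)\|\hat u(\pi)\|_1<\infty$ and $\opnorm{\pi(T)}\le\|T\|_{A_\om^*}\om(\pi)$, so the interchanges of summation are justified, and the identity $\langle T,u\rangle=\langle S(T),Nu\rangle$ follows.
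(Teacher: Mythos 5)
Your skeleton is sound and your convergence bookkeeping is fine: the left-hand side is indeed $\sum_{\pi\in\what G}d_\pi\Tr(\hat u(\pi)\pi(T))$ by (\ref{eq:trigdual1}), and the estimates $\opnorm{\pi(T)}\le\dfomenorm{T}\om(\pi)$ and $\sum_\pi d_\pi\om(\pi)\|\hat u(\pi)\|_1<\infty$ do justify every rearrangement you invoke. The problem is that the decisive step --- showing that the pairing of $S(T)$ against the tensor expansion of $Nu$ actually collapses to that same sum --- is exactly the part you defer, and the one concrete assertion you make about it is off. The function $\psi^\pi_{i,j}$ is the complex \emph{conjugate} of a matrix coefficient of $\pi$, hence a matrix coefficient of $\bar\pi$, so its Fourier transform is supported at $\rho=\bar\pi$, not at $\rho=\pi$ as your parenthetical claims (they differ unless $\pi\simeq\bar\pi$). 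Consequently the trace computation for each $\pi$ takes place at $\bar\pi$, and whether $\frac{1}{\om(\rho)}\hat f(\rho)\rho(T)$ there reproduces $\pi(T)$ rather than some transposed/conjugated version of it is precisely the ``main obstacle'' you name but do not resolve. Since that identity \emph{is} the content of the lemma, the proposal has a genuine gap rather than a complete argument.

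It is also a genuinely different route from the paper's. The paper observes that both sides are continuous linear functionals of $u$ (the right-hand side because the preceding proposition gives $\|Nu\|_{L^2_\om\hat\otimes L^2_\om}\le\|u\|_{A_\om}$), so by density of $\trig$ it suffices to take $u$ a single matrix coefficient $u(t)=(\pi(t)e_i^\pi,e_j^\pi)$. Then $Nu(s,t)=\sum_{k}(\pi(s)e_k^\pi,e_j^\pi)(e_i^\pi,\pi(t)e_k^\pi)$ is a \emph{finite} sum of elementary tensors of matrix coefficients, the transforms $\hat c^\pi_{jk}(\rho)$ are given exactly by the orthogonality relations, and the pairing reduces in two lines to $(T_\pi e_i^\pi,e_j^\pi)=\langle T,u\rangle$; no polar decomposition of $\hat u(\pi)$ and no infinite trace manipulation is needed. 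If you want to keep your route for general $u$, you must actually carry out the computation with $\hat\psi^\pi_{i,j}$ supported at $\bar\pi$ and verify the conjugation conventions; otherwise, reduce to matrix coefficients first, which is what makes the verification finite and explicit.
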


\begin{proof}
It is enough to show the statement for a matrix coefficient $u(t)=(\pi(t)e_i^\pi,e_j^\pi)=:c_{ji}^\pi$, where
$\{e_i^\pi,i=1,\ldots d_\pi\}$ is an orthonormal basis in $\H_\pi$.
We have $\langle T,u\rangle=(T_\pi e_i^\pi,e_j^\pi)$.
$$Nu(s,t)=u(st^{-1})=(\pi(t^{-1})e_i^\pi,\pi(s^{-1})e_j^\pi)=\sum_{k=1}^{d_\pi}
(\pi(s)e_k^\pi,e_j^\pi)(e_i^\pi, \pi(t)e_k^\pi).$$
Hence
$$\langle S(T),Nu\rangle=\sum_{k=1}^{d_\pi}(S(T)c_{jk}^{\pi},c_{ik}^{\pi})=
\sum_{k=1}^{d_\pi}\sum_{\rho\in\wh{G}}d_\rho\text{Tr}(\hat{c}_{jk}^{\pi}(\rho)
T_\rho  \hat{c}_{ik}^{\pi}(\rho)^*).$$

Since
$(\hat{c}_{jk}^{\pi}(\rho)e_m^\pi,e_l^\pi)=\frac{1}{d_\pi}
\delta_{\pi\rho}\delta_{kl}\delta_{jm}$ and 
 $$\sum_{k=1}^{d_\pi}\hat{c}_{ik}^{\pi}(\pi)^*\hat{c}_{jk}^{\pi}(\pi)(f)=\frac{1}{d_\pi}(f,e_j^\pi)e_i^\pi$$
we have
\begin{eqnarray*}
\langle S(T),Nu\rangle&=&(T_\pi e_i^\pi,e_j^\pi).
\end{eqnarray*}
\end{proof}

Let $E\subset G$. We set
$$E^*=\{(s,t)\in G\times G: st^{-1}\in E\}.$$
\begin{definition}\rm
If $S\in \fL(L_\om^2(G))$,we define  the support of $S$ (written
$\text{supp}_\fL(S)$) as the set of all points $(s,t)\in G\times G$
with the following property: for any neighbourhoods $U$ of $s$, $V$
of $t$, there are $f$, $g$ in $L_\om^2(G)$ such that
$\text{supp}(f)\subset V$, $\text{supp}(g)\subset U$ and $(Sf,g)\ne
0$.
\end{definition}
\begin{lemma}\label{support}
Let $T\in A_\om(G)^*$. Then $\text{supp}_\fL(S(T))\subset
\text{supp}( T)^*$.
\end{lemma}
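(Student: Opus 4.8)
The plan is to prove the contrapositive of the inclusion: if $(s_0,t_0)\notin\text{supp}(T)^{*}$, i.e. $x_0:=s_0t_0^{-1}\notin\text{supp}(T)$, then $(s_0,t_0)\notin\text{supp}_\fL(S(T))$. Recall from the generalities on supports recorded at the start of this section that $x_0\notin\text{supp}(T)$ means precisely that there is an open set $W\ni x_0$ with $\langle T,v\rangle=0$ for every $v\in A_\om(G)$ whose support lies in $W$. First I would fix such a $W$ and, using continuity of $(u,v)\mapsto uv^{-1}$ on $G\times G$ together with $s_0t_0^{-1}\in W$, choose open sets $U\ni s_0$ and $V\ni t_0$ with $\overline U\cdot\overline V^{-1}\subseteq W$ (the closures being automatically compact). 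The goal is then to show that, for these $U,V$, the defining property of $\text{supp}_\fL(S(T))$ fails at $(s_0,t_0)$.

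The engine of the argument is the identity
\[
(S(T)f,g)=\langle T,\,g\ast f^{*}\rangle\qquad(f,g\in L^2_\om(G)),\quad f^{*}(s):=\overline{f(s^{-1})},
\]
which I would verify by a direct Fourier-side computation: since $\widehat{f^{*}}(\pi)=\hat f(\pi)^{*}$ one gets $\widehat{g\ast f^{*}}(\pi)=\hat f(\pi)^{*}\hat g(\pi)$; the factor $1/\om(\pi)$ built into $S(T)$ cancels the weight $\om(\pi)$ of the $L^2_\om$-inner product; and trace cyclicity together with the pairing (\ref{eq:trigdual1}) gives the equality (it suffices to check it on matrix coefficients, both sides being continuous in $f,g$, the right-hand one because of Proposition \ref{prop:ltwofactor}). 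Two facts then come for free. First, $g\ast f^{*}\in A_\om(G)$: indeed $f^{*}\in L^2_\om(G)$ since $\|\widehat{f^{*}}(\pi)\|_2=\|\hat f(\pi)\|_2$, and $A_\om(G)=L^2_\om(G)\ast L^2_\om(G)$ by Proposition \ref{prop:ltwofactor} (applied with $\om_1=\om_2=\om$); so $\langle T,g\ast f^{*}\rangle$ is meaningful. Second, $\text{supp}(g\ast f^{*})\subseteq\overline{\text{supp}(g)}\cdot\overline{\text{supp}(f)}^{-1}$.

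Finally I would take any $f,g\in L^2_\om(G)$ with $\text{supp}(f)\subseteq V$ and $\text{supp}(g)\subseteq U$, the localizations prescribed by the definition of $\text{supp}_\fL$ at $(s_0,t_0)$: the support estimate gives $\text{supp}(g\ast f^{*})\subseteq\overline U\cdot\overline V^{-1}\subseteq W$, so $g\ast f^{*}$ is an element of $A_\om(G)$ supported inside $W$, whence $(S(T)f,g)=\langle T,g\ast f^{*}\rangle=0$. As $f,g$ were arbitrary subject to those support constraints, $(s_0,t_0)\notin\text{supp}_\fL(S(T))$, as required. The main obstacle is the displayed identity, and within it arranging the adjoints and the reflection $f\mapsto f^{*}$ so that $g\ast f^{*}$ is supported on the $\text{supp}(g)\cdot\text{supp}(f)^{-1}$ side — a neighbourhood of $s_0t_0^{-1}$ — consistently with the definition $E^{*}=\{(s,t):st^{-1}\in E\}$ and with the convention in the definition of $\text{supp}_\fL$ ($g$ localized near the first coordinate, $f$ near the second); everything else (producing $W$ from the definition of $\text{supp}(T)$, and $g\ast f^{*}\in A_\om(G)$ via Proposition \ref{prop:ltwofactor}) is routine. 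Alternatively, the identity can be extracted from Lemma \ref{nu} by writing $(S(T)f,g)=\langle S(T),f\otimes\bar g\rangle$ and comparing $f\otimes\bar g$ with $N(g\ast f^{*})$ in the pairing against $S(T)$.
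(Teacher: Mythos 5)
You follow the paper's strategy exactly: rewrite the bilinear form $(S(T)f,g)$ as $T$ paired against a convolution of $f$ with a reflected conjugate of $g$, bound the support of that convolution, and localize. (You run it as a contrapositive where the paper argues by contradiction, which is immaterial.) You also correctly identify where the whole weight of the proof sits, namely the orientation of the displayed identity --- but that is precisely the step that does not hold as you state it. With the paper's conventions ($\hat u(\pi)=\int_G u(s)\pi(s^{-1})\,ds$, hence $\widehat{f\ast h}(\pi)=\hat h(\pi)\hat f(\pi)$, and $\widehat{S(T)f}(\pi)=\om(\pi)^{-1}\hat f(\pi)\pi(T)$) one computes
\[
(S(T)f,g)=\sum_{\pi\in\what{G}}d_\pi\,\Tr\bigl(\hat f(\pi)\,\pi(T)\,\hat g(\pi)^{*}\bigr)
=\langle T,\,f\ast g^{*}\rangle ,
\qquad
\langle T,\,g\ast f^{*}\rangle=\sum_{\pi\in\what{G}}d_\pi\,\Tr\bigl(\hat f(\pi)^{*}\,\hat g(\pi)\,\pi(T)\bigr),
\]
and no cyclic permutation of the trace turns the second expression into the first (take $\hat f(\pi)=E_{11}$, $\hat g(\pi)=E_{12}$, $\pi(T)=E_{21}$ in a $\pi$ with $d_\pi\geq 2$: the two values are $0$ and $1$). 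So the identity you rely on, $(S(T)f,g)=\langle T,g\ast f^{*}\rangle$, is false under these conventions; the correct one localizes $f\ast g^{*}$ in $\mathrm{supp}(f)\cdot\mathrm{supp}(g)^{-1}\subset VU^{-1}$, a neighbourhood of $t_0s_0^{-1}$ rather than of $s_0t_0^{-1}$ --- the wrong side for $E^{*}=\{(s,t):st^{-1}\in E\}$ given that the definition of $\text{supp}_\fL$ puts $f$ near $t$ and $g$ near $s$.

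In fairness, this orientation clash is already present in the source: the paper's own proof quietly takes $f$ supported in $U$ and $g$ in $V$, the reverse of its own definition of $\text{supp}_\fL$, and its definition of $S(T)$ carries a dangling reference to $\bar\pi$ suggesting a lost conjugation. The lemma is rescued by realigning a convention (swap the roles of $U$ and $V$ in the definition of $\text{supp}_\fL$, or put $\pi(T)$ on the left in $\widehat{S(T)f}(\pi)$), not by the trace-cyclicity step you invoke; as written, your computation, once corrected, yields $\text{supp}_\fL(S(T))\subset\{(s,t):ts^{-1}\in\mathrm{supp}(T)\}$. Everything else in your argument --- producing $W$, $U$, $V$, the membership $g\ast f^{*}\in A_\om(G)$ via Proposition \ref{prop:ltwofactor}, and the support estimate for the convolution --- is correct and matches the paper.
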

\begin{proof}
The proof is similar to one given in \cite[Theorem~4.6]{Ni-Lu}, we include it for the completeness.
It follows from the definition that
$(S(T)f,g)=\langle T,f\ast\bar{\check g}\rangle$ for $f$, $g\in L_\om ^2(G)$, where
$\check g(s)=g(s^{-1})$.

If $(s,t)\in \text{supp}_\fL(S(T))$ but $st^{-1}\notin
\text{supp}(T)^*$, find neighbourhoods $U$ of $s$ and $V$ of $t$
such that $UV^{-1}\cap W=\emptyset$, where $W$ is a neighbourhood of
$\text{supp}(T)$, and then take $f$, $g\in L_\om^2(G)$ supported in
$U$ and $V$ respectively and $(S(T)f,g)\ne 0$. As $(S(T)f,g)=\langle
T, u\rangle$, where $u=  f\ast\bar{\check g}$,
$\text{supp}(u)\subset \overline{UV^{-1}}$, so $u$ vanishes in a
neighbourhood of  $\text{supp}(T)$ and hence $\langle T,u\rangle=0$
giving a contradiction.
\end{proof}

\begin{proposition}\label{sup}
let $S$ be an operator on $L_\om^2(G)$ and let $E$, $F\subset G$ be
closed. Then if $E\times F$ is disjoint from $\text{supp}_\fL(S)$
then $(Sf, g)=0$ for any $f$, $g\in L_\om^2(G)$ supported in $F$ and
$E$ respectively.
 \end{proposition}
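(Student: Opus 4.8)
The plan is to reduce to a finite cover of $E\times F$ by ``vanishing rectangles'' and then to split $f$ and $g$ by a partition of unity adapted to that cover. First I would record that, $G$ being compact, $E$ and $F$ are compact, hence so is $E\times F$. Directly from the definition, the open set $(G\times G)\setminus\text{supp}_\fL(S)$ is the union of all open rectangles $U\times V$ which are \emph{vanishing}, i.e. $(S\varphi,\psi)=0$ for every $\varphi,\psi\in L^2_\om(G)$ with $\text{supp}(\varphi)\subset V$ and $\text{supp}(\psi)\subset U$ (note that the first coordinate pairs with the second argument $\psi$ of $S$, the second coordinate with the first argument $\varphi$; this matches the roles of $E$ and $F$ in the statement). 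Since $E\times F$ misses $\text{supp}_\fL(S)$, compactness yields finitely many vanishing rectangles $U_1\times V_1,\dots,U_n\times V_n$ covering $E\times F$; in particular $\{U_i\}$ covers $E$ and $\{V_i\}$ covers $F$.

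The next point is that $E\times F$ is covered by rectangles but not, a priori, by an honest ``grid'', so the two one-sided covers $\{U_i\}$ and $\{V_i\}$ cannot be refined independently. To get around this I would, for each $t\in F$, set $I(t)=\{i:t\in V_i\}$, so that $E\subset\bigcup_{i\in I(t)}U_i$ and $N(t):=\bigcap_{i\in I(t)}V_i$ is an open neighbourhood of $t$ contained in every $V_i$ with $i\in I(t)$. Choosing $t_1,\dots,t_p\in F$ with $F\subset\bigcup_k N(t_k)$, pick a smooth partition of unity $(b_k)_{k=1}^p$ on a neighbourhood of $F$ subordinate to $(N(t_k))_k$, and for each $k$ a smooth partition of unity $(a^{(k)}_i)_{i\in I(t_k)}$ on a neighbourhood of $E$ subordinate to $(U_i)_{i\in I(t_k)}$. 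Granting that multiplication by a smooth function carries $L^2_\om(G)$ into itself, we then have $b_kf\in L^2_\om(G)$ with $\text{supp}(b_kf)\subset N(t_k)\cap F$, $a^{(k)}_ig\in L^2_\om(G)$ with $\text{supp}(a^{(k)}_ig)\subset U_i\cap E$, and, since $\sum_k b_k\equiv1$ on $\text{supp}(f)\subset F$ and $\sum_{i\in I(t_k)}a^{(k)}_i\equiv1$ on $\text{supp}(g)\subset E$, the identities $f=\sum_k b_kf$ and $g=\sum_{i\in I(t_k)}a^{(k)}_ig$ in $L^2_\om(G)$. Expanding,
\[
(Sf,g)=\sum_{k=1}^p\sum_{i\in I(t_k)}\bigl(S(b_kf),\,a^{(k)}_ig\bigr),
\]
and each term vanishes because $b_kf$ is supported in $N(t_k)\subset V_i$ and $a^{(k)}_ig$ is supported in $U_i$, while $U_i\times V_i$ is a vanishing rectangle. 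Hence $(Sf,g)=0$.

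I expect the main obstacle to be precisely the multiplier step $C^\infty(G)\cdot L^2_\om(G)\subset L^2_\om(G)$ used to localise $f$ and $g$ within $L^2_\om(G)$. This is transparent when $\om$ has polynomial growth, $\om\leq C\om_S^\alpha$, since then $L^2_\om(G)$ is a Sobolev-type space on which smooth cut-off functions act boundedly (one checks this from the weight inequality and rapid decay of the Fourier data of a smooth function), and this is exactly the range of weights relevant to the spectral synthesis results of this paper; for a completely general bounded weight this localisation point would have to be argued separately. By contrast, the combinatorial passage through the neighbourhoods $N(t_k)$ is routine once the grid mismatch is noticed, and the only further care needed is keeping straight the orientation of the two coordinates of $\text{supp}_\fL(S)$ as flagged above.
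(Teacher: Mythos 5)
Your argument is, in substance, the same localization-by-compactness argument that the paper invokes by citing Arveson's Proposition 2.2.5, and the combinatorial part is handled correctly: the complement of $\text{supp}_\fL(S)$ is indeed the union of the ``vanishing'' open rectangles, your passage through the neighbourhoods $N(t_k)=\bigcap_{i\in I(t_k)}V_i$ is a valid way to resolve the fact that a cover of $E\times F$ by rectangles need not be a grid, and you have the orientation of the two coordinates (which of $E$, $F$ goes with $f$ and which with $g$) right.

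The one point that is not routine is exactly the one you flag and do not close: the decompositions $f=\sum_k b_kf$ and $g=\sum_{i}a^{(k)}_ig$ must take place \emph{inside} $L^2_\om(G)$, i.e.\ you need multiplication by smooth cut-offs to map $L^2_\om(G)$ boundedly into itself. This is not cosmetic for the statement as written, because the standing hypothesis of this subsection is only that $\om$ is a bounded weight. When $\om\leq C\om_S^\alpha$ the multiplier property does hold, by the Sobolev-type description of $L^2_{\om}(G)$ (compare the estimate of $\|u\|_{2,\om^2}$ in terms of $\|(1-\Om)^{\alpha/2}u\|_2$ in the proof of Theorem \ref{beurling}), so your proof is complete in the regime where the proposition is actually applied. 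For a general bounded weight it can genuinely fail: already for $G=\Tee$ and $\om(n)=e^{|n|}$ the space $L^2_\om(\Tee)$ consists of boundary values of functions holomorphic on an annulus, so it contains no nonzero element supported in a proper closed subset and admits no smooth cut-offs; in such cases the conclusion is vacuously true, but your partition-of-unity mechanism is unavailable and a separate (trivial) remark is needed. The paper's own ``proof'' is only a pointer to Arveson's $L^2$ setting, where localization by characteristic functions is automatic, so this adaptation point is left implicit there as well; to make your argument complete you should either impose the polynomial-growth hypothesis or explicitly dispose of the weights for which localization in $L^2_\om(G)$ fails.
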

\begin{proof}
The proof is similar to \cite[Proposition~2.2.5]{arv}.
\end{proof}

\begin{definition} We say that $E\subset G\times G$ is  a set of operator synthesis with respect to the weight
$\om$ if $\langle S,F\rangle$ for any $S\in \fL(L^2_\om(G))$ and
$F\in L^2_\om(G)\hat\otimes L^2_\om(G)$ with $\text{supp}_\fL
T\subset E$ and $F|_E=0$ m.a.e.
\end{definition}

\begin{proposition}\label{opsynth}
Let $E\subset G$ be closed. If $E^*$ is a set of operator synthesis
with respect to a weight $\om$ on $\wh G$ then $E$ is a set of
spectral synthesis for $A_\om (G)$.
 \end{proposition}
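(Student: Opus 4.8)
The plan is to transfer the problem of spectral synthesis for $A_\om(G)$ to the problem of operator synthesis for $E^*$ using the tools already assembled: the map $N:A_\om(G)\to L^2_\om(G)\hat\otimes L^2_\om(G)$, the map $S:A_\om(G)^*\to\fL(L^2_\om(G))$, the pairing identity of Lemma~\ref{nu}, and the support containment of Lemma~\ref{support}. Recall from Section~\ref{weak} that $E$ is a set of spectral synthesis for $A_\om(G)$ if and only if $\langle T,u\rangle=0$ whenever $T\in A_\om(G)^*$ with $\supp(T)\subset E$ and $u\in A_\om(G)$ with $\supp(T)\subset E\subset\mathrm{null}(u)$; so it suffices to show precisely this implication.

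First I would take such a pair $T,u$. By Lemma~\ref{nu} we have $\langle T,u\rangle=\langle S(T),Nu\rangle$, so it is enough to show $\langle S(T),Nu\rangle=0$. For this I want to invoke the hypothesis that $E^*$ is a set of operator synthesis with respect to $\om$, which requires checking its two premises for the pair $S(T)$ and $Nu$. The support condition $\supp_\fL(S(T))\subset E^*$ is immediate from Lemma~\ref{support}: $\supp_\fL(S(T))\subset\supp(T)^*\subset E^*$, the last inclusion because $\supp(T)\subset E$ and the map $E\mapsto E^*=\{(s,t):st^{-1}\in E\}$ is monotone. It remains to verify the vanishing condition $Nu|_{E^*}=0$ marginally almost everywhere. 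Since $u$ vanishes on $E$ (as $E\subset\mathrm{null}(u)$) and $u$ is continuous (being in $A(G)$, as $\om$ is bounded), the function $(s,t)\mapsto Nu(s,t)=u(st^{-1})$ vanishes on the closed set $E^*$ everywhere, hence in particular m.a.e.; one should note that the m.a.e.\ equivalence class of $Nu$ as an element of $L^2_\om(G)\hat\otimes L^2_\om(G)$ agrees with this continuous representative, which follows from the explicit series representation of $Nu$ constructed in the proposition preceding Lemma~\ref{nu}.

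With both premises established, the definition of operator synthesis for $E^*$ yields $\langle S(T),Nu\rangle=0$, and therefore $\langle T,u\rangle=0$, completing the proof. The main subtlety — rather than a genuine obstacle — is the bookkeeping around the marginal-null sets: one must be careful that "$Nu|_{E^*}=0$ m.a.e." is the correct reading of the operator-synthesis hypothesis and that the continuous function $u(st^{-1})$ is indeed the function represented by $Nu\in L^2_\om(G)\hat\otimes L^2_\om(G)$, so that the everywhere-vanishing on the closed set $E^*$ does the job. Everything else is a direct concatenation of Lemma~\ref{nu}, Lemma~\ref{support}, monotonicity of $E\mapsto E^*$, and the definition of operator synthesis.
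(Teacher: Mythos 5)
Your proposal is correct and follows exactly the paper's argument: combine Lemma \ref{nu} (the pairing identity $\langle T,u\rangle=\langle S(T),Nu\rangle$), Lemma \ref{support} (so $\supp_\fL(S(T))\subset\supp(T)^*\subset E^*$), the observation that $Nu$ vanishes on $E^*$, and the definition of operator synthesis. Your extra care about identifying the continuous representative of $Nu$ up to marginally null sets is a reasonable refinement of a point the paper passes over silently, but it is not a different route.
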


\begin{proof}
Let $T\in A_\om(G)^*$, $u\in A_\om (G)$ such that $\text{supp
}T\subset E\subset \text{null} u$. Then $Nu=0$ on $E^*$ and by
Lemma~\ref{support} $\text{supp}_\fL(S(T)\subset E^*$. The statement
now follows from the equality $\langle T,u\rangle=\langle
S(T),Nu\rangle$ which is due to  Lemma~\ref{nu}.
\end{proof}

\begin{corollary}
Let $G$ be a compact Lie group and let $D=\{(x,x):x\in G\}$. If
$\om$ is a symmetric polynomial weight on $\wh G$ such that $\om\geq
C\om_S^\alpha$ for some $C>0$, $\alpha\geq 1$ then $D$ is not of
operator synthesis with respect to the weight $\om$.

\end{corollary}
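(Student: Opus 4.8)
The plan is to recognize $D$ as $\{e\}^*$ and then feed this into the machinery already assembled, namely Proposition~\ref{opsynth} together with Corollary~\ref{onepoint}(ii). Indeed, by definition $\{e\}^*=\{(s,t)\in G\times G: st^{-1}\in\{e\}\}=\{(s,t):s=t\}=D$, so the statement is exactly the assertion that $\{e\}^*$ fails operator synthesis under the hypothesis $\omega\geq C\omega_S^\alpha$, $\alpha\geq 1$.

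First I would invoke Proposition~\ref{opsynth} in contrapositive form: if $E\subset G$ is closed and $E$ is \emph{not} a set of spectral synthesis for $A_\omega(G)$, then $E^*$ is \emph{not} a set of operator synthesis with respect to $\omega$. Applied to $E=\{e\}$, this reduces the claim to showing that the one-point set $\{e\}$ is not a set of spectral synthesis for $A_\omega(G)$ under the stated growth hypothesis on $\omega$.

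That last fact is precisely Corollary~\ref{onepoint}(ii): since $\omega$ is a symmetric weight on $\widehat{G}$ with $\omega\geq C\omega_S^\alpha$ for some $C>0$ and $\alpha\geq 1$, no one-point set — in particular $\{e\}$ — is a set of spectral synthesis for $A_\omega(G)$. (Recall the mechanism there: the Casimir element produces bounded point derivations $X\in\g\subset A_\omega(G)^*$ at $e$, which kill $I_\omega(\{e\})^2$ but not $J_{\mathcal D}(\{e\})\subset I_\omega(\{e\})$, so $\overline{I_\omega(\{e\})^2}\subsetneq I_\omega(\{e\})$.) Combining with the previous paragraph gives that $D=\{e\}^*$ is not a set of operator synthesis with respect to $\omega$, as desired.

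There is no real obstacle here: the corollary is an immediate consequence of two results proved above, and the only point requiring care is the trivial verification that $D$ really is $\{e\}^*$ under the convention $E^*=\{(s,t):st^{-1}\in E\}$, so that Proposition~\ref{opsynth} applies verbatim.
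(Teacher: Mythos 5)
Your proof is correct and follows exactly the paper's own argument: identify $D=\{e\}^*$, apply Corollary~\ref{onepoint}(ii) to conclude that $\{e\}$ fails spectral synthesis, and then use the contrapositive of Proposition~\ref{opsynth}. Nothing to add.
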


\begin{proof}
If $E=\{e\}$ then $E^*=D$. The statement now follows from
Corollary~\ref{onepoint} and Proposition~\ref{opsynth}.
\end{proof}
\vspace{0.5cm}


\medskip

Addresses:
\linebreak
 {\sc
Laboratoire LMAM, UMR 7122, D\'epartement de Math\' ematiques, Universit\' e
Paul Verlaine-Metz, Ile du Saulcy, F-57045 Metz, France.\\
\linebreak Department of Pure Mathematics, University of Waterloo,
Waterloo, ON\quad N2L 3G1, Canada.\\
\linebreak Deprtment of mathematical Sciences, Chalmers University of Technology and the University of Gothenburg, SE-41296 Gothenburg, Sweden.}

\medskip
Email-adresses:
\linebreak
ludwig@univ-metz.fr
\linebreak nspronk@uwaterloo.ca
\linebreak
turowska@chalmers.se

\end{document}